\documentclass{article}

\usepackage{fullpage}
\usepackage{graphicx} 
\usepackage{amsmath,amssymb,amsthm}
\usepackage{mathrsfs}
\usepackage{xcolor}
\usepackage{indentfirst}
\usepackage{hyperref}
\usepackage{booktabs}
\usepackage{float}
\usepackage{appendix}
\usepackage{titlesec}
\usepackage{chngcntr}

\newtheorem{theorem}{Theorem}
\newtheorem{lemma}{Lemma}
\newtheorem{proposition}{Proposition}
\newtheorem{definition}{Definition}
\newtheorem{remark}{Remark}[section]
\newtheorem{corollary}{Corollary}

\numberwithin{equation}{section}

\newcommand{\vertiii}[1]{{\left\vert\kern-0.25ex\left\vert\kern-0.25ex\left\vert #1 
    \right\vert\kern-0.25ex\right\vert\kern-0.25ex\right\vert}}
\DeclareMathOperator{\codim}{\text{codim}}

\title{Existence of solutions to Dirichlet boundary value problems of the stationary relativistic Boltzmann equation}

\author{Yi Wang\thanks{School of Mathematics and Statistics, Ningbo University, Ningbo, China.} \and Li Li$^{*,}$\footnote{Corresponding author: \texttt{lili2@nbu.edu.cn}} \and Zaihong Jiang\thanks{School of Mathematical Sciences, Zhejiang Normal University, Jinhua, China}}
\date{\today}

\begin{document}

\maketitle

\begin{abstract}
    In this paper, we study the Dirichlet boundary value problem of steady-state relativistic Boltzmann equation in half-line with hard potential model, given the data for the outgoing particles at the boundary and a relativistic global Maxwellian with nonzero macroscopic velocities at the far field. We first explicitly address the sound speed for the relativistic Maxwellian in the far field, according to the eigenvalues of an operator based on macro-micro decomposition. Then we demonstrate that the solvability of the problem varies with the Mach number $\mathcal{M}^\infty$. If $\mathcal{M}^\infty<-1$, a unique solution exists connecting the Dirichlet data and the far field Maxwellian when the boundary data is sufficiently close to the Maxwellian. If $\mathcal{M}^\infty>-1$, such a solution exists only if the outgoing boundary data is small and satisfies certain solvability conditions. The proof is based on the macro-micro decomposition of solutions combined with an artificial damping term. A singular in velocity (at $p_1=0$ and $|p|\gg 1$) and spatially exponential decay weight is chosen to carry out the energy estimates. The result extends the previous work [Ukai, Yang, Yu, Comm. Math. Phys. 236, 373-393, 2003] to the relativistic problem. 
    
    \textbf{Keywords: }Relativistic Boltzmann equation, boundary value problem, relativistic Maxwellian, Lorentz transformation, sound speed 
\end{abstract}

\section{Introduction}

The relativistic Boltzmann equation reveals the statistical behavior of high-speed particles, which is crucial for exploring relativistic fluid dynamics and studying the dynamical behavior of particles in extreme physical environments. The relativistic Boltzmann equation is
\begin{equation}\label{eq:RBE}
    \partial_t F + \hat{p} \cdot \nabla_x F = Q(F,F),
\end{equation}
where $F=F(t, x, p)$ represents a distribution function for particles at time $t > 0$, position $x \in \Omega$ with velocity $p \in \mathbb{R}^3$. Let $\mathfrak{c}$ denote the speed of light. The energy of a relativistic particle with velocity $p$ is defined by $p_0:=\sqrt{\mathfrak{c}^2+|p|^2}$ and the normalized particle velocity $\hat{p}$ is given by
\[
    \hat{p} := \mathfrak{c} \frac{p}{p_0} \equiv \frac{p}{\sqrt{1+\frac{|p|^2}{\mathfrak{c}^2}}}.
\]
The relativistic Boltzmann collision operator $Q(F, F)$ is given, for example in \cite{Boisseau:vanLeeuwen,deGroot:vanLeeuwen:vanWeert}, by 
\begin{equation}\label{eq:Q}
    Q( F_1, F_2 ) = \frac{\mathfrak{c}}{2} \frac{1}{p_0} \int_{\mathbb{R}^3} \int_{\mathbb{R}^3} \int_{\mathbb{R}^3} W( p, q \mid p',q' ) \big( F_1 ( p' ) F_2 ( q' ) - F_1( p ) F_2 ( q ) \big) \frac{dp'}{p_{0}'} \frac{dq'}{q_{0}'} \frac{dq}{q_0}. 
\end{equation}
The translation rate $W(p,q\mid p',q')$ can be expressed in the following form:
\[
    W ( p, q \mid p',q' )=s\sigma ( g, \theta ) \delta ^{( 4 )}( P+Q-P'-Q' ),
\]
where $\sigma ( g,\theta )$ is the scattering kernel measuring the interactions between particles, and Dirac function $\delta ^{( 4 )}$ is the delta function of four variables. 
The relativistic velocities are represented by four-vectors $P, Q, P'$ and $Q'$, where $p, q\in \mathbb{R}^3$ are the pre-collisional velocity, $(p_0,q_0)$ are the pre-collisional energy, $P$ and $Q$ are given by 
\[
    P := 
    \begin{pmatrix}
		p_0 \\ p
	\end{pmatrix} = 
    \begin{pmatrix}
		\sqrt{\mathfrak{c}^2+|p|^2} \\ p
	\end{pmatrix}, \qquad 
    Q := 
    \begin{pmatrix}
		q_0 \\ q
	\end{pmatrix} = 
    \begin{pmatrix}
		\sqrt{\mathfrak{c}^2+|q|^2} \\ q
	\end{pmatrix}. 
\]
The post-collisional velocity $p',q'\in \mathbb{R}^3$ and energy $(p_0',q_0')$ satisfy:
\begin{equation}\label{eq:conservation:laws}
    P + Q = P' + Q', \qquad P' = (p'_0,p')^T, \qquad Q' = (q'_0,q')^T. 
\end{equation}
We will explain $s,g$ and $\theta$ in section \ref{sub:notations}.

Four common boundary conditions are the Dirichlet (perfect absorption), specular reflection, reverse reflection, and diffuse reflection boundary conditions. 
The boundary value problem of the classical Boltzmann equation (non-relativistic Boltzmann equation) has been the subject of extensive study by numerous researchers, resulting in a wealth of significant findings.
The studies of the existence of solutions to nonlinear boundary value problems of the Boltzmann equation with Dirichlet boundary condition under the hard sphere model, hard potential model, and soft potential model are presented in \cite{Ukai:Yang:Yu}, \cite{Chen:Liu:Yang}, \cite{Wang:Yang:Yang} respectively.
Later, Tian studied the existence of solutions to the Boltzmann equation under the hard sphere model affected by reverse, diffuse and specular reflection boundary conditions in \cite{Tian:reverse} and \cite{Tian:physical}, respectively. 
Similar to the result in \cite{Ukai:Yang:Yu}, Tian proved that the existence of a solution to the Dirichlet boundary value problem depends on the Mach number of the far field Maxwellian. However, for the case of reverse and specular reflection boundary conditions, the method is only effective for positive Mach numbers. 
The existence of solution to the Boltzmann equation with angular cutoff was proved for hard potential model in \cite{Sun:Tian:hardpotential} and for soft potential model in 
\cite{Sun:Tian:softpotential} under the mixed boundary conditions, which is a linear combination of the Dirichlet boundary condition and weak diffuse reflection boundary condition at the wall surface. 
On the basis of the existence of the solution, the studies on its stability can be referred to \cite{Tian:Sun:stability:diffusive,Ukai:Yang:Yu:stability,Wang:Yang:Yang:stability:hard,Yang:Zhao:stability:specular,Yang:stability:soft}.

The relativistic Boltzmann equation is an important equation in relativistic kinetic theory. In \cite{Marek:Maria}, the study focused on the linearized relativistic Boltzmann equation in \( L^2(\mathbb{R}, p) \) and demonstrated that the collision operator can be expressed as a multiplication operator plus a compact perturbation, while also establishing the existence and uniqueness of solutions. 
It was proved in \cite{Glassey:Strauss} that, under appropriate conditions on the scattering kernel, there exists a unique global solution when initial data is periodic in space variable and close to equilibrium. Moreover, the relativistic Maxwellian distribution is asymptotically stable. 
In \cite{Glassey:Cauchy}, the study focused on the Cauchy problem for the relativistic Boltzmann equation with small data, and global "mild" solutions are obtained for an appropriate class of scattering cross sections. 
The local-in-time hydrodynamic limit of the relativistic Boltzmann equation was established in \cite{Speck:Strain}. 
The global existence and uniqueness of solution to the relativistic Boltzmann equation with soft potentials on $\mathbb{T}^3_x$, which decays with any polynomial rate towards a steady-state relativistic Maxwellian, was proved in \cite{Strain:stability}.
Later, Duan and Yu proved in \cite{Duan:Yu} the existence of global-in-time solutions that approach to equilibrium states at an exponential decay rate, while extending previous results on the classical Boltzmann equation in \cite{Caflisch:I:soft,Caflisch:II:soft,Strain:Guo:Exponential} and improving the result on almost exponential time-decay in \cite{Strain:stability}. 
The global existence, uniqueness, positivity and optimal time convergence rate of solution to the relativistic Boltzmann equation with small perturbed initial data in the whole space $\mathbb{R}^3_x$ was addressed in \cite{Strain:Zhu}, based on a general soft potential assumption proposed in \cite{Marek:Maria}. 
The study in \cite{Jang} and \cite{Jang:Strain} established the global-in-time existence, uniqueness and asymptotic stability of solutions to the relativistic Boltzmann equation nearby the relativistic Maxwellian without angular cutoff, and resolved an open question in relativistic kinetic theory. 
The global existence and uniqueness of mild solution to the relativistic Boltzmann equation with large amplitude initial data in both whole space and torus was proved in \cite{Wang}, under some additional smallness conditions on $L_x^1 L_p^\infty$-norm, as well as defect mass, energy and entropy. 
A longstanding question regarding the Jacobian determinant for the relativistic Boltzmann equation in the center-of-momentum coordinates was addressed in \cite{Chapman:Jang:Strain}. 
More background descriptions on the relativistic Kinetic theory can refer to \cite{Cercignani:Kremer,deGroot:vanLeeuwen:vanWeert,Glassey,Stewart}. 

Studying the boundary value problem of the relativistic Boltzmann equation can help reveal the behavior of particles near the boundary and provide guidance for engineering applications. In this paper, 
we consider the following steady-state problem in half line with Dirichlet boundary condition under the hard potential model:
\begin{equation}\label{eq:BVP}
    \left\{ 
    \begin{array}{ll}
        \hat{p}_1 \partial_x F = Q( F, F ), \qquad & x\in \mathbb{R}^+, \quad p\in \mathbb{R}^3,\\
        F|_{x=0} = F_0( p ), & p_1>0, \quad ( p_2, p_3 ) \in \mathbb{R}^2,\\
        \displaystyle \lim_{x\to +\infty} F = J_{\infty}( p ),  & p\in \mathbb{R}^3,
    \end{array} \right.
\end{equation}
where $F(x,p)$ is a distribution function for fast moving particles at position $x\in \mathbb{R}^+$ with velocity $p \in \mathbb{R}^3$, $F_0$ is the boundary condition assigned for the outgoing particles at $x=0$, while $J_\infty$ is the far field relativistic Maxwellian distribution. 

We consider the dimensionless hard potential model, see \cite{Glassey:Strauss} for example, with $\sigma$ satisfying
\begin{equation}\label{eq:sigma}
    c_1 \frac{(g/\mathfrak{c})^{a+1}}{\sqrt{s}/\mathfrak{c}} \sin^{\varsigma} \theta < \sigma(g,\theta) <  c_2 \big( (g/\mathfrak{c})^a + (g/\mathfrak{c})^{-b} \big) \sin^{\varsigma} \theta,
\end{equation}
where $c_1$, $c_2$ are positive constants, 
\begin{equation}\label{eq:ab}
    0 \le b < \frac{1}{2}, \quad 0 \le a < 2-2b, 
\end{equation}
and either $\varsigma \ge 0$ or
\begin{equation}\label{eq:gamma}
    | \varsigma | < \min \Big\{ 2-a, \frac{1}{2} - b, \frac{1}{3}(2-2b-a) \Big\}. 
\end{equation}

The steady solutions of \eqref{eq:RBE} are the well-known J\"{u}ttner solutions, also known as the relativistic Maxwellian states, as referenced for example in \cite{Speck:Strain}. Given any functions $\rho(t,x)$, $T(t,x)$ and $U(t,x)=(u_0,u)$ with $\rho>0$, $T>0$, $u\in\mathbb{R}^3$, $u_0 := \sqrt{\mathfrak{c}^2 + |u|^2}$, we define the corresponding relativistic Maxwellian state $J_{[\rho,u,T]}$ as follows:
\[
    J_{[\rho, u, T]} ( p ) := \frac{\rho}{ 4 \pi m^2 k_B \mathfrak{c} T K_2( \frac{m \mathfrak{c}^2}{k_B T} ) } e^{-\frac{P\cdot U}{k_B T} },
\]
where $m$ is the rest mass of particles, $k_B$ is the Boltzmann constant and $K_\alpha(z)$ is the modified Bessel function defined by
\[
    K_\alpha(x) := \int_{0}^{\infty} e^{-x \cosh t }\cosh\alpha t dt. 
\]

Without loss of generality, we assume that $k_B = m = 1$. Define a dimensionless variable $z$ by
\[
    z:=\frac{\mathfrak{c}^2}{T}.
\]
Then 
\[
    J_{[ \rho, u,T ]}(p) :=\frac{\rho z}{4\pi \mathfrak{c}^3 K_2(z)} e^{ - \frac{z P \cdot U}{\mathfrak{c}^2} }.
\]
Note that $(P\cdot U)/ \mathfrak{c}^2$ is dimensionless. The expression of $J$ is meaningful in the sense that $\int J dp$ has the same unit as density $\rho$.

Let $\rho_\infty=1$. We take the far field distribution $J_\infty$ as
\begin{equation}\label{eq:J:infty}
    J_\infty = J_{[1,u_\infty,T_\infty]}(p) := \frac{1}{4\pi \mathfrak{c} T_\infty K_2(\frac{\mathfrak{c}^2}{T_\infty})} e^{-\frac{p_0u_{\infty,0} - p\cdot u_\infty}{T_\infty}},
\end{equation}
where $T_{\infty}>0$, $u_{\infty}=(u_{\infty,1},u_{\infty,2},u_{\infty,3})\in \mathbb {R}^3$, $u_{\infty,0} = \sqrt{\mathfrak{c}^2+|u_\infty|^2}$ are constants. We assume that $u_{\infty,2}=u_{\infty,3}=0$. 

By Lemma \ref{lem:Mach:number} and Remark \ref{rmk:Mach:number}, we define the sound speed $c_\infty$ and Mach number $\mathcal{M}^\infty$ of the far field equilibrium state by
\begin{equation}\label{eq:Mach:number}
    {c}_\infty := \sqrt{T_\infty \cdot \frac{z a_2(z) - a_1(z) }{z a_3(z)} },\qquad \mathcal{M}^\infty:=\frac{u_{\infty,1}}{c_\infty},
\end{equation}
where $z := \frac{\mathfrak{c}^2}{T_\infty}$, $a_i(z)$, $i=1,2,3$, are defined below in \eqref{eq:a1}-\eqref{eq:a3}. Note that the flow at infinity is incoming (resp. outgoing) if $\mathcal{M}^\infty<0$ (resp. $> 0$) and supersonic (resp. subsonic) if $|\mathcal{M}^\infty| > 1$ (resp. $< 1$).

\begin{remark}
    The relativistic sound speed $\hat{c}_\infty := \frac{c_\infty}{c_{\infty,0}}\mathfrak{c}=\frac{c_\infty}{\sqrt{\mathfrak{c}^2+c_\infty^2}}\mathfrak{c}$ is exactly the sound speed $c_s$ as expressed in equation (69) of Chapter I in \cite{Guichelaar}. From the definitions of $a_i$, $i=1,2,3$, in \eqref{eq:a1}-\eqref{eq:a3}, it is easy to check that 
    \[
        \lim_{z\to +\infty} \frac{z a_2(z) - a_1(z) }{z a_3(z)} = \frac{5}{3}, 
    \]
    which is consistent with the fact that the classical Boltzmann equation, in which case the sound speed is $\sqrt{\frac{5}{3}T}$, can be viewed as the limit of the relativistic Boltzmann equation as $\mathfrak{c}$ goes to infinity. Moreover, the relativistic sound speed conforms in the ultrarelativistic limit that
    \[
        \lim_{z\to 0} \frac{\hat{c}_\infty}{\mathfrak{c}} = \frac{1}{\sqrt{3}}. 
    \] 
\end{remark}

The number of solvability conditions for problem \eqref{eq:BVP} depends on $\mathcal{M}^\infty$. Define $n^+$ to be the number of solvability conditions:
\[
    n^+ := \left\{ \begin{array}{l}
        0,\quad \mathcal{M}^\infty\in (-\infty,-1),\\
        1,\quad \mathcal{M}^\infty\in (-1,0),\\
        4,\quad \mathcal{M}^\infty\in (0,1),\\
        5,\quad \mathcal{M}^\infty\in (1,\infty). \\
    \end{array} \right.
\]

We introduce for $\beta \in \mathbb{R}$ the weight function
\begin{equation}\label{eq:weight}
    W_\beta(p) := p_0^{-\beta} J_{[1,u_\infty,T_\infty]}^{\frac{1}{2}}(p), 
\end{equation}
and let $\eta$ be defined as
\begin{equation}\label{eq:eta}
    \eta := 1-\frac{1}{2}(3 | \varsigma | + a + 2b ) \in (0,1],
\end{equation}
where $\varsigma, a$ and $b$ are the parameters given above. 

The main theorem of this paper is as follows. 

\begin{theorem}\label{thm:main:result}
    Suppose that the scattering kernel $\sigma$ satisfies \eqref{eq:sigma}, \eqref{eq:ab} and \eqref{eq:gamma}, $\eta$ defined in \eqref{eq:eta} belongs to $[\frac{2}{3}, 1]$, $\beta > 2$ and $\mathcal{M}^\infty\ne 0,\pm1$. Then there exist positive numbers $\epsilon_0$, $\tau$, $c_0$ and a $C^1$ map
    \[
        \Psi:L^2(\mathbb{R}^3_+)\cap L_{p,\beta}^\infty(\mathbb{R}^3_+)\rightarrow\mathbb{R}_+^{n^+},\quad \Psi(0)=0, 
    \]
    such that the following holds. 

    {\rm (i)} For any $F_0$ satisfying
            
    \begin{equation}\label{eq:main:con1}
        |F_0(p)-J_\infty(p)|\le \epsilon_0 W_{\beta}(p),\quad p\in\mathbb{R}^3, p_1>0,
    \end{equation}
    and
    \begin{equation}\label{eq:main:con2}
        \Psi(W_0^{-1}(F_0-J_\infty))=0,
    \end{equation}
    problem \eqref{eq:BVP} has a unique solution $F$ in the class 
    \[
        |F(x,p)-J_\infty(p)|\le c_0 e^{-\tau x}W_{\beta}(p),\quad x\in \mathbb{R}^+,p\in \mathbb{R}^3.
    \]

    {\rm (ii)} The set of $F_0$ satisfying \eqref{eq:main:con1} and \eqref{eq:main:con2} forms a (local) $C^1$ manifold of codimension $n^+$.
\end{theorem}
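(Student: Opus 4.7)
The plan is to reduce the problem to a perturbation about $J_\infty$, solve the resulting linear half-line equation via macro-micro decomposition plus an artificial-damping regularisation, and close the nonlinear argument by contraction in a weighted norm. Writing $F = J_\infty + W_0 f = J_\infty + \sqrt{J_\infty}\,f$, the boundary datum becomes $f_0 := W_0^{-1}(F_0-J_\infty)$ with $|f_0(p)| \le \epsilon_0 p_0^{-\beta}$ by \eqref{eq:main:con1}, and \eqref{eq:BVP} takes the form
\[
    \hat p_1 \partial_x f + \mathcal L f = \Gamma(f,f), \qquad f(0,\cdot)\big|_{p_1>0} = f_0, \qquad f(x,\cdot)\to 0 \text{ as } x\to+\infty,
\]
where $\mathcal L$ is the collision operator linearised at $J_\infty$ (selfadjoint, nonnegative, with a five-dimensional kernel spanned by the collision invariants $\{1,p,p_0\}\cdot\sqrt{J_\infty}/W_0$) and $\Gamma$ is the associated symmetric bilinear term.

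Next I would analyse the linear half-line problem. Projecting onto $\ker\mathcal L$ yields a $5\times 5$ ODE system whose transport matrix, built from the $\hat p_1$-moments of the collision invariants, has eigenvalues expressible in terms of $u_{\infty,1}$, $T_\infty$ and the $a_i(z)$ via Lemma~\ref{lem:Mach:number} and \eqref{eq:Mach:number}; the number $n^+$ of strictly positive characteristic speeds (modes inward at $x=+\infty$) is exactly the codimension listed in the theorem, with transitions at $\mathcal M^\infty = -1,0,1$, and the assumption $\mathcal M^\infty\ne 0,\pm 1$ keeps us away from the degenerate cases. To sidestep the zero-eigenvalue singularity I would first solve, for each $\alpha > 0$, the damped linear problem
\[
    \hat p_1 \partial_x f + \mathcal L f + \alpha f = g
\]
by iteration, and establish estimates uniform in $\alpha$ in the norm $\|f\|_* := \sup_{x\ge 0,\,p\in\mathbb R^3} e^{\tau x} W_\beta(p)^{-1}|f(x,p)|$. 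The key is a weighted energy estimate against an additional singular-in-$p_1$ factor, which simultaneously controls the outgoing trace at $x=0$ and compensates the degeneracy of $\hat p_1 \partial_x$ at the grazing set $\{p_1=0\}$; this is where $\eta \in [2/3,1]$ from \eqref{eq:eta} enters as the coercivity exponent of $\mathcal L$ in the hard-potential regime \eqref{eq:sigma}--\eqref{eq:gamma}. Passing $\alpha \downarrow 0$ then produces a bounded linear solution operator defined on the subspace transverse to the $n^+$ incoming macroscopic modes at infinity.

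The nonlinear step would be closed by a contraction combined with a Lyapunov--Schmidt splitting. Since $\Gamma$ loses at worst the hard-potential weight $(g/\mathfrak c)^a$, the hypothesis $\beta > 2$ suffices to give $\|\Gamma(f_1,f_2)\|_* \lesssim \|f_1\|_*\|f_2\|_*$, so a Picard iteration on the projected equation converges for $\epsilon_0$ small and produces the unique $f$ in the class $\|f\|_* \le c_0 \epsilon_0$, which translates back to the pointwise bound stated in part~(i). The $n^+$ orthogonality projections onto the incoming macroscopic modes define a map $\Psi : L^2(\mathbb R^3_+)\cap L^\infty_{p,\beta}(\mathbb R^3_+) \to \mathbb R_+^{n^+}$ that depends $C^1$ on the data with $\Psi(0)=0$; \eqref{eq:main:con2} is then exactly the solvability condition for the un-projected problem. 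Statement~(ii) follows from the implicit function theorem: because $D\Psi(0)$ is the surjective projection onto $\mathbb R^{n^+}$, the zero set of $\Psi$ is locally a $C^1$ submanifold of codimension $n^+$.

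The main obstacle I expect is the uniform-in-$\alpha$ weighted estimate for the damped linear problem, because in the relativistic setting $\hat p_1 = p_1/\sqrt{1+|p|^2/\mathfrak c^2}$ degenerates at both $p_1 = 0$ and $|p| \to \infty$, so the singular-in-$p_1$ and polynomially-decaying-in-$p_0$ weight has to be tuned very carefully to dominate simultaneously the outgoing boundary trace, the hard-potential loss $(g/\mathfrak c)^a$, the singular loss $(g/\mathfrak c)^{-b}$, and to remain compatible with the macro-micro projection used in the spectral step. In the classical Ukai--Yang--Yu framework $p_1$ does not decay at large $|p|$, which makes this balance simpler; correctly transporting their argument across this relativistic obstruction is the technical heart of the proof.
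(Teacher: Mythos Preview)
Your overall architecture—linearise about $J_\infty$, regularise, derive weighted $L^\infty$ estimates with a weight singular at $p_1=0$ and decaying as $|p|\to\infty$, close by contraction, and extract $n^+$ solvability conditions via the implicit function theorem—is that of the paper, and your identification of the main technical obstacle (the double degeneracy of $\hat p_1$, which is what forces the restriction $\eta\in[2/3,1]$) is correct.

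The gap is in the choice of regularisation. You add the isotropic penalty $+\alpha f$ and propose to pass $\alpha\downarrow 0$ with uniform estimates. The paper instead adds the \emph{macroscopic} damping $-\gamma P_0^+\hat p_1 f$, where $P_0^+$ projects onto the positive eigenspace of $B=P_0\hat p_1 P_0$, and never sends $\gamma\to 0$. The reason this matters is that $L$ and $\Gamma$ are purely microscopic, so projecting the damped equation onto the range of $P_0^+$ yields the closed ODE $\partial_x(P_0^+\hat p_1 f)=-\gamma\,P_0^+\hat p_1 f$; hence whenever the trace satisfies $P_0^+\hat p_1 f|_{x=0}=0$ the damping term vanishes \emph{identically for all $x$}, and the damped solution is already a solution of the original problem. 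The map $\Psi$ is then simply $a_0\mapsto P_0^+\hat p_1\,\mathbb J^\gamma(a_0)$, and its Fr\'echet derivative at $0$ is shown to have rank $n^+$ by testing against the eigenvectors $\Xi_i$ of $B$. Your penalty $+\alpha f$ has no such cancellation: its macroscopic projection $\alpha P_0 f$ does not decouple into a closed ODE for any finite-dimensional quantity, so there is no codimension-$n^+$ set of data on which it drops out. You are then forced into genuine uniform-in-$\alpha$ estimates, but the only macroscopic coercivity you gain is $\alpha\|P_0 f\|^2$; with the exponential weight $e^{\tau x}$ in place, positivity on the positive $B$-modes requires $\alpha>\tau\max_i\lambda_i$, so you cannot keep a fixed decay rate $\tau>0$ while sending $\alpha\to 0$. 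The targeted damping $-\gamma P_0^+\hat p_1$ is precisely the device that decouples these two scales and makes the limit step unnecessary.
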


\begin{remark}
    The condition $\beta>2$ in the above theorem can be weakened to $\beta > \frac{3+2(1-\eta) + \frac{a}{2}}{2}$ from the proof of Theorem \ref{thm:non:exist}, where $\eta\in[\frac{2}{3},1]$, \eqref{eq:sigma}, \eqref{eq:ab} and \eqref{eq:gamma} implies that $0\le a\le \frac{2}{3}$ and $\frac{3+2(1-\eta) + \frac{a}{2}}{2} \le 2$. In the particular case that $a = b = \varsigma = 0$, $\eta = 1$, the theorem holds for any $\beta > 3/2$. 
\end{remark}

In this paper, we explicitly address for the relativistic Maxwellian in the far field the sound speed in \eqref{eq:Mach:number}, which is derived from the eigenvalues of the operator $P_0 \hat{p}_1 P_0$ corresponding to the relativistic Maxwellian with nonzero macroscopic velocities. 
The primary challenge of this paper arises from two key factors. First, the inherent complexity of the collision kernel in the relativistic Boltzmann equation, as well as the moments of the relativistic Maxwellian, presents substantial difficulties compared to the non-relativistic case. Second, obtaining $L^\infty$ estimates for the solutions requires the careful selection of a weight function that is singular in velocity (at $p_1=0$ and $|p| \gg 1$) and exhibits exponential spatial decay. The choice of this weight function further imposes a constraint on the parameter $\eta\in[\frac{2}{3},1]$. 
To better monitor the dimensionless quantities in the calculation, we consistently include the speed of light $\mathfrak{c}$ in all computations.

The organization of this paper is as follows.
In section \ref{sec:notations:prop}, we denote some notations and prove that the operator $L$ and $\Gamma$ corresponding to the general (i.e. with nonzero bulk velocity) relativistic Maxwellian $J_\infty$ have the same properties as the ones corresponding to the standard (i.e. with vanishing bulk velocity) relativistic Maxwellian $J_\infty^0$ by using Lorentz transformations. And we also present some properties of $P_0\hat{p}_1P_0$. In section \ref{sec:elp}, we prove the existence and uniqueness of solution for the linearized relativistic Boltzmann equation with damping term by energy estimate and Riesz representation theorem. In section \ref{sec:we}, we give the weighted estimates of the solution for the linear problem with damping term. In section \ref{sec:enp}, we prove the existence of the solution for the nonlinear equation with damping term. In section \ref{sec:without:damping}, we discuss the existence of solutions to the problem without damping under suitable solvability conditions.

\section{Some notations and properties}\label{sec:notations:prop}

\subsection{Notations in the relativistic system}\label{sub:notations}

In this paper, we use $\| \cdot \|_{p}$ and $\| \cdot \|_{x,p}$ to denote the standard norm in $L^2(\mathbb{R}^3)$ and $L^2\big( (0,+\infty)\times \mathbb{R}^3 \big)$, respectively. Use $\| \cdot \|$ for convenience when the independent variables are obvious. Let $\| \cdot \|_{\pm}$ to denote the norm in $L^2(\mathbb{R}^3_{\pm})$ with $\mathbb{R}^3_{+} := \{p\in \mathbb{R}^3 \mid p_1>0\}$, $\mathbb{R}^3_{-} := \{p\in \mathbb{R}^3 \mid p_1<0\}$. 
Use $( \cdot, \cdot )$ to denote the inner product in $L^2_{x,p}( (0, \infty)\times \mathbb{R}^3)$ and $\langle \cdot, \cdot \rangle_\pm$ to denote the inner product in $L_p^2(\mathbb R^3_\pm)$. 

Use $\| \cdot \|_{L^\infty_{p,\beta}}$ and $\| \cdot \|_{L^\infty_{x,p,\beta}}$ to denote for any $\beta\in\mathbb{R}$ the weighted norm
\[
    \| f \|_{L^\infty_{p,\beta}} := \| p_{0}^{\beta} f \|_{L_{p}^\infty} = \underset{p \in \mathbb{R}^3}{\sup} | p_{0}^{\beta} f( p )|, \qquad 
    \| g \|_{L^\infty_{x,p,\beta}} := \| p_{0}^{\beta} g \|_{L_{x,p}^\infty} = \underset{x \in \mathbb{R}^+, p \in \mathbb{R}^3}{\sup} | p_{0}^{\beta} g( x,p )|.
\]
Let $\| \cdot \|_{L^\infty_{p,+}}$ and $\| \cdot \|_{L^\infty_{p,\beta}(\mathbb{R}^3_+)}$ represent the (weighted) norms in half space: 
\[
    \| f \|_{L^\infty_{p,+}} := \| f \|_{L_{p}^\infty(\mathbb{R}^3_+)} = \underset{p \in \mathbb{R}^3, p_1>0}{\sup} | f( p )|,
\]
\[
    \| f \|_{L^\infty_{p,\beta}(\mathbb{R}^3_+)} := \| p_{0}^{\beta} f \|_{L_{p}^\infty(\mathbb{R}^3_+)} = \underset{p \in \mathbb{R}^3, p_1>0}{\sup} | p_{0}^{\beta} f( p )|.
\]

Throughout this paper, we use $c$ to represent a constant depending on $u_\infty,T_\infty, \mathfrak{c}$ and the parameters in \eqref{eq:sigma}, and the specific value it represents may vary from one line to another.

For $p,q\in\mathbb{R}^3$, we let $p\cdot q := \sum_{i=1}^{3} p_i q_i$. Denote 
\begin{equation}\label{eq:D}
    D := 
    \begin{pmatrix}
        1 & 0 & 0 & 0 \\
	0 & -1 & 0 & 0 \\
	0 & 0 & -1 & 0 \\
	0 & 0 & 0 & -1 \\
    \end{pmatrix}.
\end{equation}
Define the Lorentz inner product of any two four-vectors $P$ and $Q$ as
\[
    P \cdot Q := P D Q = p_0 q_0 - \sum_{i=1}^{3} p_i q_i.
\]

Let $s$ be the square of the energy given by
\begin{equation}\label{eq:s}
    s(P,Q) := |P+Q|^2 = (P+Q) \cdot (P+Q) = 2 ( P \cdot Q + \mathfrak{c}^2 ). 
\end{equation}
The relative momentum $g \ge 0$ is defined as
\begin{equation}\label{eq:g}
    2 g(P,Q) := |P-Q| = \sqrt{-(P-Q)\cdot(P-Q)}=\sqrt{2(P\cdot Q-\mathfrak{c}^2)}.
\end{equation}

A direct calculation shows that 
\begin{equation}\label{eq:sg}
    s = 4 g^2 + 4 \mathfrak{c}^2.
\end{equation}
It is easy to see that 
\[
    \sqrt{s} \le 2(\mathfrak{c}+g) \le \sqrt{2s}.
\]
Equations \eqref{eq:conservation:laws}, \eqref{eq:s} and \eqref{eq:sg} together lead to the conclusion that both $s$ and $g$ are collision invariants, that is to say, $s(P,Q)=s(P',Q')$ and $g(P,Q)=g(P',Q')$.
	
In the center-of-momentum system, the post-collisional velocities in (\ref{eq:Q}) can be expressed for some $\omega\in \mathbb{S}^2$ as
\begin{equation}\label{eq:post:velocity:boost}
    \left\{ 
    \begin{array}{l}
        \displaystyle p' = \frac{1}{2} ( p + q ) + g(P,Q) \cdot \big( \omega + ( \tilde{\gamma}-1 ) ( p+q ) \frac{ ( p + q ) \cdot \omega}{ | p + q |^2} \big), \\
        \displaystyle q' = \frac{1}{2} ( p + q ) - g(P,Q) \cdot \big( \omega + ( \tilde{\gamma}-1 ) ( p+q ) \frac{ ( p+q ) \cdot \omega}{ | p+q |^2} \big),
    \end{array} 
    \right.
\end{equation}
where $\tilde{\gamma} := \frac{(p_0+q_0)}{\sqrt{s}} $. The post-collisional energies are given by
\begin{equation}\label{eq:post:energy:boost}
    \left\{ 
	\begin{array}{l}
        \displaystyle p_0' = \frac{1}{2} ( p_0+q_0 ) +\frac{g}{\sqrt{s}}(p+q)\cdot \omega,\\
        \displaystyle q_0' = \frac{1}{2}( p_0+q_0 ) -\frac{g}{\sqrt{s}}(p+q)\cdot \omega.
	\end{array} 
    \right.    
\end{equation}

The scattering angle $\theta$ is defined by
\begin{equation}\label{eq:scattering:angle}
	\cos \theta := -\frac{(P-Q)\cdot(P'-Q')}{4g^2}.
\end{equation}
Note that the conservation of momentum and energy in \eqref{eq:conservation:laws} ensures that the right-hand side of \eqref{eq:scattering:angle} is less than one, hence \eqref{eq:scattering:angle} gives a good definition of $\theta$; See \cite{Glassey}. 
	
Using the Lorentz transformations described in \cite{deGroot:vanLeeuwen:vanWeert}, four of the integrations in \eqref{eq:Q} can be carried out in the center-of-momentum system to get rid of the delta functions and obtain that
\begin{equation}\label{eq:collision:operator}
    \begin{aligned}
        Q ( F_1, F_2 ) & = \int_{\mathbb{R}^3} \int_{\mathbb{S}^2} v_{\text{\o}}(p, q) \sigma ( g,\theta ) \big( F_1 ( p' ) F_2 ( q' ) - F_1( p ) F_2 ( q ) \big) d\omega dq \\
        & =: Q_+ (F_1, F_2) - Q_- (F_1, F_2).
    \end{aligned}
\end{equation}
Here $v_{\text{\o}}$ is the M{\o}ller velocity
\[
    v_{\text{\o}}(p, q) := \frac{\mathfrak{c}}{2}\sqrt{\Big| \frac{p}{p_0}-\frac{q}{q_0}\Big| ^2 - \Big| \frac{p}{p_0} \times \frac{q}{q_0}\Big| ^2}=\frac{\mathfrak{c}}{2}\frac{g\sqrt{s}}{p_0q_0}.
\]
For the readers' convenience, we provide some useful lemmas on the Lorentz transformations in Appendix \ref{Lorentz}. Note that the angle $\theta$ in \eqref{eq:scattering:angle} depends only on $P,Q$ and their collision angle. By Lemma \ref{lem:pbar} and Lemma \ref{lem:post:tilde}, $\theta$ can also be expressed as
\begin{equation}\label{eq:theta}
    \cos{\theta} = \frac{\bar{p}}{|\bar{p}|} \cdot \omega,
\end{equation}
where $\bar{p}$ and $\omega$ are defined by 
\begin{equation}\label{eq:p:bar}
    \Lambda(P-Q)=(0,\bar{p})^T, \qquad \Lambda(P'-Q')=(0,2g\omega)^T,
\end{equation} 
and $\Lambda$ is a Lorentz transformation which satisfies $\Lambda(P+Q) = (\sqrt{s},0,0,0)^T$. If we choose $\Lambda$ to be the boost matrix of $P+Q$, then $\omega$ in \eqref{eq:p:bar} is exactly the one in the center of momentum system \eqref{eq:post:velocity:boost} and \eqref{eq:post:energy:boost}. Then one can write $d\omega = \sin {\theta}d\theta d\psi$ with $0\le\theta\le\pi$ and $0\le\psi\le2\pi$. 

It is shown in \cite{deGroot:vanLeeuwen:vanWeert} that
\[
    Q(F,F) \equiv 0 \quad \Longleftrightarrow \quad F = J_{[ \rho, u,T ]}\mbox{ for some $\rho, u$ and $T$}.
\]
If the mass density $\rho$, fluid bulk velocity $u=(u_1,u_2,u_3) \in \mathbb{R}^3$ and temperature $T>0$ are functions of $t$ and $x$, the distribution function $J$ is referred to as {\it local relativistic Maxwellians}. In the event that the variables $\rho$, $u$ and $T$ are constants, then $J$ is referred to as {\it global relativistic Maxwellians}.
	
A function $\phi(p)$ is called a {\it collision invariant} of $Q$ if it holds for any $F\in L^2(\mathbb{R}^3)$ that
\[
    \langle \phi, Q(F,F) \rangle = 0,
\]
where $\langle \cdot, \cdot \rangle$ is the inner product in $L^2_p(\mathbb R^3)$. It is evident that $Q$ has five collision invariants 
\begin{equation}\label{eq:invariants}
    \phi_0=1,\qquad \phi_i=p_i, \quad i=1,2,3, \qquad \phi_4=p_0,
\end{equation}
which indicate the conservation of mass, momentum and energy, respectively, during the binary collision of particles. 

\subsection{Properties of $L$ and $\Gamma$}\label{sub:Properties:L:Gamma}

Let
\begin{equation}\label{eq:J0inf}
    J_\infty^0 := J_{[1, 0, T_\infty]}(p) = \frac{1}{4\pi \mathfrak{c} T_\infty K_2(\frac{\mathfrak{c}^2}{T_\infty})} e^{-\frac{\mathfrak{c} p_0}{T_\infty}}
\end{equation}
be the relativistic Maxwellian with zero bulk velocity. 

The existence of solution to \eqref{eq:BVP} will be studied by perturbation theory for the linearized problem. Let the linear operator $L$ and nonlinear operator $\Gamma$ be defined through
\begin{align*}
    L(f) & := J_\infty^{-\frac{1}{2}} [ Q(J_\infty,J_\infty^{\frac{1}{2}}f)+Q(J_\infty^{\frac{1}{2}}f,J_\infty) ], \\
    \Gamma(f,f) & := J_\infty^{-\frac{1}{2}}Q(J_\infty^{\frac{1}{2}}f,J_\infty^{\frac{1}{2}}f),
\end{align*}
where $J_\infty$ is the global Maxwellian with nonzere bulk velocity defined in (\ref{eq:J:infty}).
	
Let $\mathcal{N} := \text{ker}(L)$, the null space of $L$, then 
\begin{equation}\label{eq:Null:L}
    \mathcal{N} = \text{span} \big\{ J_\infty^\frac{1}{2}\phi_i(p) \big\}, \quad i=0,1,\cdots,4,
\end{equation}
where $\phi_i(p)$ are the collision invariants in (\ref{eq:invariants}). So $\mathcal{N}$ can be viewed as a 5-dimensional subspace of $L_p^2(\mathbb R^3)$. Let $\mathcal{N}^\perp$ be the orthogonal complement of $\mathcal{N}$, and let
\begin{equation}\label{eq:mapping}
    P_0: L_p^2(\mathbb R^3)\rightarrow \mathcal{N}, \qquad P_1: L_p^2(\mathbb R^3)\rightarrow \mathcal{N}^\perp
\end{equation}
be the orthogonal projection operators.

Now we present some properties of the linear operator $L$ and bilinear operator $\Gamma$.
\begin{proposition}\label{prop:L}
    Suppose that the scattering kernel $\sigma$ satisfies \eqref{eq:sigma}, \eqref{eq:ab} and \eqref{eq:gamma}. Then 
    
    {\rm (i)} $L$ can be decomposed as $L = -\nu (p) + K$, where  
    \[
        \nu(p) := \int_{\mathbb{R}^3} \int_{\mathbb{S}^2} v_{\text{\rm \o}} \sigma(g, \theta) J_\infty(q) d\omega dq 
        > 0, 
    \]
    satisfying
    \begin{equation}\label{eq:nu}
        \nu_0^{-1} p_0^\frac{a}{2} \le \nu(p) \le \nu_0 p_0^\frac{a}{2},
    \end{equation}
    for some constant $\nu_0>0$ depending only on $\mathfrak{c}, u_{\infty}, T_{\infty}$ and $a$ in \eqref{eq:sigma}. 
        
    {\rm (ii)} $K$ is a compact operator in the form 
    \[
        K f(p) = \int_{\mathbb{R}^3} k(p,q) f(q) dq, 
    \]
    which has the regularizing property that it is bounded as an operator
    \[
        K: L_p^2 \rightarrow L_p^2 \cap L_p^\infty,  \qquad K: L_{p,\beta}^\infty \rightarrow L_{p,\beta+\eta}^\infty
    \]
    for all $\beta \ge 0$ and $\eta$ as defined in \eqref{eq:eta}. 
    
    {\rm (iii)} $L$ is non-positive and self-adjoint on $L_p^2$. For any $f\in L_p^2$, it holds that 
    \begin{equation}\label{eq:L:es}
        \langle  f, Lf  \rangle \le - c \langle \nu(p) P_1 f, P_1 f \rangle, 
    \end{equation}
    where $c>0$ is a constant independent of $f$ and $P_1$ is the projection operator onto $\mathcal{N}^\perp$.
\end{proposition}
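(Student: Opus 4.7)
My strategy is to reduce to the standard Maxwellian $J_\infty^0$ via a Lorentz boost, and then follow a classical splitting plus spectral-gap argument. Let $\Lambda_\infty$ be the boost taking $U_\infty=(u_{\infty,0},u_\infty)^T$ to $(\mathfrak{c},0,0,0)^T$, and let $\tilde{p}$ denote the spatial part of $\Lambda_\infty P$. Since $v_{\text{\o}}\sigma(g,\theta)$ is built from the Lorentz scalars $s$ and $g$ and the substitution $p\mapsto\tilde{p}$ preserves $dp/p_0$, the operator $L$ associated with $J_\infty$ is unitarily equivalent on $L_p^2(\mathbb{R}^3)$ to the corresponding operator with $J_\infty^0$, and similarly for $K$ and $\nu$. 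So it suffices to prove (i)--(iii) with $J_\infty$ replaced by $J_\infty^0$, for which the hard-potential analysis of \cite{Glassey:Strauss,Strain:stability,Strain:Zhu} can be adapted to the parameter ranges \eqref{eq:ab}--\eqref{eq:gamma}.

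For (i), splitting $Q=Q_+-Q_-$ as in \eqref{eq:collision:operator} produces $Lf=-\nu(p)f+Kf$, where $-\nu(p)f$ comes from $J_\infty^{-\frac{1}{2}}Q_-(J_\infty^{\frac{1}{2}}f,J_\infty)$, yielding precisely the stated formula for $\nu(p)$. For the two-sided bound $\nu_0^{-1}p_0^{a/2}\le\nu(p)\le\nu_0 p_0^{a/2}$, I would use $v_{\text{\o}}=\frac{\mathfrak{c} g\sqrt{s}}{2p_0 q_0}$ together with \eqref{eq:sigma}, so that $v_{\text{\o}}\sigma$ is sandwiched by constants times $g^{a+1}\sqrt{s}/(p_0 q_0)$ and $(g^a+g^{-b})\sqrt{s}/(p_0 q_0)$. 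Splitting the $q$-integral into $\{|q|\ll|p|\}$, where $g\sim|p|/2$ and $\sqrt{s}\sim p_0$, and its complement, and using the exponential decay of $J_\infty(q)$ in $q_0$, one reads off the power $p_0^{a/2}$.

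For (ii), the remaining three pieces assemble into an integral operator $Kf(p)=\int k(p,q)f(q)\,dq$, whose kernel I would obtain by performing the $\omega$-integration via \eqref{eq:post:velocity:boost}--\eqref{eq:post:energy:boost} and the Jacobian computation of \cite{Chapman:Jang:Strain}. Carleman-type pointwise bounds on $k$ (of the form $c|p-q|^{-\alpha} e^{-\delta|p-q|^2/(1+|p|+|q|)}$ modulated by polynomial factors in $p_0,q_0$) then yield the $L^2\to L^2\cap L^\infty$ boundedness and the $L_{p,\beta}^\infty\to L_{p,\beta+\eta}^\infty$ gain after splitting the $q$-integration into small- and large-$g$ regions. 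The exponent $\eta$ in \eqref{eq:eta} is exactly what emerges from balancing the singularities allowed by \eqref{eq:ab}--\eqref{eq:gamma} (the worst contributions being $g^a$, $g^{-b}$ and $\sin^\varsigma\theta$) against the Gaussian decay of $J_\infty$; compactness of $K$ on $L_p^2$ then follows by norm-approximation via Hilbert--Schmidt truncations.

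For (iii), self-adjointness and nonpositivity of $L$ follow from classical pre/post-collisional symmetrization: exploiting detailed balance $J_\infty(p)J_\infty(q)=J_\infty(p')J_\infty(q')$ (a Lorentz-invariant consequence of the rest-frame identity for $J_\infty^0$) and the $(p,q)\leftrightarrow(p',q')$ invariance of $v_{\text{\o}}\sigma\,d\omega\,dp\,dq$, one obtains
\[
\langle f,Lf\rangle=-\tfrac{1}{4}\iiint v_{\text{\o}}\sigma(g,\theta)\,J_\infty(p)J_\infty(q)\bigl[h(p')+h(q')-h(p)-h(q)\bigr]^2 d\omega\,dq\,dp,
\]
with $h:=f/J_\infty^{1/2}$, which is $\le 0$ and vanishes exactly on $\mathcal{N}=\ker L$, confirming \eqref{eq:Null:L}. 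The weighted coercivity \eqref{eq:L:es} is the main obstacle: since $L=-\nu+K$ with $K$ compact on $L_p^2$ and $\ker L=\mathcal{N}$, a contradiction argument along a normalized sequence $f_n\in\mathcal{N}^\perp$ with $-\langle f_n,Lf_n\rangle/\langle \nu f_n,f_n\rangle\to 0$, combined with weak compactness and compactness of $K$, produces a nonzero limit in $\mathcal{N}\cap\mathcal{N}^\perp$, a contradiction. The delicate point, and where the hard-potential exponent $a$ really enters, is that the gap has to hold against the $\nu$-weighted norm rather than the plain $L^2$ norm, which forces one to track the large-$p_0$ behavior of $K$ carefully; this is where (i) and the regularizing estimate in (ii) are used in a crucial way.
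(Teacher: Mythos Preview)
Your overall strategy---reduce from $J_\infty$ to the standard Maxwellian $J_\infty^0$ via a Lorentz boost and then invoke the hard-potential theory of Glassey--Strauss---is exactly the paper's approach. The paper makes this reduction very explicit: it proves the identities $\nu(p)=\frac{\tilde p_0}{p_0}\,\nu^0(\tilde p)$ and $k_i(p,q)=\frac{\tilde p_0\tilde q_0}{p_0q_0}\,k_i^0(\tilde p,\tilde q)$, and then transfers the known estimates for $\nu^0,k^0$ using $\tilde p_0\sim p_0$. So rather than ``unitary equivalence,'' what is actually used is equivalence up to bounded Jacobian factors; your formulation is morally correct but slightly imprecise. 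One caution: the Carleman-type pointwise bound you write, $c|p-q|^{-\alpha}e^{-\delta|p-q|^2/(1+|p|+|q|)}$, is the non-relativistic template; the relativistic kernel has the rather different structure $\bigl[|p\times q|^2+\mathfrak{c}^2|p-q|^2\bigr]^{-1/2}|p-q|^{-(b+|\varsigma|)}e^{-c|p-q|}$ times polynomial factors, and it is this form that produces the exponent $\eta$ in \eqref{eq:eta}.

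The one place where the paper's argument is genuinely simpler than yours is (iii). You propose a contradiction argument via weak compactness to upgrade the $L^2$ spectral gap to the $\nu$-weighted coercivity \eqref{eq:L:es}. The paper instead takes the unweighted gap $-\langle f,Lf\rangle\ge c_0\|P_1 f\|^2$ as known and does a one-line $\epsilon$-splitting: write $-\langle P_1 f,LP_1 f\rangle=(1-\epsilon)(-\langle P_1 f,LP_1 f\rangle)+\epsilon\langle P_1 f,\nu P_1 f\rangle-\epsilon\langle P_1 f,KP_1 f\rangle$, use the unweighted gap on the first term and $L^2$-boundedness of $K$ on the last, and choose $\epsilon$ small so that $(1-\epsilon)c_0-c\epsilon>0$. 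This avoids any compactness or sequence extraction and is worth knowing as a general trick.
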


\begin{proof}
    If $u_\infty = 0$, then $J_\infty$ becomes $J_\infty^0$ in \eqref{eq:J0inf}, and the proposition  for $J_\infty^0$ was proved in \cite{Glassey:Strauss}. We only need to present the proof for $u_\infty \not= 0$.
    
    Let $\nu^0(p)$ and $k^0(p,q)$ be the corresponding functions for $J_\infty^0$. 
	
    (i) Any Lorentz transformation $\Lambda$ ensures that for any two four-vectors $P$ and $Q$, 
    \[
        P \cdot Q = (\Lambda P) \cdot (\Lambda Q). 
    \]
    
    By Remark \ref{rmk:Lambda:U} in the appendix, let $\Lambda_U$ be a Lorentz transformation, depending only on $U$, such that $\Lambda_U U = (\sqrt{U\cdot U},0,0,0) = (\mathfrak{c},0,0,0)^T$. 
    Let $\tilde{Q} := \Lambda_U Q$, $\tilde{P} := \Lambda_U P$. By Lemma \ref{lem:Lorentz:inv} in the appendix, $\frac{dq}{q_0}$ is a Lorentz invariant measure, i.e. $\frac{dq}{q_0}=\frac{d\tilde{q}}{\tilde{q}_0}$. Hence 
    \begin{align*}
        \nu(p) = & \int_{\mathbb{R}^3} \int_{\mathbb{S}^2} v_{\text{\o}} \sigma(g,\theta) J_\infty(q) d \omega d q \\ 
        = & \frac{\mathfrak{c}}{2}\cdot\frac{1}{4\pi \mathfrak{c} T_\infty K_2(\frac{\mathfrak{c}^2}{T_\infty})} \cdot\frac{1}{p_0} \int_{\mathbb{R}^3} \int_{\mathbb{S}^2} g(P,Q)\sqrt{s(P,Q)} \sigma(g(P,Q),\theta(P,Q,\omega) )  e^{-\frac{(\Lambda_U Q)\cdot (\Lambda_U U)}{T_\infty}} d \omega \frac{d\Tilde{q}}{\Tilde{q}_0}. 
    \end{align*}
    Now we consider the changes of angle $\theta$ under Lorentz transformation. By \eqref{eq:theta}, \eqref{eq:p:bar}, the definition of $\tilde{P}$, $\tilde{Q}$, Lemma \ref{lem:LT:mul} and Lemma \ref{lem:theta}, we have that
    \begin{align*}
        \cos{\theta}&=\frac{\bar{p}}{|\bar{p}|} \cdot \omega \\
        &=-\frac{[\Lambda_{P+Q}(P-Q)]\cdot(0,\omega)}{ 2 g } \\
        &=-\frac{[\Lambda_{P+Q}\Lambda_{U}^{-1}\Lambda_{U}(P-Q)]\cdot(0,\omega) }{ 2 g } \\
        &= -\frac{[\Lambda_{\Lambda_{U}(P+Q)}\Lambda_{U}(P-Q)]\cdot[\Lambda_{\Lambda_{U}(P+Q)}\Lambda_{U}\Lambda_{P+Q}^{-1}(0,\omega)]}{ 2 g } \\  
        &=-\frac{[\Lambda_{\tilde{P}+\tilde{Q}}(\tilde{P}-\tilde{Q})] \cdot ( 0, \tilde{\omega} ) }{ 2 g },  
    \end{align*}
    By Lemma \ref{lem:pbar}, we denote $\Lambda_{\tilde{P}+\tilde{Q}} (\tilde{P}-\tilde{Q}) =: (0, \bar{\tilde{p}})$ with $|\bar{\tilde{p}}| = 2 g$. Therefore, $\cos\theta = \bar{\tilde{p}} \cdot \tilde{\omega} / |\bar{\tilde{p}}|$, $\theta(P,Q,\omega) = \theta(\tilde{P}, \tilde{Q},\tilde{\omega})$. Note that $g$ and $s$ are invariant under any Lorentz transformations. Therefore, 
    \[
        \nu(p) = 
        \frac{\mathfrak{c}}{2}\cdot\frac{1}{4\pi \mathfrak{c} T_\infty K_2(\frac{\mathfrak{c}^2}{T_\infty})} \cdot\frac{1}{p_0} \int_{\mathbb{R}^3} \int_{\mathbb{S}^2} g(\Tilde{P},\Tilde{Q})\sqrt{s(\Tilde{P}, \Tilde{Q})}
        \sigma(g(\Tilde{P},\Tilde{Q}),\theta( \tilde{P}, \tilde{Q}, \tilde{\omega} ) )  e^{-\frac{\mathfrak{c}\Tilde{q}_0}{T_\infty}} d \tilde{\omega} \frac{d\Tilde{q}}{\Tilde{q}_0}. 
    \]
        
    Swap the notations $\tilde{q}$ and $q$, $\tilde{\omega}$ and $\omega$, we write $\nu(p)$ as
    \begin{equation}\label{eq:nu:in:lem}
        \begin{aligned}
            \nu(p)= & \frac{\mathfrak{c}}{2} \cdot \frac{1}{4\pi \mathfrak{c} T_\infty K_2(\frac{\mathfrak{c}^2}{T_\infty})} \cdot\frac{1}{p_0} \int_{\mathbb{R}^3} \int_{\mathbb{S}^2} g(\tilde{P},Q) \sqrt{ s(\tilde{P}, Q)} \sigma(g(\tilde{P}, Q), \theta( \tilde{P}, Q, \omega )) e^{-\frac{\mathfrak{c}q_0}{T_\infty}} d \omega \frac{dq}{q_0} \\
            = & \frac{\tilde{p}_0}{ p_0} \nu^0(\tilde{p}), 
        \end{aligned}
    \end{equation}
    where $\nu^0(p)$ corresponds to the standard relativistic Maxwellian $J_\infty^0$ in \eqref{eq:J0inf}. 
    From \cite{Glassey:Strauss}, we know that there exists a constant $\nu_0>0$ such that
    \begin{equation}\label{eq:nu0}
        \nu_0^{-1} p_0^\frac{a}{2} \le \nu^0 (p) \le \nu_0 p_0^\frac{a}{2}, 
    \end{equation}
    with $a$ being the coefficient in \eqref{eq:sigma}. 
        
    From Lemma \ref{lem:tildepq:equiv}, there exist positive constants $c_1$ and $c_2$, depending only on $\Lambda_U$, such that 
    \begin{equation}\label{eq:p0:equivalence}
        0 < c_1 \le \frac{\Tilde{p}_0}{p_0}, \frac{\Tilde{q}_0}{q_0} \le c_2. 
    \end{equation}
    Combining \eqref{eq:nu:in:lem}-\eqref{eq:p0:equivalence}, we can obtain estimate \eqref{eq:nu}. 
        
    (ii)
    By definition, 
    \[
        \begin{aligned}
            Kf(p) & = \int_{\mathbb{R}^3} \int_{\mathbb{S}^2} v_{\text{\o}} \sigma(g,\theta) W_0(q)[W_0(q')f(p')+W_0(p')f(q')-W_0(p)f(q)] d\omega dq\\
            & =: K_2f(p)-K_1f(p)\\
            &= \int_{\mathbb{R}^3} k_2(p,q) f(q)dq-\int_{\mathbb{R}^3} k_1(p,q) f(q)dq. 
        \end{aligned}
    \]
    Similar as in (i), let $\Lambda_U$ be a Lorentz transformation such that $\Lambda_U U = (\mathfrak{c},0,0,0)^T$, $\tilde{Q} := \Lambda_U Q$, $\tilde{P} := \Lambda_U P$. Then we have that
    \[
        \begin{aligned}
            K_1 f(p) & = \int_{\mathbb{R}^3} \int_{\mathbb{S}^2} v_{\text{\o}} \sigma(g,\theta) W_0(q)W_0(p)f(q) d\omega dq \\ 
        & = \frac{\mathfrak{c}}{2} \cdot \frac{1} 
            {4\pi \mathfrak{c} T_\infty K_2(\frac{\mathfrak{c}^2}{T_\infty})} \cdot \frac{1}{p_0} \int_{\mathbb{R}^3} f(q) \frac{dq}{q_0} \int_{\mathbb{S}^2} g(P,Q) \sqrt{s(P,Q)} \sigma(g(P,Q), \theta(P,Q,\omega) ) e^{-\frac{(P+Q) \cdot U}{2T_\infty}} d \omega  \\
        & = \frac{\mathfrak{c}}{2} \cdot \frac{1} 
            {4\pi \mathfrak{c} T_\infty K_2(\frac{\mathfrak{c}^2}{T_\infty})} \cdot \frac{1}{p_0} \int_{\mathbb{R}^3} f(q) \frac{dq}{q_0} \int_{\mathbb{S}^2} g(\tilde{P},\tilde{Q}) \sqrt{s(\tilde{P},\tilde{Q})} \sigma(g(\tilde{P},\tilde{Q}),\theta(\tilde{P},\tilde{Q},\tilde{\omega}) ) e^{-\frac{\mathfrak{c} (\tilde{p}_0+\tilde{q}_0)}{2T_\infty}} d \tilde{\omega}  \\
        & = \int_{\mathbb{R}^3} \frac{\tilde{p}_0 \tilde{q}_0}{p_0 
        q_0} k_1^0(\tilde{p},\tilde{q}) f(q)dq.
        \end{aligned}
    \]
    Here
    \[
            k_1^0(p,q) = \frac{\mathfrak{c}}{2} \cdot \frac{1}{4\pi \mathfrak{c} T_\infty K_2(\frac{\mathfrak{c}^2}{T_\infty})} \cdot \frac{1}{p_0 q_0} \int_{\mathbb{S}^2} g(P,Q) \sqrt{s(P,Q)} \sigma(g(P,Q),\theta) e^{-\frac{\mathfrak{c} (p_0+q_0)}{2T_\infty}} d \omega, 
    \]
    which corresponds to the standard relativistic Maxwellian $J_\infty^0$. Hence, 
    \[
        k_1(p,q) = \frac{\tilde{p}_0 \tilde{q}_0}{ p_0 q_0 }k_1^0(\tilde{p},\tilde{q}). 
    \]

    Similarly, by using Lorentz transformation and the expression of $k_2(p,q)$ in \cite{Marek:Maria}, we have that
    \begin{equation}\label{eq:k2}
        k_2(p,q)=\frac{\tilde{p}_0 \tilde{q}_0}{ p_0 q_0 }k_2^0(\tilde{p},\tilde{q}), 
    \end{equation}
    where $k_2^0(p,q)$ is the corresponding kernel function for the standard relativistic Maxwellian $J_\infty^0$.
    By \eqref{eq:p0:equivalence}, we know that $k_1(p,q)$ and $k_2(p,q)$ have the same properties as $k_1^0(p,q)$ and $k_2^0(p,q)$ as in \cite{Glassey:Strauss}. 

    (iii)
    From the decomposition of $L$ and the properties of $K$, it is known for any $f \in \mathcal{N}^\perp$ that 
    \[
        \langle  f,Lf  \rangle \le - c_0 \langle f,f \rangle,
    \]
    where $c_0$ is a positive constant independent of $f$. By (ii), we have that
    \[
        L = - \nu(p) + K,
    \]
    and $K$ is a compact operator from $L^2_p$ to itself such that there exists a constant $c>0$ such that $| \langle  f, K f \rangle | \le c \langle  f, f  \rangle$. For any $c>0$, we can choose a small constant $\epsilon > 0$, such that $(1-\epsilon) c_0 - c \epsilon > 0$. Then for any $f\in L^2_p$, it holds that 
    \[
    \begin{aligned}
        - \langle  f,Lf  \rangle & = - \langle  P_1f,L(P_1f)  \rangle\\
        &=-\langle  P_1f,(1-\epsilon)L(P_1f) \rangle-\langle  P_1f,\epsilon L(P_1f)  \rangle \\
        &\ge (1-\epsilon) c_0 \langle  P_1f, P_1f  \rangle+\epsilon \langle  P_1f, \nu(p)P_1f  \rangle-c\epsilon \langle  P_1f, P_1f  \rangle\\
        &\ge \epsilon \langle  P_1f, \nu(p)P_1f  \rangle. 
    \end{aligned}
    \]
    Hence (iii) holds. 
\end{proof}

\begin{lemma}[\cite{Glassey:Strauss}]\label{lem:sigma:estimate}
    There exists a constant $c > 0$, depending on $T_\infty$, $\mathfrak{c}$ and the parameters in \eqref{eq:sigma}, \eqref{eq:ab} and \eqref{eq:gamma}, such that 
    \[
         \int_{\mathbb{R}^3} \int_{\mathbb{S}^2} \sigma(g, \theta) J_\infty(q)^0 d\omega dq = c \int_{\mathbb{R}^3} \int_{\mathbb{S}^2} \sigma( g, \theta ) e^{-\frac{\mathfrak{c} q_0}{2 T_\infty}}  d\omega dq	\le c p_0^{\frac{a}{2}}.
    \]
\end{lemma}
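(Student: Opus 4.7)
The plan is to read this as an upper bound on the linear collision frequency kernel, where the intermediate ``='' should be understood as comparability up to absolute constants. I would proceed in three steps.

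First, for the left-hand equality I would simply plug in the explicit formula \eqref{eq:J0inf} for $J_\infty^0$, so that $J_\infty^0(q) = C(T_\infty,\mathfrak{c}) e^{-\mathfrak{c} q_0/T_\infty}$ with $C$ an absolute constant. Splitting $e^{-\mathfrak{c} q_0/T_\infty} = e^{-\mathfrak{c} q_0/(2T_\infty)} e^{-\mathfrak{c} q_0/(2T_\infty)}$ and bounding the second factor by $1$ (or, for the reverse direction, by noting that only $q_0 \lesssim 1$ contributes meaningfully once multiplied by any polynomial in $g$) yields both directions of comparison between the two integrals up to constants that can be absorbed into $c$.

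Second, I would estimate the reference integral. From the upper bound in \eqref{eq:sigma},
\begin{equation*}
    \sigma(g,\theta) \le c_2 \bigl( (g/\mathfrak{c})^a + (g/\mathfrak{c})^{-b} \bigr) \sin^{\varsigma}\theta,
\end{equation*}
and the angular integral $\int_{\mathbb{S}^2} \sin^{\varsigma}\theta\,d\omega = 2\pi \int_0^\pi \sin^{1+\varsigma}\theta\,d\theta$ is finite because $|\varsigma|<1$ is guaranteed by \eqref{eq:gamma}. Thus it remains to show
\begin{equation*}
    I(p) := \int_{\mathbb{R}^3} \bigl( g(P,Q)^a + g(P,Q)^{-b} \bigr) e^{-\mathfrak{c} q_0/(2T_\infty)} dq \le c\, p_0^{a/2}.
\end{equation*}

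Third, I would bound $I(p)$ using the identity $2g^2 = P\cdot Q - \mathfrak{c}^2 = p_0 q_0 - p\cdot q - \mathfrak{c}^2$. The upper bound $g^2 \le p_0 q_0$ (from $|p\cdot q|\le |p||q|\le p_0 q_0$) handles the $g^a$ piece: $g^a \le p_0^{a/2} q_0^{a/2}$, and $\int q_0^{a/2} e^{-\mathfrak{c} q_0/(2T_\infty)} dq$ is a finite absolute constant, giving the required $p_0^{a/2}$. The $g^{-b}$ piece is the main obstacle. Here I would exploit $b<1/2$ (from \eqref{eq:ab}) together with the lower bound $g^2 \gtrsim |p-q|^2 \mathfrak{c}^2 /(p_0 q_0)$ (which follows from $P\cdot Q - \mathfrak{c}^2 = \tfrac{1}{2}(|P-Q|_{\text{Mink}}^2) \gtrsim \mathfrak{c}^2 |p-q|^2/(p_0 q_0)$ after expanding $\sqrt{\mathfrak{c}^2+|p|^2}\sqrt{\mathfrak{c}^2+|q|^2} - p\cdot q - \mathfrak{c}^2$). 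This reduces the singular part to $\int_{\mathbb{R}^3} |p-q|^{-b} (p_0 q_0)^{b/2} e^{-\mathfrak{c} q_0/(2T_\infty)} dq$, where the singularity at $q=p$ is locally integrable since $b<3$, and the Gaussian-type factor in $q_0$ controls behavior at infinity. A factor $p_0^{b/2}$ appears, but since $b/2 < a/2$ it can be absorbed into the dominating $p_0^{a/2}$ term (or, when $a=0$, one verifies $b=0$ as well from the interplay of \eqref{eq:ab} and \eqref{eq:gamma}).

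The hard part is the relativistic lower bound $g^2 \gtrsim \mathfrak{c}^2|p-q|^2/(p_0 q_0)$ and making sure the polynomial prefactors in $p_0$ coming from the two pieces are consistently absorbed into $p_0^{a/2}$. This is where the specific constraints on $a,b,\varsigma$ in \eqref{eq:ab}--\eqref{eq:gamma} get used, and it is the main technical content inherited from \cite{Glassey:Strauss}.
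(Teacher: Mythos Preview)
The paper does not supply its own proof of this lemma: it is stated with a citation to \cite{Glassey:Strauss} and used as a black box (in the proof of Proposition~\ref{prop:gamma}). So there is nothing to compare against beyond noting that your sketch reconstructs the standard Glassey--Strauss argument.

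Your outline is essentially correct, but the handling of the $g^{-b}$ term contains a wrong justification. You write that the stray factor $p_0^{b/2}$ can be absorbed because ``$b/2<a/2$'', and then add parenthetically that $a=0$ forces $b=0$ via \eqref{eq:ab}--\eqref{eq:gamma}. Neither is true: the constraints $0\le b<1/2$, $0\le a<2-2b$ allow, for instance, $a=0$, $b=1/4$. What actually saves you is that for large $|p|$ the exponential confines $q$ to a bounded region, so $|p-q|^{-b}\lesssim |p|^{-b}$ there, and the contribution becomes $p_0^{b/2}\cdot p_0^{-b}=p_0^{-b/2}\le C$; near $q=p$ the singularity $|p-q|^{-b}$ is locally integrable since $b<3$, and the remaining region is controlled by $e^{-cq_0}$. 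Hence the $g^{-b}$ piece is uniformly bounded in $p$, which is stronger than $\le c\,p_0^{a/2}$ (recall $a\ge 0$, $p_0\ge\mathfrak{c}$). Once you replace the incorrect ``$b/2<a/2$'' step with this large-$|p|$ versus small-$|p|$ splitting, the argument goes through.
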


\begin{lemma}[\cite{Glassey:Strauss}]\label{lem:p0'q0'p0}
    Let $p_0'$, $q_0'$ be the post-collisional energies of $p$ and $q$. For any $\beta \ge 0$, it holds for some $c_\beta > 0$ that
    \[
        (p_0')^\beta (q_0')^\beta \ge c_\beta (p_0)^\beta.
    \]
\end{lemma}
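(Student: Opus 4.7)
The plan is to use just two elementary ingredients: the conservation of energy $p_0 + q_0 = p_0' + q_0'$ from \eqref{eq:conservation:laws}, and the built-in lower bound $p_0', q_0' \ge \mathfrak{c}$ coming from the definition $p_0' = \sqrt{\mathfrak{c}^2 + |p'|^2}$ (and similarly for $q_0'$). These are exactly the two structural facts about relativistic energies that the statement depends on.

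First I would observe that since $q_0 \ge \mathfrak{c} > 0$, energy conservation already gives $p_0' + q_0' = p_0 + q_0 \ge p_0$. Consequently the larger of the two post-collisional energies satisfies
\[
\max\{p_0', q_0'\} \;\ge\; \tfrac{1}{2}(p_0' + q_0') \;\ge\; \tfrac{1}{2} p_0,
\]
while the smaller satisfies $\min\{p_0', q_0'\} \ge \mathfrak{c}$. Multiplying the two bounds yields
\[
p_0' q_0' \;\ge\; \mathfrak{c} \cdot \tfrac{1}{2} p_0,
\]
and raising both sides to the power $\beta \ge 0$ gives $(p_0')^\beta (q_0')^\beta \ge (\mathfrak{c}/2)^\beta\, p_0^\beta$, so the claim holds with $c_\beta = (\mathfrak{c}/2)^\beta$.

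There is no real obstacle here; the only thing worth noting is that the same argument is symmetric in $p$ and $q$, so one could equally well derive $(p_0')^\beta (q_0')^\beta \ge (\mathfrak{c}/2)^\beta q_0^\beta$, and in applications the better of the two is used. The constant is not sharp, but any $\beta$-dependent constant suffices because in the paper this lemma feeds into weighted $L^\infty$ estimates where the momentum weight $p_0^\beta$ only needs to be controlled up to a multiplicative constant absorbed into the final bound.
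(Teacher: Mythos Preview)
Your argument is correct: energy conservation $p_0'+q_0'=p_0+q_0\ge p_0$ together with the lower bound $\min\{p_0',q_0'\}\ge\mathfrak{c}$ immediately gives $p_0'q_0'\ge(\mathfrak{c}/2)p_0$, and raising to the power $\beta\ge 0$ yields the claim with $c_\beta=(\mathfrak{c}/2)^\beta$. The paper itself does not supply a proof of this lemma---it is quoted directly from \cite{Glassey:Strauss}---so there is no in-paper argument to compare against; your elementary proof is exactly the standard one and nothing more is needed.
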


\begin{proposition}\label{prop:gamma}
    Suppose that the scattering kernel $\sigma$ satisfies \eqref{eq:sigma}, \eqref{eq:ab} and \eqref{eq:gamma}. 
    Then the projection of $\Gamma(h_1, h_2)$ on $\mathcal{N}$, the null space of $L$, vanishes and there exists a constant $c>0$ such that for any $\beta \ge 0$, 
    \[
        \begin{aligned}
            \| \nu^{-1} \Gamma(h_1,h_2) \|_{L^\infty_{x,p,\beta}}\le c \| h_1 \|_{L^\infty_{x,p,\beta}} \| h_2 \|_{L^\infty_{x,p,\beta}}. 
        \end{aligned}
    \]
\end{proposition}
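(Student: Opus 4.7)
My plan is to handle the two claims separately, starting with the orthogonality and then the weighted $L^\infty$ bound, following the standard macro--micro strategy adapted to the relativistic setting.

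For the orthogonality claim $P_0\Gamma(h_1,h_2)=0$, I would test against the basis $J_\infty^{1/2}\phi_i$ of $\mathcal{N}$ and reduce to showing $\int_{\mathbb{R}^3}\phi_i(p)\,Q(J_\infty^{1/2}h_1,J_\infty^{1/2}h_2)(p)\,dp=0$ for $i=0,\dots,4$. The key ingredient is the pre/post collision change of variables $(p,q,\omega)\mapsto(p',q',\omega')$ together with the $p\leftrightarrow q$ symmetry of $v_{\text{\o}}\sigma$; combined with the symmetrized bilinear convention (obtained from $\Gamma(f,f)$ by polarization), a standard weak-formulation computation yields a weight of the form $\phi_i(p)+\phi_i(q)-\phi_i(p')-\phi_i(q')$, which vanishes identically by the collision-invariance \eqref{eq:invariants} and the conservation law \eqref{eq:conservation:laws}.

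For the weighted estimate I would split $\Gamma=\Gamma_+-\Gamma_-$ corresponding to $Q=Q_+-Q_-$ from \eqref{eq:collision:operator}. The loss term simplifies to $\Gamma_-(h_1,h_2)(p)=h_1(p)\int v_{\text{\o}}\sigma J_\infty^{1/2}(q)h_2(q)\,d\omega\,dq$, so using $|h_i|\le\|h_i\|_{L^\infty_{x,p,\beta}}\,p_0^{-\beta}$, together with the elementary bound $q_0^{-\beta}\le\mathfrak{c}^{-\beta}$ (for $\beta\ge 0$), reduces the problem to controlling $\int v_{\text{\o}}\sigma J_\infty^{1/2}(q)\,d\omega\,dq$ by $c\,\nu(p)$. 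This follows from Lemma \ref{lem:sigma:estimate} (applied with the Maxwellian $J_\infty$ replaced by $J_\infty^{1/2}$, which is still a Maxwellian-type weight absorbing the polynomial growth of $\sigma$) combined with the lower bound $\nu(p)\ge\nu_0^{-1}p_0^{a/2}$ from Proposition \ref{prop:L}(i).

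For the gain term, the crucial identity is $J_\infty(p')J_\infty(q')=J_\infty(p)J_\infty(q)$, which is a direct consequence of the exponential form of $J_\infty$ in \eqref{eq:J:infty} and the four-momentum conservation \eqref{eq:conservation:laws}. This lets me rewrite
\[
\Gamma_+(h_1,h_2)(p)=\int_{\mathbb{R}^3}\int_{\mathbb{S}^2}v_{\text{\o}}\sigma\,J_\infty^{1/2}(q)\,h_1(p')h_2(q')\,d\omega\,dq.
\]
Bounding $|h_1(p')h_2(q')|\le\|h_1\|_{L^\infty_{x,p,\beta}}\|h_2\|_{L^\infty_{x,p,\beta}}(p_0')^{-\beta}(q_0')^{-\beta}$ and invoking Lemma \ref{lem:p0'q0'p0} to trade the post-collisional weights for $c_\beta^{-1}p_0^{-\beta}$ reduces matters again to the same angular/momentum integral as above, which is controlled by $c\,\nu(p)$. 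Summing the $\Gamma_\pm$ contributions and dividing by $\nu(p)$ yields the desired bound uniformly in $x$.

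The main obstacle I anticipate is the gain term: one needs the right algebraic identity for the Maxwellian product and the correct deployment of Lemma \ref{lem:p0'q0'p0} so that the post-collisional velocity weights get transferred to the pre-collisional momentum $p$. Once this is in place, the remaining integrals are of the same type as those estimated for $\nu(p)$, so they are absorbed cleanly; the $L^\infty$ nature of the estimate then follows immediately since the $x$-dependence is only carried by the supremum norms of $h_1,h_2$.
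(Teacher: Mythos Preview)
Your proposal is correct and follows essentially the same route as the paper: writing $\Gamma(h_1,h_2)=\int v_{\text{\o}}\sigma\,W_0(q)[h_1(p')h_2(q')-h_1(p)h_2(q)]\,d\omega\,dq$, invoking Lemma~\ref{lem:p0'q0'p0} on the gain term, and reducing to the bound $\int\sigma\,W_0(q)\,d\omega\,dq\le c\,\nu(p)$ via Lemma~\ref{lem:sigma:estimate} and Proposition~\ref{prop:L}(i). The one step you gloss over is that Lemma~\ref{lem:sigma:estimate} is stated for the \emph{zero-velocity} Maxwellian $J_\infty^0$, so to apply it to $W_0=J_\infty^{1/2}$ with $u_\infty\neq 0$ the paper inserts the Lorentz transformation $\Lambda_U$ (exactly as in the proof of Proposition~\ref{prop:L}) together with the equivalence $\tilde p_0\sim p_0$ from Lemma~\ref{lem:tildepq:equiv}; you should make that reduction explicit rather than appealing to the lemma ``with $J_\infty$ replaced by $J_\infty^{1/2}$''.
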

\begin{proof}
    By definition, 
    \[
    \begin{aligned}
        \Gamma(h_1,h_2)&=W_0^{-1}(p)Q(W_0 h_1,W_0 h_2)\\
        & = \int_{\mathbb{R}^3} \int_{\mathbb{S}^2} v_{\text{\o}} \sigma(g,\theta) W_0(q)[h_1(p')h_2(q')-h_1(p)h_2(q)] d \omega d q. 
    \end{aligned}
    \]
    In order to get the conclusion for general Maxwellian, according to Lemma \ref{lem:sigma:estimate} and Lemma \ref{lem:tildepq:equiv}, letting $\tilde{Q} := \Lambda_U Q$, $\tilde{P} := \Lambda_U P$, we have that
    \[
    \begin{aligned}
         \int_{\mathbb{R}^3} \int_{\mathbb{S}^2}\sigma(g,\theta) W_0(q) d\omega dq & = \Big( \frac{1}{4\pi \mathfrak{c} T_\infty K_2(\frac{\mathfrak{c}^2}{T_\infty})} \Big)^{1/2}  \int_{\mathbb{R}^3}  \int_{\mathbb{S}^2} \sigma(g,\theta(P,Q,\omega)) e^{-\frac{Q\cdot U}{2 T_\infty}} q_0 d\omega \frac{dq}{q_0}\\
        & = \Big( \frac{1}{4\pi \mathfrak{c} T_\infty K_2(\frac{\mathfrak{c}^2}{T_\infty})} \Big)^{1/2} \int_{\mathbb{R}^3} \int_{\mathbb{S}^2} \sigma(g,\theta(\tilde{P},\tilde{Q},\tilde{\omega})) e^{-\frac{\mathfrak{c} \tilde{q}_0}{2 T_\infty}} q_0 d\tilde{\omega} \frac{d\tilde{q}}{\tilde{q}_0}\\
        &\le \frac{c}{c_1} \int_{\mathbb{R}^3} \int_{\mathbb{S}^2} \sigma(g,\theta(\tilde{P},\tilde{Q},\tilde{\omega})) e^{-\frac{\mathfrak{c} \tilde{q}_0}{2 T_\infty}} d\tilde{\omega} d\tilde{q}\\
        &\le c \tilde{p}_0^{\frac{a}{2}} \le c p_0^{\frac{a}{2}} \le c \nu(p). 
    \end{aligned}
    \]
    Now we claim that $v_{\text{\o}}$ is bounded. Actually, it is easy to prove that
    \[
        p_0q_0 \ge \mathfrak{c}^2 + |p\cdot q|. 
    \]
    Therefore, 
    \[
        2 g \sqrt{g^2+\mathfrak{c}^2} \le 2g^2+\mathfrak{c}^2 \le 2 (g^2+\mathfrak{c}^2) = p_0 q_0 - p \cdot q + \mathfrak{c}^2 \le 2 p_0 q_0, 
    \]
    \[
        |v_{\text{\o}}|=\frac{\mathfrak{c}g\sqrt{s}}{2p_0q_0}=\frac{\mathfrak{c}g\sqrt{g^2+\mathfrak{c}^2}}{p_0q_0}\le \mathfrak{c}.
    \]
    By Lemma \ref{lem:p0'q0'p0}, we have that
    \[
    \begin{aligned}
        p_0^{\beta}|\Gamma(h_1,h_2)| &\le \frac{1}{c_\beta}\int_{\mathbb{S}^2} \int_{\mathbb{R}^3} v_{\text{\o}} \sigma(g,\theta) W_0(q) (p_0')^{\beta} (q_0')^{\beta} |h_1(p')h_2(q')|dqd\omega\\
        &+ \int_{\mathbb{S}^2} \int_{\mathbb{R}^3} v_{\text{\o}} \sigma(g,\theta) W_0(q) p_0^{\beta} |h_1(p)h_2(q)|dqd\omega\\
        &\le c \|h_1\|_{L^\infty_{p,\beta}(L^\infty_x)} \|h_2\|_{L^\infty_{p,\beta}(L^\infty_x)} \int_{\mathbb{S}^2} \int_{\mathbb{R}^3} \sigma(g,\theta) W_0(q)dqd\omega\\
        &\le c \nu(p) \|h_1\|_{L^\infty_{p,\beta}(L^\infty_x)} \|h_2\|_{L^\infty_{p,\beta}(L^\infty_x)}. 
    \end{aligned}
    \]
    This completes the proof of the proposition. 
\end{proof}

\begin{proposition}\label{prop:k:weight}
    Assume the scattering kernel $\sigma$ satisfying \eqref{eq:sigma}, \eqref{eq:ab} and \eqref{eq:gamma}. Then the integral kernel $k(p,q)$ is symmetric and satisfies
    
    {\rm (i)}
    \begin{equation}\label{eq:k:es}
        k(p,q)\le\frac{c(\mathfrak{c}+|q|)^{\frac{3|\varsigma|+a+2b}{2}}e^{-\frac{c\mathfrak{c}|p-q|}{T_\infty}}}{\big[|p\times q|^2 + \mathfrak{c}^2 |p-q|^2\big]^{\frac{1}{2}}|p-q|^{b+|\varsigma|}},
    \end{equation}
    where $c$ depends on $u_\infty, T_\infty, \mathfrak{c}$ and the parameters in \eqref{eq:sigma}.

    {\rm (ii)} $\underset{q}{\sup}\int|k(p,q)|dp<\infty$,
        
    {\rm (iii)} $\underset{q}{\sup}\int k^2(p,q)dp<\infty$,
        
    {\rm (iv)} $\int |k(p,q)|( \mathfrak{c}^2 + |p|^2)^{\frac{\alpha}{2}}dp \le c (\mathfrak{c}^2 + |q|^2)^{\frac{(\alpha - \eta)}{2}}$, for any $\alpha \in \mathbb{R}$, where $\eta\in (0,1]$ is defined in \eqref{eq:eta}. 
\end{proposition}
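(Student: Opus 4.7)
The plan is to reduce everything to the standard Maxwellian case $J_\infty^0$, for which the analogous properties of $k^0(p,q)$ are established in \cite{Glassey:Strauss}, and then transfer back via the Lorentz representation
\[
    k(p,q) = \frac{\tilde{p}_0 \tilde{q}_0}{p_0 q_0}\, k^0(\tilde p, \tilde q), \qquad \tilde P := \Lambda_U P,\ \tilde Q := \Lambda_U Q,
\]
derived in the proof of Proposition~\ref{prop:L}(ii). Since the prefactor is symmetric in $(p,q)$ and $k^0$ is symmetric, symmetry of $k$ is immediate.

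For (i), I would apply the pointwise $k^0$-estimate of \cite{Glassey:Strauss} in the tilde variables and convert back to $(p,q)$ using Lemma~\ref{lem:tildepq:equiv}. The prefactor $\tilde p_0 \tilde q_0/(p_0 q_0)$ is bounded above and below, and $(\mathfrak{c}+|\tilde q|) \sim \tilde q_0 \sim q_0 \sim (\mathfrak{c}+|q|)$. The geometric denominator is handled by the algebraic identity
\[
    |p\times q|^2 + \mathfrak{c}^2|p-q|^2 = p_0^2 q_0^2 - (p\cdot q + \mathfrak{c}^2)^2 = 2 g^2\,\big(p_0 q_0 + p\cdot q + \mathfrak{c}^2\big) = 4 g^2\,(p_0 q_0 - g^2),
\]
which rewrites this quantity in terms of the Lorentz invariant $g$ and the scalar $p_0 q_0 - g^2 \ge \mathfrak{c}^2$; the analogous tilde formula together with $\tilde p_0 \tilde q_0 \sim p_0 q_0$ then gives a two-sided comparison. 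The factor $|p-q|^{b+|\varsigma|}$ is treated similarly via $|p-q|^2 = (p_0-q_0)^2 + 4g^2$ and its tilde counterpart.

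For (ii)--(iv), the cleanest route is a change of variables $p \mapsto \tilde p$ inside each integral, using the Lorentz invariance $dp/p_0 = d\tilde p/\tilde p_0$ (Lemma~\ref{lem:Lorentz:inv}). For (iv), this yields
\[
    \int |k(p,q)|(\mathfrak{c}^2+|p|^2)^{\alpha/2}\,dp = \frac{\tilde q_0}{q_0} \int |k^0(\tilde p,\tilde q)|\,p_0^{\alpha}\,d\tilde p \sim \int |k^0(\tilde p,\tilde q)|\,\tilde p_0^{\alpha}\,d\tilde p,
\]
which is bounded by $c\,\tilde q_0^{\alpha-\eta} \sim c\,(\mathfrak{c}^2+|q|^2)^{(\alpha-\eta)/2}$ once the corresponding weighted $L^1$-bound on $k^0$ from \cite{Glassey:Strauss} is invoked. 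Taking $\alpha = 0$ gives (ii); the $L^2$-version of the Glassey--Strauss bound (after a similar change of variables) gives (iii).

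The principal technical obstacle is item (i), since $|p\times q|^2 + \mathfrak{c}^2|p-q|^2$ is not Lorentz invariant. My strategy reduces the question to comparing $p_0 q_0 - g^2$ with $\tilde p_0 \tilde q_0 - g^2$: both are bounded below by $\mathfrak{c}^2$, so Lemma~\ref{lem:tildepq:equiv} delivers a two-sided bound with constants depending only on $U$. Once (i) is in hand, the remaining parts are routine consequences of the corresponding results for $k^0$ combined with the Lorentz invariance of $dp/p_0$.
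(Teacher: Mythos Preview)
Your reduction to the standard kernel $k^0$ via $k(p,q)=\frac{\tilde p_0\tilde q_0}{p_0q_0}k^0(\tilde p,\tilde q)$ and the change of variables $dp/p_0=d\tilde p/\tilde p_0$ is exactly the right engine for (ii), (iii), and the case $\alpha\le 0$ of (iv); this is also how the paper argues. But two points need correction.

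\textbf{The transfer for (i) breaks.} Your key step asserts that $p_0q_0-g^2$ and $\tilde p_0\tilde q_0-g^2$ are two-sided comparable because both are $\ge\mathfrak c^2$ and $p_0q_0\sim\tilde p_0\tilde q_0$. That inference is false: subtraction destroys the equivalence. With $\mathfrak c=1$ and $\Lambda_U$ a boost of rapidity $v$ along the first axis, take
\[
p=(\gamma v\sqrt{1+t^2},\,t,\,0),\qquad q=(\gamma v\sqrt{1+t^2},\,-t,\,0),
\]
so that $\tilde p=(0,t,0)$, $\tilde q=(0,-t,0)$ and $g^2=t^2$. A direct computation gives $p_0q_0-g^2=\gamma^2(1+v^2t^2)-t^2+?$\ldots\ more precisely $p_0q_0-g^2=\tfrac{1+v^2t^2}{1-v^2}\sim \gamma^2v^2t^2$, while $\tilde p_0\tilde q_0-g^2=1$. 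Hence $|p\times q|^2+\mathfrak c^2|p-q|^2\sim 4\gamma^2v^2t^4$ but $|\tilde p\times\tilde q|^2+\mathfrak c^2|\tilde p-\tilde q|^2=4t^2$, and the ratio is unbounded. Since the Glassey--Strauss bound on $k^0(\tilde p,\tilde q)$ has the \emph{tilde} quantity in the denominator, you cannot trade it for the larger untilded one: the desired bound \eqref{eq:k:es} is strictly sharper in this regime than what the transfer yields. The paper does not attempt this transfer; instead it notes $g,s$ are invariant, $|p-q|\sim|\tilde p-\tilde q|$, $q_0\sim\tilde q_0$, and then \emph{reruns} the derivation of Lemma~3.7 in \cite{Glassey:Strauss} for $k$ itself. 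That is the correct route.

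\textbf{For (iv) with $\alpha>0$ you are invoking a bound that is not in \cite{Glassey:Strauss}.} The cited lemma there covers only $\alpha\le 0$. The paper handles $\alpha>0$ by a direct computation from the pointwise estimate \eqref{eq:k:es}: split $\int|k(p,q)|p_0^{\alpha}\,dp$ into $\{|p|>2|q|\}$ and $\{|p|<2|q|\}$, pass to polar coordinates about $q$ via $|p\times q|=|q||p-q|\sin\theta$, and integrate. Your change-of-variables reduction to $k^0$ is valid, but it only postpones the issue: you still have to prove the $\alpha>0$ case for $k^0$, which requires exactly this splitting argument.
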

\begin{proof}
    According to the proof of (ii) in Proposition \ref{prop:L}, we obtain that $ k(p,q) \sim k^0(\tilde{p},\tilde{q})$. Therefore $k(p,q)$ has the same properties as $k^0(\tilde{p},\tilde{q})$, which is the kernel corresponding to the standard relativistic Maxwellian $J_\infty^0$. 
    
    By \eqref{eq:k2} and Lemma \ref{lem:tildepq:equiv}, we know that $g$ and $s$ are invariants, $|p-q| \sim |\tilde{p}-\tilde{q}|$, 
    $\mathfrak{c}+|q| \sim q_0 \sim \tilde{q}_0 \sim \mathfrak{c}+|\tilde{q}|$. 
    Following the same process of proving Lemma 3.7 in \cite{Glassey:Strauss}, we know that (i) holds. According to Lemma 3.8 in \cite{Glassey:Strauss}, we know that (ii) (iii) and (iv) for $\alpha\le 0$ holds. It remains to prove (iv) in the case $\alpha > 0$. 
        
    When $\alpha > 0$ in (iv), we split the integral into two parts
    \[
    \begin{aligned}
        \int |k(p,q)|(\mathfrak{c}^2+|p|^2)^{\frac{\alpha}{2}}dp & = \int_{|p|>2|q|} |k(p,q)|(\mathfrak{c}^2+|p|^2)^{\frac{\alpha}{2}}dp + \int_{|p|<2|q|} |k(p,q)|(\mathfrak{c}^2+|p|^2)^{\frac{\alpha}{2}}dp \\
        & =: I_1+I_2.
    \end{aligned}
    \]
    When $|p|>2|q|$, we have $|p-q|\ge|p|-|q|>|q|$. Then by \eqref{eq:k:es}, it holds that
    \[
    \begin{aligned}
        I_1 & \le c(\mathfrak{c}+|q|)^{\frac{3|\varsigma| +a+2b}{2}}\int_{|p|>2|q|} \frac{e^{-\frac{c\mathfrak{c}|p-q|}{T_\infty}}}{\big[|p\times q|^2+\mathfrak{c}^2|p-q|^2\big]^{\frac{1}{2}}|p-q|^{b+|\varsigma|}}(\mathfrak{c}^2+|p|^2)^{\frac{\alpha}{2}}dp \\
        & \le c(\mathfrak{c}+|q|)^{\frac{3|\varsigma|+a+2b}{2}}\int_{|p|>2|q|} \frac{e^{-\frac{c\mathfrak{c}|p-q|}{T_\infty}}}{\big[|p\times q|^2+\mathfrak{c}^2|p-q|^2\big]^{\frac{1}{2}}|p-q|^{b+|\varsigma|}}(\mathfrak{c}^2+|p-q|^2+|q|^2)^{\frac{\alpha}{2}}dp. 
    \end{aligned}
    \]
    We write $p\times q=q\times (q-p)$ and let $r=|p-q|$, $|q\times (q-p)|=|q| r \sin\theta$. Then
    \[
    \begin{aligned}
        I_1 & \le c(\mathfrak{c}+|q|)^{\frac{3|\varsigma|+a+2b}{2}}\int_{|q|}^{\infty}\int_{0}^{\pi} \frac{e^{-\frac{c\mathfrak{c}r}{T_\infty}}r^2\sin\theta}{\big[|q|^2r^2\sin^2\theta+\mathfrak{c}^2 r^2\big]^{\frac{1}{2}} r^{b+|\varsigma|}}(\mathfrak{c}^2+r^2)^{\frac{\alpha}{2}}d\theta dr\\
        &=c(\mathfrak{c}+|q|)^{\frac{3|\varsigma|+a+2b}{2}}\int_{|q|}^{\infty} \frac{e^{-\frac{c\mathfrak{c}r}{T_\infty}}(\mathfrak{c}^2+r^2)^{\frac{\alpha}{2}}}{r^{b+|\varsigma|-1}} \int_{0}^{\pi} \frac{\sin\theta}{\big[|q|^2\sin^2\theta+\mathfrak{c}^2\big]^{\frac{1}{2}} }d\theta dr\\
        &\le c(\mathfrak{c}+|q|)^{\frac{3|\varsigma|+a+2b}{2}} \cdot \frac{\arctan |q|}{|q|}
        \int_{|q|}^{\infty} \frac{e^{-\frac{c\mathfrak{c}r}{T_\infty}}(\mathfrak{c}^2+r^2)^{\frac{\alpha}{2}}}{r^{b+|\varsigma|-1}}  dr\\
        &\le c(\mathfrak{c}+|q|)^{\frac{3|\varsigma|+a+2b}{2}} \cdot \frac{1}{\mathfrak{c} + |q|} \cdot e^{-\frac{c\mathfrak{c}|q|}{T_\infty}} (\mathfrak{c}+|q|)^{\alpha + 1 - b - |\varsigma|} \\
        &\le c(\mathfrak{c}+|q|)^{-\eta+\alpha}.
    \end{aligned}
    \]
        
    For $I_2$, we have $0 \le |p-q|\le|p|+|q|<3|q|$. Then
    \[
    \begin{aligned}
        I_2 & \le c(\mathfrak{c}+|q|)^{\frac{3|\varsigma|+a+2b}{2}}\int_{|p|<2|q|} \frac{e^{-\frac{c\mathfrak{c}|p-q|}{T_\infty}}}{\big[|p\times q|^2+\mathfrak{c}^2|p-q|^2\big]^{\frac{1}{2}}|p-q|^{b+|\varsigma|}}(\mathfrak{c}^2+|p|^2)^{\frac{\alpha}{2}}dp \\
        &\le c(\mathfrak{c}+|q|)^{\frac{3|\varsigma|+a+2b}{2}+\alpha} \int_{0}^{3|q|} \int_{0}^{\pi} \frac{e^{-\frac{c\mathfrak{c}r}{T_\infty}}r^2\sin\theta}{\big[|q|^2r^2\sin^2\theta+\mathfrak{c}^2r^2\big]^{\frac{1}{2}} r^{b+|\varsigma|}}d\theta dr\\
        &\le c(\mathfrak{c}+|q|)^{\frac{3|\varsigma|+a+2b}{2}+\alpha} \int_{0}^{3|q|} \frac{e^{-\frac{c\mathfrak{c}r}{T_\infty}}}{r^{b+|\varsigma|-1}}\int_{0}^{\pi} \frac{\sin\theta}{\big[|q|^2\sin^2\theta+\mathfrak{c}^2\big]^{\frac{1}{2}} }d\theta dr \\
        &\le c(\mathfrak{c}+|q|)^{\frac{3|\varsigma|+a+2b}{2}+\alpha} \cdot \frac{\arctan |q|}{|q|} \cdot \int_{0}^{3|q|} \frac{e^{-\frac{c\mathfrak{c}r}{T_\infty}}}{r^{b+|\varsigma|-1}}dr. 
    \end{aligned}
    \]
    Note that for sufficiently large $|q|$, we have for $0<b+|\varsigma|<\frac{1}{2}$ that
    \[
        I_2 \le c(\mathfrak{c}+|q|)^{\frac{3|\varsigma|+a+2b}{2}-1+\alpha} = c(\mathfrak{c}+|q|)^{-\eta+\alpha}. 
    \]
    For small $|q|$, it can be easily derived that $I_2$ is bounded for $0 < b + |\varsigma| < \frac{1}{2}$ through the L'Hospital's rule. 
    Therefore (iii) holds for $\alpha > 0$. 
\end{proof}

\subsection{Properties of $P_0\hat{p}_1P_0$}

Recall from \eqref{eq:Null:L} that $\mathcal{N} = \text{span} \big\{ J_\infty^\frac{1}{2}\phi_i(p) \big\}$, $i=0,1,\cdots,4$. Let $\{\chi_i\}_{i=0}^4$ be the set of standard orthogonal bases of $\mathcal{N}$ satisfying $\langle \chi_i, \chi_j \rangle = \delta_{ij}$. In the case that $u_\infty = (u_{\infty,1},0,0)$, by Lemma \ref{lem:moments}, $\chi_i$ are in the form of 
\[
    \begin{aligned}
        \chi_0 & = \sqrt{\frac{\mathfrak{c}}{u_0}}
        J_\infty^{\frac{1}{2}} (p),\\
        \chi_1 & = \frac{1}{\sqrt{A_1} \mathfrak{c}}(p_1-a_1 u_{\infty,1})J_\infty^{\frac{1}{2}} (p),\\
        \chi_2 & = \sqrt{\frac{\mathfrak{c}}{a_1 u_0 T_\infty}} \cdot p_2 J_\infty^{\frac{1}{2}} (p),\\
        \chi_3 & = \sqrt{\frac{\mathfrak{c}}{a_1 u_0 T_\infty}} \cdot p_3 J_\infty^{\frac{1}{2}} (p),\\
        \chi_4 & = \frac{1}{\sqrt{A_1 A_2 T_\infty}} ( A_3 \mathfrak{c} + A_4 p_1 + A_1 p_0 ) J_\infty^{\frac{1}{2}} (p), 
    \end{aligned}
\]
where
\begin{align}
    u_0 & = \sqrt{\mathfrak{c}^2+u_{\infty,1}^2},\\
    z &:= \frac{\mathfrak{c}^2}{T_\infty},\\
    a_1(z) & := \frac{K_3(z)}{K_2(z)} > 0, \label{eq:a1} \\
    a_2(z) & := - a_1^2 + 6 \frac{a_1}{z} + 1 > 0, \label{eq:a2} \\ 
    a_3(z) & := - a_1^3 + \frac{6 a_1^2}{z} - \frac{6 a_1}{z^2} + a_1 - \frac{1}{z} > 0, \label{eq:a3} \\
    A_1(z, \frac{u_{\infty,1}}{\mathfrak{c}}) & := \frac{u_0}{\mathfrak{c}} ( a_2 \frac{u_{\infty,1}^2}{\mathfrak{c}^2} + \frac{a_1}{z} ) = \frac{u_0}{\mathfrak{c}} ( a_2 \frac{u_0^2}{\mathfrak{c}^2} - a_2 + \frac{a_1}{z} ), \\
    A_2(z, \frac{u_{\infty,1}}{\mathfrak{c}}) & := a_3 \frac{u_{\infty,1}^2}{\mathfrak{c}^2} + a_3 + \frac{a_2}{z} - \frac{a_1}{z^2} = a_3 \frac{u_0^2}{\mathfrak{c}^2} + \frac{a_2}{z} - \frac{a_1}{z^2}, \label{eq:A2} \\
    A_3(z, \frac{u_{\infty,1}}{\mathfrak{c}}) & := \frac{1}{z} \big( ( 2 a_2 - \frac{6 a_1}{z} - 1 ) \frac{u_{\infty,1}^2}{\mathfrak{c}^2} + a_2 - \frac{5 a_1}{z} - 1 \big) = \frac{1}{z} \big( (2 a_2 - \frac{6 a_1}{z} - 1 ) \frac{u_0^2}{\mathfrak{c}^2} - a_2 + \frac{a_1}{z} \big), \\ 
    A_4(z, \frac{u_{\infty,1}}{\mathfrak{c}}) & := -\frac{a_2 u_0^2 u_{\infty,1} }{\mathfrak{c}^3}. 
\end{align}

Note that $z$, $a_i$, $i=1,2,3$ and $A_i$, $i=1,\cdots,4$ are all dimensionless quantities, $a_i$ depends only on $z$, and $A_i$ depends on $z$ and $\frac{u_{\infty,1}}{\mathfrak{c}}$. 

We define the macroscopic operator $B$ as 
\begin{equation}\label{eq:B}   
    B := P_0\hat{p}_1P_0,
\end{equation}
which is the 5-dimensional linear bounded self-adjoint operator. Here $P_0$ is the mapping defined in (\ref{eq:mapping}). According to Lemma \ref{lem:moments}, its form is
\begin{equation}\label{eq:B:2}
    B = \begin{pmatrix}
        \mathfrak{c}\frac{u_{\infty,1}}{u_0}&\frac{T_\infty}{\sqrt{A_1\mathfrak{c} u_0}}&0&0&\frac{(a_1-z 
        a_2)u_{\infty,1}T_\infty^2}{\sqrt{A_1A_2\mathfrak{c}^5u_0T_\infty}}\\
        \frac{T_\infty}{\sqrt{A_1\mathfrak{c} u_0}}&\mathfrak{c}\frac{u_{\infty,1}}{u_0}&0&0&\frac{T_\infty^2}{\mathfrak{c} u_0\sqrt{T_\infty A_2}}\\
        0&0&\mathfrak{c}\frac{u_{\infty,1}}{u_0}&0&0\\
        0&0&0&\mathfrak{c}\frac{u_{\infty,1}}{u_0}&0\\
        \frac{(a_1-z a_2)u_{\infty,1}T_\infty^2}{\sqrt{A_1A_2\mathfrak{c}^5u_0T_\infty}}&\frac{T_\infty^2}{\mathfrak{c} u_0\sqrt{T_\infty A_2}}&0&0&\mathfrak{c}\frac{u_{\infty,1}}{u_0}+\frac{2u_{\infty,1}T_\infty^2(a_1-z a_2)}{\mathfrak{c}^3u_0A_2}
    \end{pmatrix}, 
\end{equation}
where the entries of $B$ are calculated by $\langle \chi_i, \hat{p}_1 \chi_j\rangle$, $i, j = 0, 1, \cdots,  4$.
The eigenvalues of the matrix $B$ can be readily calculated, and are given by
\begin{equation}\label{eq:lambda}
    \lambda_1 = \frac{a_3u_0u_{\infty,1}-\mathfrak{c}^2M}{\mathfrak{c} A_2},\quad \lambda_i = \mathfrak{c}\frac{u_{\infty,1}}{u_0} (i=2,3,4),\quad 
    \lambda_5 = \frac{a_3u_0u_{\infty,1}+\mathfrak{c}^2M}{\mathfrak{c} A_2},
\end{equation}
where
\begin{equation}\label{eq:M}
    M := \sqrt{\frac{1}{z}(a_2-\frac{a_1}{z})( a_3 + \frac{a_2}{z} - \frac{a_1}{z^2})}.
\end{equation}
\begin{lemma}\label{lem:Mach:number}
    {\rm (i)} $\displaystyle \lambda_1 = 0 \Leftrightarrow u_{\infty,1} = \sqrt{T_\infty \cdot \frac{z a_2(z) - a_1(z) }{z a_3(z)} }$.
    
    {\rm (ii)} $\displaystyle \lambda_5 = 0 \Leftrightarrow u_{\infty,1} = - \sqrt{T_\infty \cdot \frac{z a_2(z) - a_1(z) }{z a_3(z)} }$.
\end{lemma}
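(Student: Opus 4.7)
The plan is to prove the lemma by direct algebraic manipulation of the explicit expressions \eqref{eq:lambda} and \eqref{eq:M}, reducing each equivalence to a quadratic in $u_{\infty,1}^2$ and then using the sign information in $M \ge 0$, $a_3, u_0 > 0$ to eliminate spurious roots introduced by squaring.

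First, for part (i), I would rewrite $\lambda_1 = 0$ as $a_3 u_0 u_{\infty,1} = \mathfrak{c}^2 M$. Since $M \ge 0$ and $a_3, u_0 > 0$, this forces $u_{\infty,1} \ge 0$, so squaring both sides is equivalent on this branch. Squaring and using $u_0^2 = \mathfrak{c}^2 + u_{\infty,1}^2$ together with the definition of $M$ yields
\[
    a_3^2(\mathfrak{c}^2+u_{\infty,1}^2)u_{\infty,1}^2 \;=\; \frac{\mathfrak{c}^4 \alpha}{z}\Big(a_3+\frac{\alpha}{z}\Big), \qquad \alpha := a_2 - \frac{a_1}{z} = \frac{z a_2 - a_1}{z}.
\]
Viewed as a quadratic in $v := u_{\infty,1}^2$, the product of its roots is negative (since $\alpha > 0$ under the standing assumption $a_2, a_3 > 0$, which indeed gives $\alpha > 0$ using $a_2 = -a_1^2 + 6a_1/z + 1 > 0$), so there is exactly one non-negative root, and one verifies by direct substitution that $v = \mathfrak{c}^2 \alpha/(z a_3)$ solves the equation. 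Translating back using $T_\infty = \mathfrak{c}^2/z$ gives $u_{\infty,1}^2 = T_\infty(za_2-a_1)/(za_3)$, and the sign constraint $u_{\infty,1} \ge 0$ selects the positive square root.

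For part (ii), the argument is identical except that $\lambda_5 = 0$ becomes $a_3 u_0 u_{\infty,1} = -\mathfrak{c}^2 M \le 0$, forcing $u_{\infty,1} \le 0$ and hence the negative square root. The squared equation is the same as in part (i), so the same quadratic analysis applies.

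The whole proof is a bookkeeping exercise; I do not anticipate any conceptual obstacle. The only point that needs attention is the removal of sign ambiguity from squaring: one must record, before squaring, that $M \ge 0$ pins down $\operatorname{sgn}(u_{\infty,1})$ in each case, and one must check (using the listed positivity of $a_2, a_3$ and the explicit formula for $a_2$) that $\alpha = a_2 - a_1/z > 0$, so the square root in the statement is real and the unique non-negative root of the quadratic coincides with the claimed value. Everything else is routine substitution.
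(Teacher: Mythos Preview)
Your approach is correct and essentially identical to the paper's: both square $\lambda_1=0$ to reduce to a quadratic in $u_{\infty,1}^2$, though the paper solves it by recognizing $M^2/a_3^2 = Y^2+Y$ with $Y=(a_2-a_1/z)/(za_3)$, so that $(u_{\infty,1}/\mathfrak{c})^2$ and $Y$ satisfy $X^2+X=Y^2+Y$, whereas you argue via the sign of the product of roots; your sign bookkeeping (recording $u_{\infty,1}\ge 0$ before squaring) is in fact more careful than the paper's. One minor remark: your claim that $\alpha>0$ follows from $a_2>0$ via its explicit formula is not quite right---$\alpha=a_2-a_1/z=-a_1^2+5a_1/z+1>0$ is a separate Bessel-function inequality, implicitly assumed when $c_\infty$ is defined in \eqref{eq:Mach:number}.
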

\begin{proof}
    We only need to prove (i). Since $\lambda_1 = 0$, then 
    \[
        a_3^2 (u_{\infty,1}^2+\mathfrak{c}^2) u_{\infty,1}^2 = \mathfrak{c}^4 M^2. 
    \]
    Hence, 
    \[
        \Big( \frac{u_{\infty,1}}{\mathfrak{c}} \Big)^4 + \Big( \frac{u_{\infty,1}}{\mathfrak{c}} \Big)^2 = \frac{M^2}{a_3^2} = \frac{(a_2-\frac{a1}{z})(a_3+\frac{a_2}{z} - \frac{a_1}{z^2})}{z a_3^2} = \Big( \frac{a_2-\frac{a1}{z}}{z a_3} \Big)^2 + \frac{a_2-\frac{a1}{z}}{z a_3}. 
    \]
    Therefore, 
    \[
        u_{\infty,1} = \mathfrak{c} \sqrt{\frac{a_2-\frac{a1}{z}}{z a_3}} = \sqrt{T_\infty \cdot \frac{z a_2(z) - a_1(z) }{z a_3(z)} }. 
    \]
    The lemma is proved. 
\end{proof}

Note that the above lemma leads to the definitions of the sound speed $c_\infty$ in \eqref{eq:Mach:number}. 

\begin{remark}\label{rmk:Mach:number}
    There is no eigenvalue of $B$ vanishes when $\mathcal{M}^\infty = \frac{u_{\infty,1}}{c_\infty}$ does not equal $\pm 1$ or $0$. 
\end{remark}

Let 
\[
    I^+ := \{ j \mid \lambda_j>0 \}, \qquad I^- := \{ j \mid \lambda_j<0 \}. 
\]
The operator $B$ on space $\mathcal{N}$ is decomposed into positive part $B_+$ and negative part $B_-$, and its corresponding eigen-mappings are $P_0^+$ and $P_0^-$, respectively. If $\mathcal{M}^\infty\ne 0,\pm 1$, then there are no zero eigenvalues of $B$. Therefore
\begin{equation}\label{eq:contruct:damping}
    B = B_+ + B_-, \qquad P_0 = P_0^+ + P_0^-.
\end{equation}
It will be demonstrated that the number of solvability conditions for problem \eqref{eq:BVP} is determined by  $\mathcal{M}^\infty$. We will show that the codimension of the permissible boundary data changes with the value of $\mathcal{M}^\infty$. 

\section{Existence for linear problem with damping}\label{sec:elp}

In this section, we will prove the existence of solution for the linearized relativistic Boltzmann equation with damping by energy method. 

We consider the following boundary value problem with the far field state $J_\infty$. We shall first look for the solution of (\ref{eq:BVP}) in the form 
\[
    F(x,p)=J_\infty(p)+W_0(p)f(x,p),
\]
where $W_0(p)$ is the weight in (\ref{eq:weight}) with $\beta=0$. Then problem (\ref{eq:BVP}) reduces to
\begin{equation}\label{eq:linearized:F}
    \left\{ 
    \begin{array}{ll}
        \hat{p}_1 \partial_x f = L(f) + \Gamma(f,f),\quad & x\in \mathbb{R}^+, p \in \mathbb{R}^3,\\
        f|_{x=0}=a_0( p ),  & p_1>0,( p_2,p_3 ) \in \mathbb{R}^2,\\
        \displaystyle \lim_{x\to +\infty} f = 0,  & p\in \mathbb{R}^3,
    \end{array} 
    \right.
\end{equation}
where
\[
    a_0 = W_0^{-1}(F_0 - J_\infty).
\]

Add a damping term $-\gamma P_0^+ \hat{p}_1 f$, with $\gamma>0$ and $P_0^+$ defined in (\ref{eq:contruct:damping}), to (\ref{eq:linearized:F}) and (\ref{eq:BVP}), then rewrite the equations as
\begin{equation}\label{eq:f}
    \left\{ 
    \begin{array}{ll}
        \hat{p}_1 \partial_x f = L(f) + \Gamma(f,f)-\gamma P_0^+ \hat{p}_1 f,\quad & x\in \mathbb{R}^+, p \in \mathbb{R}^3,\\
        f|_{x=0}=a_0( p ),  & p_1>0,( p_2,p_3 ) \in \mathbb{R}^2,\\
        \displaystyle \lim_{x\to +\infty} f = 0,  & p\in \mathbb{R}^3,
    \end{array} 
    \right.
\end{equation}
and
\begin{equation}\label{eq:F:damping}
    \left\{ 
    \begin{array}{ll}
        \hat{p}_1 \partial_x F = Q( F, F )-\gamma W_0 P_0^+ \hat{p}_1 W_0^{-1}(F-J_\infty), \quad & x\in \mathbb{R}^+, \quad p\in \mathbb{R}^3,\\
        F|_{x=0} = F_0( p ), & p_1>0, \quad ( p_2, p_3 ) \in \mathbb{R}^2,\\
        \displaystyle \lim_{x\to +\infty} F = J_{\infty}( p ),  & p\in \mathbb{R}^3. 
    \end{array} \right.
\end{equation}

Let $f=e^{-\tau x}h$ for $\tau>0$, then \eqref{eq:f} becomes 
\begin{equation}\label{eq:h:2}
    \left\{ 
    \begin{array}{ll}
        \hat{p}_1 \partial_x h - \tau \hat{p}_1 h -L(h)=\zeta-\gamma P_0^+ \hat{p}_1 h, \quad & x\in \mathbb{R}^+,p\in \mathbb{R}^3,\\
        h|_{x=0}=a_0 ( p ),  & p_1>0,( p_2,p_3 ) \in \mathbb{R}^2,\\
        \displaystyle \lim_{x\to +\infty} h = 0, & p\in \mathbb{R}^3,
    \end{array} 
    \right.
\end{equation}
where 
\begin{equation}\label{eq:zeta}
   \zeta := e^{-\tau x}\Gamma(h,h). 
\end{equation}

The problem we consider in this section is to view the nonlinear term $e^{-\tau x}\Gamma(h,h)$ by a given function $\zeta(x,p)$ where $\zeta$ satisfies
\[
    P_0 \zeta =0,\qquad \| \zeta \| < \infty.
\]

The idea of the proof is to introduce a linear functional on a subspace of $L^2_{x,p}$, then demonstrate that the aforementioned linear functional is bounded through the energy estimates, and finally obtain the existence of the solution through the Riesz representation theorem. 

We define the test function space by
\[
    \mathcal{V} := \lbrace \varphi (x,p)\in C_0^\infty([0,\infty)\times\mathbb{R}^3) \mid \varphi^0 := \varphi|_{x=0}=0 \text{ for }  p_1 < 0 \rbrace, 
\]
and let
\[
    \mathcal{U} := \lbrace \xi \mid \xi = - \hat{p}_1 \partial_x \varphi - \tau \hat{p}_1 \varphi + \gamma \hat{p}_1 P_0^{+} \varphi - L(\varphi), \varphi \in \mathcal{V} \rbrace. 
\]

A linear functional on $\mathcal{U}$ is defined as
\[
    \mathcal{L}(\xi) := (\zeta,\varphi) + \langle \hat{p}_1 a_0,\varphi^0 \rangle_+, \qquad \xi\in \mathcal{U}. 
\]

By testing \eqref{eq:h:2} with $\varphi$, it is easy to see that if $h$ is a solution of \eqref{eq:h:2}, then for any $\xi \in \mathcal{U}$, it holds that
\[
    (h,\xi) = \mathcal{L}(\xi).
\]

Based on the above definitions, we can obtain the following lemma about the linear operator $\mathcal{L}$.
\begin{lemma}\label{lem:L:bdd}
    If $\gamma > \tau > 0$ are sufficiently small, then the linear operator $\mathcal{L}$ is bounded on $\mathcal{U}$, and satisfies
    \[
        |\mathcal{L}(\xi)|\le c(\|\zeta\|+\langle \hat{p}_1 a_0,a_0 \rangle_+^{\frac{1}{2}} )\|\xi\|. 
    \]
    In addition, there exists an extension $\bar {\mathcal{L}}$ of $\mathcal{L}$ defined in $L_{x,p}^2$, such that
    \[
        |\bar {\mathcal{L}}| \le c\|\xi\|, 
    \]
    for all $\xi \in L_{x,p}^2$, and with the bound unchanged.	
\end{lemma}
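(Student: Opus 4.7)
The plan is a duality argument: pair $\xi \in \mathcal{U}$ with its defining $\varphi \in \mathcal{V}$ in the $L^2_{x,p}$ inner product, derive a coercivity estimate
\begin{equation*}
(\xi,\varphi) \ge c\bigl(\|\varphi\|^2 + \langle \hat{p}_1\varphi^0,\varphi^0\rangle_+\bigr),
\end{equation*}
and then apply Cauchy--Schwarz twice. First, $(\xi,\varphi) \le \|\xi\|\|\varphi\|$ yields $\|\varphi\| \le c^{-1}\|\xi\|$ and $\langle\hat{p}_1\varphi^0,\varphi^0\rangle_+^{1/2} \le c^{-1/2}\|\xi\|$. Second, since the hypothesis $P_0\zeta = 0$ gives $(\zeta,\varphi) = (\zeta,P_1\varphi)$, we obtain
\begin{equation*}
|\mathcal{L}(\xi)| \le \|\zeta\|\|P_1\varphi\| + \langle\hat{p}_1 a_0,a_0\rangle_+^{1/2}\langle\hat{p}_1\varphi^0,\varphi^0\rangle_+^{1/2} \le C\bigl(\|\zeta\| + \langle\hat{p}_1 a_0,a_0\rangle_+^{1/2}\bigr)\|\xi\|.
\end{equation*}

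The energy identity is assembled term by term. An integration by parts in $x$, using the compact support of $\varphi$ in $[0,\infty)$ and that $\varphi^0 = 0$ on $\{p_1 < 0\}$ by the definition of $\mathcal{V}$, yields $-(\hat{p}_1\partial_x\varphi,\varphi) = \tfrac{1}{2}\langle\hat{p}_1\varphi^0,\varphi^0\rangle_+ \ge 0$. Proposition \ref{prop:L}(iii) supplies $-(L\varphi,\varphi) \ge c\|P_1\varphi\|^2$. Writing $P_0^+ = P_0^+ P_0$ and using the self-adjointness of $B = P_0\hat{p}_1 P_0$ on $\mathcal{N}$ together with the fact that the spectral projections $P_0^\pm$ commute with $B$ and satisfy $P_0^+ P_0^- = 0$, one expands
\begin{equation*}
\gamma(\hat{p}_1 P_0^+\varphi,\varphi) = \gamma(B P_0^+\varphi,P_0^+\varphi) + \gamma(\hat{p}_1 P_0^+\varphi, P_1\varphi),
\end{equation*}
whose leading term is bounded below by $c\gamma\|P_0^+\varphi\|^2$ via the smallest positive eigenvalue of $B$ in \eqref{eq:lambda}. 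Decomposing $\varphi = P_0^+\varphi + P_0^-\varphi + P_1\varphi$ inside the indefinite term $-\tau(\hat{p}_1\varphi,\varphi)$ likewise produces the diagonal contributions $-\tau(B P_0^+\varphi,P_0^+\varphi)$ and $-\tau(B P_0^-\varphi,P_0^-\varphi)$ together with cross terms involving $P_1\varphi$.

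The main obstacle is the indefinite sign of $-\tau(\hat{p}_1\varphi,\varphi)$, which threatens to destroy coercivity on $\mathcal{N}$. The crucial observation is that $B$ is \emph{negative} definite on the range of $P_0^-$, so $-\tau(B P_0^-\varphi,P_0^-\varphi) \ge c\tau\|P_0^-\varphi\|^2$ supplies the only available control on $P_0^-\varphi$; the damping term gives no information there. On the range of $P_0^+$ the $\gamma$ and $-\tau$ contributions combine with net coefficient of order $\gamma - \tau$, which is positive precisely because $\gamma > \tau$. The remaining cross terms in $P_1\varphi$ are absorbed by Young's inequality, which requires $\tau,\gamma$ to be small enough that the spectral-gap constant from Proposition \ref{prop:L}(iii) dominates the $O(\tau)$ and $O(\gamma)$ losses incurred in the absorption. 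Summing all contributions yields the desired coercivity. Finally, since $\mathcal{U}$ is a linear subspace of $L^2_{x,p}$, the Hahn--Banach theorem extends $\mathcal{L}$ to a bounded linear functional $\bar{\mathcal{L}}$ on all of $L^2_{x,p}$ with the same operator norm.
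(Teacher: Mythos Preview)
Your proposal is correct and follows essentially the same approach as the paper: both derive the coercivity inequality $(\xi,\varphi)\ge c\bigl(\|\varphi\|^2+\langle\hat p_1\varphi^0,\varphi^0\rangle_+\bigr)$ via the boundary integration by parts, the spectral decomposition $P_0=P_0^++P_0^-$ of the macroscopic operator $B$, and Proposition~\ref{prop:L}(iii), then conclude with Cauchy--Schwarz and Hahn--Banach. The paper organizes the macroscopic estimate slightly differently (grouping $\varphi_0=P_0\varphi$ first and imposing $\gamma=O(1)\tau$ with $O(1)>2$, which is the quantitative form of your requirement that $\gamma-\tau$ be comparable to $\tau$), but the substance of the argument is identical.
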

\begin{proof}
    For any $\xi \in \mathcal{U}$, there exists a $\varphi \in  \mathcal{V}$ such that $\xi = - \hat{p}_1 \partial_x \varphi-\tau \hat{p}_1 \varphi +\gamma \hat{p}_1 P_0^{+} \varphi - L(\varphi)$. 
    We take the $L_{x,p}^2$ inner product of any $\xi\in \mathcal{U}$ and the corresponding $\varphi \in  \mathcal{V}$, 
    \[
        (\xi,\varphi)=(-\hat{p}_1 \varphi_x,\varphi) - \tau(\hat{p}_1\varphi,\varphi) + \gamma(\hat{p}_1 P_0^+ \varphi,\varphi) - (L \varphi,\varphi). 
    \]
    
    The first term is
    \[
    \begin{aligned}
        ( - \hat{p}_1 \varphi_x, \varphi) & = - \int_{\mathbb{R}^3} \int_{0}^{\infty} \hat{p}_1 \varphi_x \varphi dx dp \\
        & = - \frac{1}{2}\int_{\mathbb{R}^3} \hat{p}_1 \varphi ^2 |_{x=0}^{x=\infty}dp \\
        &=\frac{1}{2}\int_{p_1>0} \hat{p}_1 (\varphi^0) ^2 dp+\frac{1}{2}\int_{p_1<0} \hat{p}_1 (\varphi^0) ^2 dp\\
        &=\frac{1}{2}\langle \hat{p}_1 \varphi^0, \varphi^0 \rangle_+. 
    \end{aligned}
    \]
    
    For convenience, we let $\varphi_i := P_i\varphi$, $i = 0, 1$. For the second and third terms, we have for some generic constant $c$ that
    \begin{equation}\label{eq:estimate}
    \begin{aligned}
        & -\tau ( \hat{p}_1\varphi,\varphi )+\gamma(P_0^+ \hat{p}_1 \varphi,\varphi)\\
        = & - \tau \big( ( P_0 \hat{p}_1 \varphi_0, \varphi_0 ) + 2 ( P_0 \hat{p}_1 \varphi_1, \varphi_0 ) +( \hat{p}_1 \varphi_1, \varphi_1) \big) + \gamma \big( ( P_0^+ \hat{p}_1 \varphi_0, \varphi_0 ) + ( P_0^+ \hat{p}_1 \varphi_1, \varphi_0) \big) \\
        = & - \tau (P_0^-\hat{p}_1\varphi_0,\varphi_0)+(\gamma-\tau)(P_0^+\hat{p}_1\varphi_0,\varphi_0)-2\tau(P_0\hat{p}_1\varphi_1,\varphi_0)-\tau(\hat{p}_1\varphi_1, \varphi_1)+\gamma(P_0^+\hat{p}_1\varphi_1,\varphi_0)\\
        \ge & 2 c \tau \| \varphi_0 \|^2 - c^{-1} \tau \|\varphi_1\|\ \|\varphi_0\|-\tau(\hat{p}_1\varphi_1,\varphi_1)\\
        \ge & c \tau \| \varphi_0 \|^2 - c^{-3} \tau\|\varphi_1\|^2-\tau(\hat{p}_1\varphi_1,\varphi_1), 
    \end{aligned}
    \end{equation}
    where we assume that $\gamma = O(1) \tau$ for some positive constant $O(1) > 2$. 

    Since $(L\varphi, \varphi) = (L\varphi_1,\varphi_1 )$, by \eqref{eq:estimate}, (i) and (iii) of Proposition \ref{prop:L}, we know that for sufficiently small $\tau > 0$, 
    \[
    \begin{aligned}
        (\xi,\varphi) & \ge \frac{1}{2} \langle \hat{p}_1 \varphi^0, \varphi^0 \rangle_+ + c \tau  \| \varphi_0 \|^2 - c \tau \| \varphi_1 \|^2 - \tau ( \hat{p}_1 \varphi_1, \varphi_1 ) + c_0 ( \varphi_1, \nu(p) \varphi_1 ) \\
        & \ge \frac{1}{2}\langle \hat{p}_1 \varphi^0, \varphi^0 \rangle_+ + c \tau \| \varphi_0\|^2 - c \tau \|\varphi_1\|^2-\tau(\hat{p}_1\varphi_1,\varphi_1)+c_0(\varphi_1,p_0^\frac{a}{2}\varphi_1)\\
        &\ge \frac{1}{2}\langle \hat{p}_1 \varphi^0, \varphi^0 \rangle_+ +c\tau\|\varphi_0\|^2+\frac{c_0}{2}(\varphi_1,p_0^\frac{a}{2}\varphi_1)\\
        &\ge c(\langle \hat{p}_1 \varphi^0, \varphi^0 \rangle_+ +\|\varphi\|^2). 
    \end{aligned}
    \]
    Then
    \[
        \langle \hat{p}_1 \varphi^0, \varphi^0 \rangle_+^{\frac{1}{2}} + \| \varphi \| \le c \| \xi \|. 
    \]
    Thus we have 
    \begin{equation}\label{eq:L}
    \begin{aligned}
        |\mathcal{L}(\xi)|&\le  |(\zeta,\varphi)| + | \langle \hat{p}_1 a_0,\varphi^0 \rangle _+| \\
        &\le \|\zeta\| \cdot \|\varphi\| + \langle \hat{p}_1 a_0,a_0\rangle _+^{\frac{1}{2}} \cdot \langle \hat{p}_1 \varphi^0, \varphi^0\rangle _+^{\frac{1}{2}}\\
        &\le c(\|\zeta\|+\langle \hat{p}_1 a_0,a_0\rangle _+^{\frac{1}{2}})(\|\varphi\|+\langle \hat{p}_1 \varphi^0, \varphi^0 \rangle _+^{\frac{1}{2}})\\
        &\le c(\|\zeta\| + \langle \hat{p}_1 a_0,a_0 \rangle_+^{\frac{1}{2}} ) \| \xi \|. 
    \end{aligned}
    \end{equation}
    Thus it can be concluded that $\mathcal{L}$ is bounded on $\mathcal{U}$, with the bound depending on the boundary data $a_0$ and the given function $\zeta$. 
    In accordance with the Hahn-Banach theorem, there exists an extension $\bar{\mathcal{L}}$ of $\mathcal{L}$ to the space $L^2_{x,p}$ such that for any $\xi\in L^2_{x,p}$, we have that
    \[
        |\bar {\mathcal{L}}(\xi)| \le c\|\xi\|, 
    \]
    with the same bound in \eqref{eq:L}. 
\end{proof}

We are ready to obtain the following existence theorem for the linearized equation (\ref{eq:h:2}) with damping and a given function $\zeta(x,p)$. 
\begin{theorem}\label{thm:lin:existence}
    Suppose $\gamma,\tau$ are suitably small such that Lemma \ref{lem:L:bdd} holds. If the boundary condition $a_0$ and the source term $\zeta$ satisfy
    \[
        \|\zeta\|+\langle \hat{p}_1 a_0, a_0\rangle_+^\frac{1}{2}<\infty,
    \]
    then there exists a unique solution $h\in L^2_{x,p}$ of \eqref{eq:h:2} satisfying
    \begin{equation}\label{eq:L2:estimate}
        \langle |\hat{p}_1| h^0, h^0 \rangle_- + (\nu h,h) + \| \nu^{-\frac{1}{2}} |\hat{p}_1| h_x \|^2 \le c(\|\zeta\|^2+\langle \hat{p}_1 a_0, a_0\rangle_+).
    \end{equation}
    where $h^0 := h\mid_{x=0}$. 
\end{theorem}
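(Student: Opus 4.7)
The plan is to construct the solution via Riesz representation applied to the bounded linear functional $\bar{\mathcal{L}}$ from Lemma \ref{lem:L:bdd}, verify that the resulting $h \in L^2_{x,p}$ solves \eqref{eq:h:2} in the weak sense, and then establish the energy estimate \eqref{eq:L2:estimate}. Uniqueness will follow from the same estimate applied to the difference of two solutions.

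First I would invoke the Riesz representation theorem: since $\bar{\mathcal{L}}$ is a bounded linear functional on the Hilbert space $L^2_{x,p}$, there exists a unique $h \in L^2_{x,p}$ with $(h, \xi) = \bar{\mathcal{L}}(\xi)$ for every $\xi \in L^2_{x,p}$, and $\|h\|$ inherits the bound $c(\|\zeta\| + \langle \hat{p}_1 a_0, a_0\rangle_+^{1/2})$. Specializing $\xi$ to $-\hat{p}_1 \partial_x \varphi - \tau \hat{p}_1 \varphi + \gamma \hat{p}_1 P_0^+ \varphi - L(\varphi)$ for $\varphi \in \mathcal{V}$ reads off the weak formulation of \eqref{eq:h:2}. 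Test functions supported in $(0,\infty) \times \mathbb{R}^3$ give the transport equation in the distributional sense (so that $\hat{p}_1 \partial_x h$ is identified with $L(h) + \tau \hat{p}_1 h - \gamma P_0^+ \hat{p}_1 h + \zeta$), while test functions non-vanishing at $x = 0$ transfer the boundary condition $h^0 = a_0$ on $\{p_1 > 0\}$; the limit at infinity is encoded in the $L^2$ membership combined with the equation.

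Next, I would derive the energy estimate by pairing the equation with $h$ itself in $L^2_{x,p}$ and reproducing the coercivity argument from Lemma \ref{lem:L:bdd}, this time with the roles of $\xi$ and $\varphi$ reversed. Integration by parts in $x$ produces the boundary terms $\tfrac{1}{2}\langle \hat{p}_1 a_0, a_0\rangle_+$ from the outgoing data and $\tfrac{1}{2}\langle |\hat{p}_1| h^0, h^0\rangle_-$ from the incoming trace one wishes to control. Decomposing $h = P_0 h + P_1 h$, using the splitting between $-\tau(\hat{p}_1 h, h)$ and $\gamma(P_0^+ \hat{p}_1 h, h)$ exactly as in \eqref{eq:estimate}, and invoking the coercivity $-(Lh, h) \ge c(\nu P_1 h, P_1 h)$ from Proposition \ref{prop:L}(iii), one bounds $\langle |\hat{p}_1| h^0, h^0\rangle_-$ and $(\nu h, h)$ in terms of $\|\zeta\|^2 + \langle \hat{p}_1 a_0, a_0\rangle_+$ after a Cauchy--Schwarz absorption. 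The remaining term $\|\nu^{-1/2}|\hat{p}_1| h_x\|^2$ is then obtained by solving the equation for $\hat{p}_1 \partial_x h$, applying the triangle inequality with weight $\nu^{-1/2}$, and using the decomposition $L = -\nu + K$ together with the $L^2$-boundedness of $K$ from Proposition \ref{prop:L}(ii). Uniqueness follows immediately: the estimate applied to the difference of two solutions with $\zeta = 0$ and $a_0 = 0$ forces it to vanish.

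The main technical obstacle is justifying the integration by parts and the meaning of the trace $h^0$ when $h$ is only a priori in $L^2_{x,p}$. To handle this I would introduce an approximation scheme, for example by regularizing $\bar{\mathcal{L}}$ via truncation in $p$ (to avoid the singularities of $\nu$) and mollification in $x$, so that the approximate solutions are smooth enough for the formal manipulations. Uniform bounds from the steps above then let one pass to the limit and define the trace via the standard $L^2$ trace theorem for transport operators (the $\nu^{-1/2}|\hat{p}_1| h_x$ control is precisely what is needed). Once this approximation is in place, the remaining computations parallel those already performed in Lemma \ref{lem:L:bdd}, so the bulk of the work reduces to careful bookkeeping.
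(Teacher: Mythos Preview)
Your proposal is correct and follows essentially the same strategy as the paper: Riesz representation for existence, then the energy identity obtained by pairing the equation with $h$. Two minor differences are worth noting. First, to justify the trace $h^0$ and the a.e.\ existence of $h_x$, the paper does not regularize; instead it inserts test functions of the form $\phi=\int_x^\infty\eta(x',p)\,dx'$ with $\int_0^\infty\eta\,dx'=0$ into the weak formulation, which yields directly that $\hat p_1 h(x,p)+\int_0^x(-\tau\hat p_1 h+\gamma P_0^+\hat p_1 h-Lh-\zeta)\,dx'$ depends only on $p$, so $h_x$ and the boundary trace are well defined without any approximation scheme. Second, for the energy estimate the paper does not rerun the macro--micro coercivity argument of Lemma~\ref{lem:L:bdd}; since the Riesz bound already gives $\|h\|\le c(\|\zeta\|+\langle\hat p_1 a_0,a_0\rangle_+^{1/2})$, the cross terms $\tau(\hat p_1 h,h)$ and $\gamma(P_0^+\hat p_1 h,h)$ are simply controlled by $\|h\|^2$ and absorbed, which is a little quicker than your route.
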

\begin{proof}
    For the above functional $\bar {\mathcal{L}}$ on $L^2$, we can apply the Riesz representation theorem and it implies that there exists a unique $h\in L^2_{x,p}$ such that
    \begin{equation}\label{eq:L:tilde}
        \bar {\mathcal{L}}(\xi)=(h,\xi).
    \end{equation}
    This indicates that $h$ is a weak solution to the linear equation \eqref{eq:h:2}. 
        
    Now our objective is to demonstrate that both $h_x$ for almost all $(x, p )$ and the trace of $h$ at $x = 0$ are well-defined. In order to achieve this, a specific family of test functions $\phi$ was chosen for analysis as follows:
    \[
        \phi = \int_{x}^{\infty}\eta(x',p)dx',  
    \]
    where $\eta \in C_0^\infty([0,\infty)\times\mathbb{R}^3)$ satisfies $\int_{0}^{\infty}\eta (x',p)dx' = 0$ for any $p$. Applying this test function to (\ref{eq:h:2}) yields 
    \[
        \Big( \int_{0}^{x}(-\tau\hat{p}_1 h+\gamma P_0^+\hat{p}_1 h-Lh-\zeta)dx'+\hat{p}_1 h, \eta \Big) = 0. 
    \]
    By the choice of our test function $\eta$, we have that
    \[
        \int_{0}^{x}(-\tau\hat{p}_1 h+\gamma P_0^+\hat{p}_1 h-Lh-\zeta)dx'+\hat{p}_1 h = b(p) 
    \]
    holds for almost all $(x, p )$, where $b(p )$ is a function of $p$ only. Therefore, $h_x$ is well-defined for almost all $(x,p)$ and the trace of $h$ at $x = 0$ is well-defined. Hence we have (\ref{eq:h:2}) for almost all $(x, p )$.
        
    By \eqref{eq:L} and \eqref{eq:L:tilde}, we have 
    \begin{equation}\label{eq:h:est}
        \|h\|\le c(\|\zeta\|+\langle \hat{p}_1 a_0, a_0\rangle_+^\frac{1}{2}),
    \end{equation}
    which implies the uniqueness of $h$. So far we know that $h\in L^2_{x,p}$. Since $|\hat{p}_1|$ is bounded, $|\hat{p}_1|^{\frac{1}{2}}h\in L^2_{x,p}$. 
        
    Now we claim that the solution obtained indeed satisfies the given boundary condition. Actually, if we take the inner product of (\ref{eq:h:2}) with $\varphi \in \mathcal{V}$,  we have 
    \[
        (\hat{p}_1 h_x-\tau\hat{p}_1 h-Lh + \gamma P_0^+\hat{p}_1 h,\varphi) = (\zeta,\varphi). 
    \]
    After integration by parts, we have that
    \[
        (h,-\hat{p}_1 \varphi_x-\tau\hat{p}_1 \varphi-L\varphi + \gamma \hat{p}_1 P_0^+ \varphi)=(\zeta,\varphi)+\langle \hat{p}_1 h^0,\varphi^0 \rangle_+. 
    \]
        
    In accordance with the definition of $\xi$ and the premise that $h$ satisfies (\ref{eq:L:tilde}), it holds that
    \[
        (\zeta,\varphi)+\langle \hat{p}_1 h^0,\varphi^0 \rangle_+ =(\xi,h)=\mathcal{L}(\xi)=(\zeta,\varphi)+\langle \hat{p}_1 a_0,\varphi^0 \rangle_+, 
    \]
    which implies 
    \[
        h^0=a_0, \text{ for a.e. } p_1 > 0. 
    \]
    The boundary condition holds. 
        
    Finally, we prove \eqref{eq:L2:estimate}. 
    Applying $h$ to (\ref{eq:h:2}) and then integrating over $(x, p)$ yields that
    \begin{equation}\label{eq:temp}
        (\hat{p}_1 h_x, h) - \tau (\hat{p}_1 h, h) + \gamma ( P_0^+ \hat{p}_1 h, h) - ( L h,h) = ( \zeta, h ).
    \end{equation}
    The first item is 
    \begin{equation}\label{eq:temp:1st:term}
    \begin{aligned}
        (\hat{p}_1 h_x, h) 
        & = \frac{1}{2}\int_{\mathbb{R}^3} \hat{p}_1 h^2|_{x=0}^{x=\infty }dp\\
        & = - \frac{1}{2} \langle \hat{p}_1 h^0, h^0 \rangle \\
        & = - \frac{1}{2} \langle \hat{p}_1 a_0, a_0 \rangle_+ +\frac{1}{2} \langle |\hat{p}_1| h^0, h^0 \rangle_-. 
    \end{aligned}
    \end{equation}
    Substitute \eqref{eq:temp:1st:term} into \eqref{eq:temp}, by $L=-\nu+K$, we have that
    \begin{equation}\label{eq:lem5:temp}
    \begin{aligned}
        \frac{1}{2} \langle |\hat{p}_1| h^0, h^0 \rangle_- - \tau ( \hat{p}_1 h, h) + \gamma (P_0^+\hat{p}_1 h, h) + (\nu h,h) & = (Kh,h) + (\zeta,h) + \frac{1}{2} \langle \hat{p}_1 a_0, a_0 \rangle_+  \\
        & \le c \| h \|^2 + \|\zeta\| \|h\| + \frac{1}{2} \langle \hat{p}_1 a_0, a_0 \rangle_+ \\
        &\le c ( \|\zeta\|^2 + \|h\|^2 + \langle \hat{p}_1 a_0, a_0 \rangle_+ ). 
    \end{aligned}
    \end{equation}

    Notice that 
    \[
        | ( \hat{p}_1 h, h) | \le \|h\|^2, 
    \]
    \[
        | ( P_0^+\hat{p}_1 h, h) | = | (\hat{p}_1 h, P_0^+ h ) | \le \| \hat{p}_1 h \| \cdot \| h \| \le \| h \|^2.
    \] 
    We know from the above estimate, \eqref{eq:lem5:temp} and \eqref{eq:h:est} that 
    \begin{equation}\label{eq:nu:h}
        \langle | \hat{p}_1 | h^0, h^0 \rangle_- + ( \nu h, h) \le c(\|\zeta\|^2 + \langle \hat{p}_1 a_0, a_0 \rangle_+).
    \end{equation}

    By \eqref{eq:h:2}, \eqref{eq:nu:h}, we have that 
    \[
        \begin{aligned}
            \|\nu^{-\frac{1}{2}}|\hat{p}_1| h_x\|^2 & \le \|\nu^{-\frac{1}{2}}\tau \hat{p}_1h\|^2 + \|\nu^{-\frac{1}{2}}Lh\|^2 + \|\nu^{-\frac{1}{2}}\zeta\|^2 + \|\nu^{-\frac{1}{2}}\gamma P_0^+ \hat{p}_1 h\|^2\\
            & \le c (\|h\|^2+\|\zeta\|^2+\|\nu^{-\frac{1}{2}}\nu h\|^2+\|Kh\|^2 +  \|\hat{p}_1 h\|^2)\\
            & \le c (\|h\|^2+\|\zeta\|^2+\|\nu^{\frac{1}{2}}h\|^2)\\
            & \le c(\|\zeta\|^2 + \langle \hat{p}_1 a_0, a_0 \rangle_+).
        \end{aligned}    
    \]
    Therefore the unique solution $h$ satisfies \eqref{eq:L2:estimate}. 
\end{proof}

\section{Weighted estimates for the linearized problem with damping}\label{sec:we}

In order to obtain the existence of solutions to the nonlinear problem, we need to estimate $\|\cdot\|_{L^\infty_{x,p,\beta}} $ norm of the solution for the linear problem. This section is devoted to achieve the estimate. 

\begin{lemma}\label{lem:estimate:f}
    Assume $f(x)\in L^2([0,\infty))$ and $f \rightarrow 0$ as $x \rightarrow \infty$, the derivative $f_x$ is defined for almost all $x$ and $f_x\in L^2([0,\infty))$. Then $f\in L^\infty([0,\infty))$ and
    \[
        \|f\|^2_{L^\infty([0,\infty))}\le c\|f\|_{L^2([0,\infty))} \ \|f_x\|_{L^2([0,\infty))},
    \]
    where $c$ is a constant independent of $f$. 
\end{lemma}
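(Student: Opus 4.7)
The plan is to exploit the fundamental theorem of calculus together with the decay at infinity, reducing the $L^\infty$ bound to an $L^2$ bound on $f f_x$ via Cauchy–Schwarz. This is a standard one-dimensional Gagliardo–Nirenberg/Sobolev type interpolation.

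First, I would observe that since $f, f_x \in L^2([0,\infty))$ (with $f_x$ defined a.e.), the product $f f_x$ lies in $L^1([0,\infty))$ by Cauchy–Schwarz, so the function $g(x) := f(x)^2$ is absolutely continuous on $[0,\infty)$ with $g'(x) = 2 f(x) f_x(x)$ a.e. Using the hypothesis $f(x) \to 0$ as $x \to \infty$, I can write, for any $x \ge 0$,
\begin{equation*}
    f(x)^2 = -\int_x^\infty \frac{d}{dy} f(y)^2 \, dy = -2 \int_x^\infty f(y) f_y(y) \, dy.
\end{equation*}

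Next, I would apply the Cauchy–Schwarz inequality on the half-line $[x,\infty) \subset [0,\infty)$ to obtain
\begin{equation*}
    f(x)^2 \le 2 \int_0^\infty |f(y)| |f_y(y)| \, dy \le 2 \|f\|_{L^2([0,\infty))} \|f_x\|_{L^2([0,\infty))}.
\end{equation*}
Since the right-hand side is independent of $x$, taking the supremum over $x \in [0,\infty)$ yields
\begin{equation*}
    \|f\|_{L^\infty([0,\infty))}^2 \le 2 \|f\|_{L^2([0,\infty))} \|f_x\|_{L^2([0,\infty))},
\end{equation*}
which is the desired estimate with $c = 2$.

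There is no substantive obstacle here; the only mild technical point worth noting explicitly is the justification that $f^2$ is absolutely continuous with derivative $2 f f_x$, given only that $f_x$ is defined a.e. and lies in $L^2$. In the setting where this lemma is applied (the solutions constructed in Theorem \ref{thm:lin:existence} have $\nu^{-\frac{1}{2}} |\hat{p}_1| h_x \in L^2_{x,p}$, so after fixing $p$ in the appropriate set one has $h_x(\cdot, p) \in L^2_x$ and $h(\cdot,p)$ equal a.e. to its absolutely continuous representative), this regularity is implicit, so the chain rule and fundamental theorem of calculus apply without issue.
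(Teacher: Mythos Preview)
Your proof is correct and follows essentially the same route as the paper: write $f(y)^2 = -\int_y^\infty (f^2)_x\,dx = -2\int_y^\infty f f_x\,dx$ using the decay at infinity, then apply Cauchy--Schwarz. Your version is in fact slightly more careful in justifying the absolute continuity of $f^2$, which the paper leaves implicit.
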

\begin{proof}
    Direct computation gives that for almost every $y>0$, 
    \[
        |f|^2(y) = | \int_y^\infty(f^2)_x dx | = 2 \Big| \int_y^\infty f f_x dx \Big| \le c\|f\|_{L^2 ( [0, \infty) ) } \|f_x\|_{L^2([0,\infty) )}. 
    \]
    The lemma holds. 
\end{proof}

We rewrite equation \eqref{eq:h:2} as
\[
    \left\{ 
    \begin{aligned}
        & h_x= \tau h+\frac{1}{\hat{p}_1}Lh+\frac{1}{\hat{p}_1}\zeta-\frac{\gamma}{\hat{p}_1}P_0^+\hat{p}_1 h = (\tau-\frac{\nu(p)}{\hat{p}_1})h+\frac{1}{\hat{p}_1}(\bar{K}h+\zeta),\\
        & h|_{x=0}= a_0 ( p ),  \qquad p_1>0,( p_2,p_3 ) \in \mathbb{R}^2,\\
        \displaystyle & \lim_{x\to +\infty} h = 0, \qquad p\in \mathbb{R}^3,
    \end{aligned} 
    \right.
\]
where
\[
    \bar{K} := K - \gamma P_0^+ \hat{p}_1.
\]
By Proposition \ref{prop:L}, $K$ is a compact operator and is bounded from $L^\infty_{p,\beta}$ to $L^\infty_{p,\beta+\eta}$ for all $\beta \ge0$ and $\eta\in(0,1]$ given by \eqref{eq:eta}, and bounded from $L^2_p$ to $L^\infty_p$. Since the operator $P_0^+ \hat{p}_1$ is a mapping onto a subspace of $\mathcal{N}$, which is of finite dimensions, the operator $\bar{K}$ has the same properties as $K$. 

Let
\[
    \kappa(x,p) := ( - \tau + \frac{\nu(p)}{\hat{p}_1})x.
\]
It is obvious that $\kappa(x, p ) > 0$ for $x \hat{p}_1 > 0$ when $\tau > 0$ is sufficiently small. Therefore, the solution $h$ can be formally written as follows:
\begin{equation}\label{def:sol:h}
    h=\tilde{a}+U(\bar{K}h+\zeta), 
\end{equation}
where 
\[
    \tilde{a} := 
    \left\{ 
    \begin{array}{ll}
        e^{-\kappa(x,p)}a_0(p), \quad & \hat{p}_1 >0,\\
        0,\quad & \hat{p}_1 < 0, 
    \end{array} 
    \right. 
\]
\[
    U(\zeta) := 
    \left\{ 
    \begin{array}{ll}
        \int_{0}^{x}e^{-\kappa(x-x',p)}\frac{1}{\hat{p}_1}\zeta(x',p)dx', \quad & \hat{p}_1 >0,\\
        - \int_{x}^{\infty}e^{-\kappa(x-x',p)}\frac{1}{\hat{p}_1}\zeta(x',p)dx',\quad & \hat{p}_1 < 0.
    \end{array} 
    \right. 
\]

\begin{lemma}\label{lem:U}
    The operator $U$ has the following properties:
    \[
    \begin{aligned}
        &\| U(\zeta) \|_{L^q_{x}}\le c\nu(p)^{-1}\| \zeta \|_{L^q_{x}}, \\
        &\| U(\zeta) \|_{L^r_{p,\beta}(L^q_{x})}\le c\|\nu(p)^{-1} \zeta \|_{L^r_{p,\beta }(L^q_{x})}, 
    \end{aligned} 
    \]
    where $1 \le q,r \le \infty$, $\beta\in\mathbb{R}$. 
\end{lemma}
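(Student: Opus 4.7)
The plan is to reduce both inequalities to a one-dimensional convolution estimate in the $x$-variable, and then invoke Young's convolution inequality separately for each fixed $p$. The weighted $L^r_{p,\beta}$ estimate will then follow by simply applying the weighted $L^r_p$ norm to the pointwise-in-$p$ $L^q_x$ bound.

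First, I would fix $p \in \mathbb{R}^3$ with $\hat{p}_1 \neq 0$ and rewrite $U(\zeta)(\cdot, p)$ as a convolution on $\mathbb{R}$. Setting $y := x - x'$ when $\hat{p}_1 > 0$ and $y := x' - x$ when $\hat{p}_1 < 0$, one sees that
\[
    |U(\zeta)(x,p)| \le \int_0^\infty G_p(y)\,|\zeta(x \pm y, p)|\, dy,
\]
where
\[
    G_p(y) := \frac{1}{|\hat{p}_1|} \exp\!\Big(-y\Big(\frac{\nu(p)}{|\hat{p}_1|} - \operatorname{sgn}(\hat{p}_1)\tau\Big)\Big), \qquad y > 0.
\]
The first step of substance is to bound $\|G_p\|_{L^1_{y}}$. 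Direct integration yields
\[
    \|G_p\|_{L^1_y} = \frac{1}{\nu(p) - \operatorname{sgn}(\hat{p}_1)\tau \hat{p}_1}.
\]
By Proposition \ref{prop:L}(i), $\nu(p) \ge \nu_0^{-1} p_0^{a/2} \ge \nu_0^{-1} \mathfrak{c}^{a/2}$, and since $|\hat{p}_1| \le \mathfrak{c}$, choosing $\tau$ sufficiently small (independent of $p$) ensures $|\tau \hat{p}_1| \le \tfrac{1}{2}\nu(p)$, so that $\|G_p\|_{L^1_y} \le c\,\nu(p)^{-1}$.

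With this kernel estimate in hand, Young's inequality on $\mathbb{R}$ (in the $x$-variable only, with $p$ frozen) immediately gives, for any $1 \le q \le \infty$,
\[
    \|U(\zeta)(\cdot, p)\|_{L^q_x} \le \|G_p\|_{L^1_y}\,\|\zeta(\cdot, p)\|_{L^q_x} \le c\,\nu(p)^{-1}\|\zeta(\cdot, p)\|_{L^q_x},
\]
which is the first claimed inequality. For the second, multiply both sides by $p_0^{\beta}$ and take the $L^r_p$ norm; since the factor $\nu(p)^{-1}$ is a function of $p$ alone, it can be absorbed into $\zeta$ on the right, producing
\[
    \|U(\zeta)\|_{L^r_{p,\beta}(L^q_x)} \le c\,\|\nu(p)^{-1}\zeta\|_{L^r_{p,\beta}(L^q_x)}.
\]

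There is no real obstacle beyond careful bookkeeping of signs in the two cases $\hat{p}_1 > 0$ and $\hat{p}_1 < 0$ and verifying that the exponent of $G_p$ is indeed negative so Young's inequality applies; this is the only place where the smallness of $\tau$ (relative to the uniform lower bound of $\nu$) is used.
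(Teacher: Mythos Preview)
Your proposal is correct and follows essentially the same approach as the paper: both show that the integral kernel has $L^1_x$-norm bounded by $c\,\nu(p)^{-1}$ and then pass to $L^q_x$. The paper carries out the Hölder-plus-Fubini computation by hand (treating $q=\infty$ and $q<\infty$ separately), whereas you package the same computation as Young's convolution inequality; there is also a harmless sign slip in your formula for $\|G_p\|_{L^1}$ when $\hat{p}_1<0$ (the denominator should be $\nu(p)+\tau|\hat{p}_1|$ there), but the bound $\le c\,\nu(p)^{-1}$ is unaffected.
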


\begin{proof}
    We rewrite $U(\zeta)$ as 
    \[
        U(\zeta) = \int_{I_{\hat{p}_1}}S(x-x',p) \zeta(x',p) dx',
    \]
    where
    \[
        I_{\hat{p}_1} := 
            \left\{ 
            \begin{aligned}
                & [ 0, x ],  \quad && \hat{p}_1 > 0, \\
                & [ x, \infty], && \hat{p}_1 < 0, 
            \end{aligned} 
            \right. 
    \]
    and 
    \[
        S(x'',p) := 
            \left\{ 
            \begin{array}{ll}
                e^{(\tau-\frac{\nu(p)}{\hat{p}_1})x''}\frac{1}{\hat{p}_1},  \quad & \hat{p}_1 >0, \\
                -e^{(\tau-\frac{\nu(p)}{\hat{p}_1})x''}\frac{1}{\hat{p}_1},   & \hat{p}_1 < 0.
            \end{array} 
            \right. 
    \]
    Then we have
    \[
            \int_{I_{\hat{p}_1}}|S(x-x',p)|dx' =
            \left\{ 
            \begin{array}{ll}
                \int_{0}^{x} e^{ ( \tau - \frac{\nu(p) }{ \hat{p}_1} ) x'' } \frac{1}{\hat{p}_1}dx''\le \frac{1}{\nu(p)-\tau\hat{p}_1}, \quad & \hat{p}_1 >0,\\
                \int_{0}^{\infty} e^{ ( - \tau + \frac{\nu(p) }{ \hat{p}_1} ) x'' } \frac{1}{|\hat{p}_1|}dx''= \frac{1}{\nu(p)+\tau|\hat{p}_1|}, \quad & \hat{p}_1 < 0. 
            \end{array} 
            \right.
    \]
    Therefore, for sufficiently small $\tau>0$, it holds that
    \[
        \int_{I_{\hat{p}_1}} | S(x-x',p) | dx' \le \frac{c}{\nu(p)}. 
    \]

    Now we estimate $U (\zeta)$. For $q=\infty$,
    \begin{equation}\label{eq:U:infty}
        \begin{aligned}
            |U(\zeta)(x,p)| & \le \int_{I_{\hat{p}_1}}|S(x-x',p)| \cdot | \zeta(x',p)|dx'  \\     
            & \le \underset{x\in \mathbb{R}^+}{\sup}|\zeta(x,p)|\int_{I_{\hat{p}_1}}|S(x-x',p)| dx'\\
            & \le \frac{c}{\nu(p)} \|\zeta\|_{L_x^\infty}.
        \end{aligned} 
    \end{equation}
    For $1 \le q< \infty$ and $\frac{1}{q}+\frac{1}{q'}=1$, we have that
    \[
        |U(\zeta)(x,p)| \le \Big( \int_{I_{\hat{p}_1}}|S(x-x',p)|dx' \Big)^\frac{1}{q'} \Big( \int_{I_{\hat{p}_1}}|S(x-x',p)| \cdot | \zeta(x',p)|^q dx' \Big) ^\frac{1}{q}. 
    \]
    Then it holds that
    \begin{equation}\label{eq:U:le:infty}
        \begin{aligned}
            \int_{0}^{\infty} | U(\zeta(x,p)) |^q dx & \le c \nu(p)^{-\frac{q}{q'}} \int_{0}^{\infty} \int_{I_{\hat{p}_1}} | S(x-x',p) | \cdot |\zeta(x',p)|^q dx' dx \\
            & \le c \nu(p)^{-\frac{q}{q'}} \int_{0}^{\infty} \int_{I_{\hat{p}_1}'} | S(x-x',p) | \cdot | \zeta(x',p) |^q dx dx' \\
            & \le c \nu(p)^{-\frac{q}{q'}-1}\int_{0}^{\infty}|\zeta(x',p)|^qdx', 
        \end{aligned} 
    \end{equation}
    where
    \[
        I_{\hat{p}_1}' := 
            \left\{ 
            \begin{array}{ll}
                [ x', \infty ),  \quad & \hat{p}_1 >0, \\
                ( 0, x' ],  & \hat{p}_1 < 0. 
            \end{array} 
            \right. 
    \]

    In the derivation of \eqref{eq:U:le:infty}, we used the estimate
    \[
            \int_{I_{\hat{p}_1}'}|S(x-x',p)|dx =
            \left\{ 
            \begin{array}{ll}
                \int_{0}^{\infty} e^{ ( \tau - \frac{\nu(p) }{ \hat{p}_1} ) x'' } \frac{1}{\hat{p}_1}dx''= \frac{1}{\nu(p)-\tau\hat{p}_1} , \quad & \hat{p}_1 >0,\\
                 \int_{-x'}^{0} e^{ ( \tau - \frac{\nu(p) }{ \hat{p}_1} ) x'' } \frac{1}{|\hat{p}_1|}dx''\le \frac{1}{\nu(p)+\tau|\hat{p}_1|}, \quad & \hat{p}_1 < 0, 
            \end{array} 
            \right.
    \]
    which implies for sufficiently small $\tau>0$ that
    \[
        \int_{I_{\hat{p}_1}'} | S(x-x',p) | dx \le \frac{c}{\nu(p)}. 
    \]

    By \eqref{eq:U:infty} and \eqref{eq:U:le:infty}, it yields the first estimate of the lemma. The second estimate is a direct consequence of the first one. 
\end{proof}

Set
\begin{equation}\label{eq:w}
    w_{-\varrho,\alpha} := \left\{ 
        \begin{array}{ll}
        | p_1 |^{-\varrho}p_0^{\alpha}, \quad & |p_1| <1,\\
        p_0^{\alpha},\quad & |p_1| \ge 1. 
        \end{array} 
    \right. 
\end{equation}
\begin{lemma} \label{lem:bar:a:bounded}
    For $0<\varrho<1$, it holds for some constant $c$ depending only on $\varrho$ that 
    \[
        \int_{-\infty}^{+\infty} \frac{ |s|^{-\varrho} }{|s-a| + 1} ds<c.
    \]
\end{lemma}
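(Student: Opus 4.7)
The integrand has an integrable singularity at $s=0$ (since $\varrho<1$) and a tail that is integrable at $\pm\infty$ (since $\varrho>0$); the real content of the lemma is the uniformity of the bound in $a$. My plan is to reduce to $a\ge 0$ by the symmetry $s\mapsto -s$, and then partition $\mathbb{R}$ into regions where at least one of the two factors is controlled.

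For small $|a|$ (say $|a|\le 2$), I would split $\mathbb{R}$ into $\{|s|\le 4\}$ and $\{|s|>4\}$. On the first piece the factor $|s-a|+1\ge 1$ can simply be dropped, and $\int_{|s|\le 4}|s|^{-\varrho}\,ds\le 2\cdot 4^{1-\varrho}/(1-\varrho)$. On the second piece $|s-a|\ge |s|-|a|\ge|s|/2$, so the integrand is dominated by $2|s|^{-1-\varrho}$, which is integrable at $\pm\infty$.

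For $|a|>2$ I would partition $\mathbb{R}$ by the size of $|s|$ relative to $|a|$ into the three regions $|s|<|a|/2$, $|a|/2\le|s|\le 2|a|$, and $|s|>2|a|$. In the outer regions the argument is routine: $|s|<|a|/2$ forces $|s-a|>|a|/2$, giving an $O(|a|^{-1})$ factor from the denominator that combines with $\int_{|s|<|a|/2}|s|^{-\varrho}\,ds=O(|a|^{1-\varrho})$ to produce an $O(|a|^{-\varrho})$ contribution; likewise for $|s|>2|a|$ one has $|s-a|\ge|s|/2$ and the integrand is dominated by $2|s|^{-1-\varrho}$, yielding $O(|a|^{-\varrho})$. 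The main obstacle, and what motivates the particular choice of threshold, is the middle band $|a|/2\le|s|\le 2|a|$, where neither the weight nor the denominator is small. There I would use $|s|^{-\varrho}\le(|a|/2)^{-\varrho}$ and the substitution $t=s-a$, reducing the problem to an integral of $(|t|+1)^{-1}$ over a set of measure $O(|a|)$, which is $O(\log|a|)$. The resulting bound $|a|^{-\varrho}\log|a|$ is uniformly bounded for $|a|\ge 2$ and decays as $|a|\to\infty$, so this region also contributes a constant depending only on $\varrho$.

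A slicker alternative would be to view the integral as the convolution $(f\ast g)(a)$ with $f(s)=|s|^{-\varrho}$ and $g(s)=(|s|+1)^{-1}$, decompose $f=f_1+f_2$ according to $|s|<1$ versus $|s|\ge 1$, and apply Young's inequality: $\|f_1\ast g\|_\infty\le\|f_1\|_1\|g\|_\infty$, and $\|f_2\ast g\|_\infty\le\|f_2\|_p\|g\|_q$ for any conjugate pair with $p>1/\varrho$ (so that $f_2\in L^p$) and correspondingly $q<1/(1-\varrho)$ (so that $g\in L^q$). Both conditions are simultaneously satisfiable precisely because $0<\varrho<1$, and the resulting bound depends only on $\varrho$.
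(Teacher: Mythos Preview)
Your direct argument is correct and is essentially the paper's own proof: the paper also reduces to $a>0$, handles small $a$ trivially, and for large $a$ partitions the real line according to the position of $s$ relative to $a/2$, $a$, $2a$ (five pieces rather than your three, because it separates the negative axis and splits the middle band at $s=a$), obtaining the same $|a|^{-\varrho}\log|a|$ contribution from the region near $s=a$.

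Your convolution alternative via Young's inequality is a genuinely different route not in the paper. It trades the explicit case analysis for a one-line structural observation: the integral is $(f\ast g)(a)$ with $f\in L^1+L^p$ and $g\in L^\infty\cap L^q$, and the exponent window $1<q<1/(1-\varrho)$ is nonempty precisely when $0<\varrho<1$. One small wording issue: the parenthetical ``so that $g\in L^q$'' attached to $q<1/(1-\varrho)$ is misleading, since $g=(|s|+1)^{-1}$ lies in $L^q$ for every $q>1$; the upper bound on $q$ comes from the conjugacy constraint $p>1/\varrho$, not from integrability of $g$. The argument itself is unaffected. This approach is cleaner and immediately shows the bound depends only on $\varrho$, whereas the paper's hands-on splitting makes the decay in $|a|$ explicit (each piece is $O(|a|^{-\varrho})$ or $O(|a|^{-\varrho}\log|a|)$), which is occasionally useful but not needed for the lemma as stated.
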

\begin{proof}
    Without loss of generality, we assume that $a>0$. For $a>0$ small, say $0< a < 1/2$, it is easy to obtain the result.
    For large $|a|$, we separate the integral region $(-\infty,\infty)$ as
    \[
    \begin{aligned}
        \int_{-\infty}^{+\infty} \frac{ |s|^{-\varrho} }{|s-a| + 1} ds = \int_{-\infty}^{0} + \int_{0}^{\frac{a}{2}} +\int_{\frac{a}{2}}^{a}+\int_{a}^{2a}+\int_{2a}^{+\infty} =: R_1+R_2+R_3+R_4+R_5.
    \end{aligned}  
    \]
    Obviously, $R_1$ is bounded since $0<\varrho<1$, and it holds as $a \to +\infty$ that
    \[
    \begin{aligned}
        R_2 & \le c\int_{0}^{\frac{a}{2}} \frac{ |s|^{-\varrho} }{ \frac{a}{2} + 1} ds \le c\frac{a^{1-\varrho}}{a+1} \rightarrow 0,\\
        R_3 & \le c\int_{\frac{a}{2}}^{a} \frac{ a^{-\varrho} }{a-s+1} ds = c a^{-\varrho}\ln(\frac{a}{2}+1)\rightarrow 0,\\
        R_4 & \le c\int_{a}^{2a} \frac{ a^{-\varrho} }{s-a+1} ds = c a^{-\varrho}\ln(a+1)\rightarrow 0,\\
        R_5 & \le c\int_{2a}^{+\infty} \frac{1}{(s-a+1)^{1+\varrho}} ds = c (a+1)^{-\varrho}\rightarrow 0.
    \end{aligned}  
    \]
    The lemma is proved.
\end{proof}

\begin{lemma}\label{lem:k:weight}
    For any $0<\varrho<1$ and $0\le\alpha\le\eta$, it holds that
    \begin{equation}
        \int |k(p,q)|w_{-\varrho,\alpha}dp \le c,
    \end{equation}
    where $\eta$ is defined in \eqref{eq:eta}.
\end{lemma}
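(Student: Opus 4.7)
The plan is to split the integration domain according to the piecewise definition of $w_{-\varrho,\alpha}$ in \eqref{eq:w}:
\[
    \int |k(p,q)| w_{-\varrho,\alpha}(p) \, dp = \int_{|p_1|\ge 1} |k(p,q)| p_0^\alpha \, dp + \int_{|p_1|<1} |k(p,q)| |p_1|^{-\varrho} p_0^\alpha \, dp =: J_1 + J_2.
\]
The first piece $J_1$ is handled immediately by Proposition \ref{prop:k:weight}(iv): since $0 \le \alpha \le \eta$, we get $J_1 \le \int |k(p,q)| p_0^\alpha dp \le c(\mathfrak{c}^2 + |q|^2)^{(\alpha-\eta)/2} \le c$, uniformly in $q$. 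All the work therefore goes into $J_2$, where the pointwise singularity $|p_1|^{-\varrho}$ at $p_1 = 0$ is not controlled by any bound proved so far.

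For $J_2$ I would plug in the explicit kernel bound \eqref{eq:k:es} and use $p_0^\alpha \le c(\mathfrak{c}+|q|)^\alpha + c|p-q|^\alpha$, absorbing the $|p-q|^\alpha$ piece into the factor $e^{-c\mathfrak{c}|p-q|/T_\infty}$. After passing to cylindrical/polar coordinates centered at $q$ and aligned with it (as in the proof of Proposition \ref{prop:k:weight}(iv)), the angular integral against $1/\sqrt{|q|^2 \sin^2\theta + \mathfrak{c}^2}$ produces a factor comparable to $(\arctan|q|)/|q|$, which combined with the prefactor $(\mathfrak{c}+|q|)^{\alpha + (3|\varsigma|+a+2b)/2} = (\mathfrak{c}+|q|)^{\alpha + 1 - \eta}$ leaves an overall $q$-factor bounded by $c(\mathfrak{c}+|q|)^{\alpha-\eta}$, which is uniformly bounded by the hypothesis $\alpha \le \eta$.

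The remaining step, and the main technical obstacle, is the $p_1$-integration: after the transverse integration one should reduce to an integrand of the form $|p_1|^{-\varrho}/(|p_1 - q_1| + 1)$ multiplied by a $q$-uniformly bounded factor. Lemma \ref{lem:bar:a:bounded} with $a = q_1$ then yields a bound independent of $q$, since $0 < \varrho < 1$. Producing this $(|p_1 - q_1|+1)^{-1}$ decay with the correct prefactor will likely require splitting the $p$-domain into $|p| < 2|q|$ and $|p| > 2|q|$, as in the proof of Proposition \ref{prop:k:weight}(iv), so that the $|p\times q|$ piece and the $\mathfrak{c}|p-q|$ piece of the denominator can each be used where they dominate, combined with the exponential decay in the radial direction. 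The delicate bookkeeping here, especially showing that the $(p_2,p_3)$-integration does not leave a power of $(|p_1-q_1|+1)$ weaker than $-1$, is what I expect to be the hardest part of the argument.
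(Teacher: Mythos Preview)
Your split $J_1+J_2$ and the treatment of $J_1$ via Proposition~\ref{prop:k:weight}(iv) match the paper exactly, and you are right that Lemma~\ref{lem:bar:a:bounded} is the tool that handles the $|p_1|^{-\varrho}$ singularity in $J_2$.

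Where the sketch has a genuine gap is the order of integration in $J_2$. You propose doing the transverse (angular) integration first, in spherical coordinates with polar axis along $q$ as in the proof of Proposition~\ref{prop:k:weight}(iv), and claim the remaining one-dimensional integral then has the form $\int |p_1|^{-\varrho}/(|p_1-q_1|+1)\,dp_1$. But in those coordinates $p_1$ is neither the radial nor an angular variable: $p_1-q_1$ is the projection of $p-q$ onto $e_1$, which is generally oblique to $q$, so the factor $|p_1|^{-\varrho}$ does not pass through the $(\theta,\psi)$-integration, and the claimed reduction is not justified. The ``delicate bookkeeping'' you flag is precisely this coordinate mismatch, and I do not see how to repair it without changing strategy.

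The paper reverses the order. Working in $(p_1,p')$ with $p'=(p_2,p_3)$, one completes the square in $p_1$,
\[
|p\times q|^2+\mathfrak{c}^2|p-q|^2 = m^2(p_1+\tilde a)^2+\tilde n^2,
\]
with $m,\tilde a,\tilde n$ depending only on $p',q$, and integrates in $p_1$ \emph{first}: after the scaling $s=(m/\tilde n)p_1$ this is exactly the integral in Lemma~\ref{lem:bar:a:bounded}, giving a factor $m^{\varrho-1}\tilde n^{-\varrho}$ uniformly in the shift. Only then is the two-dimensional $p'$-integral taken, in polar coordinates centred at $q'$, where $\tilde n^{-\varrho}$ produces an integrable $|\cos\theta|^{-\varrho}$ singularity and $m^{\varrho-1}$ supplies the decay $(\mathfrak{c}+|q'|)^{\varrho-1}$. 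A further case split on $q$ (essentially $q=0$; $q_1=0$; and $q_1\ne0$ with $|q_1|$ large versus $|q_1|\lesssim|q'|$) is needed to keep the prefactors from \eqref{eq:k:es} and the quadratic decomposition under control; your $(\mathfrak{c}+|q|)^{\alpha-\eta}$ bookkeeping survives this, but it is the $p'$-integral, not the $p_1$-integral, that carries the $(\mathfrak{c}+|q|)^{-1}$ gain.
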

\begin{proof}
    Over the region $|p_1|\ge 1$, we have from (iv) in Proposition \ref{prop:k:weight} that
    \[
    \begin{aligned}
        \int_{|p_1|\ge 1} |k(p,q)|w_{-\varrho,\alpha}dp \le \int |k(p,q)| ( \mathfrak{c}^2 + |p|^2)^{\frac{\alpha}{2}} dp \le c ( \mathfrak{c}^2 + |q|^2)^{\frac{\alpha-\eta}{2}} \le c, 
    \end{aligned}
    \]
    if $\alpha \le \eta$.
    
    It only remains to compute the integral over $|p_1|<1$. We separate the cases based on the value of $q$. Let $p' := (p_2,p_3)$, $q' := (q_2,q_3)$, then $|p|^2 = p_1^2 + |p'|^2$ and $|q|^2 = q_1^2 + |q'|^2$.
    
    {\it Case 1: $q=0$. } From \eqref{eq:k:es} and a scaling transform $s := p_1/|p'|$, we have that
    \[
    \begin{aligned}
        \int_{|p_1|<1} |k(p,q)|w_{-\varrho,\alpha}dp \le & c \int_{|p_1|<1} \frac{(\mathfrak{c}+|p|)^{1-\eta+\alpha} e^{-\frac{c\mathfrak{c}|p|}{T_\infty}}}{|p|^{1+b+|\varsigma|}}|p_1|^{-\varrho} dp\\
        \le & c \int_0^{+\infty} e^{-\frac{c\mathfrak{c}|p'|}{T_\infty}} (c+|p'|^{1-\eta+\alpha}) |p'| \Big( \int_{\mathbb{R}} \frac{ |p_1|^{-\varrho} }{ (|p_1|^2+|p'|^2)^{\frac{1+b+|\varsigma|}{2}} } dp_1 \Big) d|p'| \\
        \le & c \int_0^{+\infty} e^{-\frac{c\mathfrak{c}|p'|}{T_\infty}} (c+|p'|^{1-\eta+\alpha}) |p'|^{1 -\varrho-b-|\varsigma|} \Big( \int_{\mathbb{R}} \frac{ |s|^{-\varrho} }{ (|s|^2 + 1 )^{\frac{1+b+|\varsigma|}{2}} } ds \Big) d|p'| \\
        \le & c \int |p'|^{1-\varrho-b-|\varsigma|} e^{-\frac{c\mathfrak{c}|p'|}{T_\infty}} d|p'| + c \int |p'|^{2-\varrho-b-|\varsigma|-\eta+\alpha} e^{-\frac{c\mathfrak{c}|p'|}{T_\infty}} d|p'|\\
        \le & c,
    \end{aligned}  
    \]
    since $0<\varrho<1$, $0\le\alpha\le\eta$ and $b+|\varsigma| \in (0,\frac{1}{2})$.

    {\it Case 2: $q\not=0$, $q_1 = 0$.}  We have that
    \[
        \int_{|p_1|<1} |k(p,q)|w_{-\varrho,\alpha}dp = \int_{\substack{|p_1|<1,\\|p'| \ge 2|q|}} |k(p,q)|w_{-\varrho,\alpha}dp + \int_{\substack{|p_1|<1,\\|p'|<2|q|}} |k(p,q)|w_{-\varrho,\alpha}dp =: I_1 +I_2.
    \]
    
    For $I_1$, $2|q| \le |p'|$, we have $\frac{|p|}{2} \le |p-q| \le \frac{3}{2}|p|$, then $|p|\sim|p-q|$. It holds from \eqref{eq:k:es} and the proving process in case 1 that
    \[
        I_1 \le c \int_{\substack{|p_1|<1,\\ |p'| \ge 2|q|}} \frac{(\mathfrak{c}+|p|)^{1-\eta+\alpha} e^{-\frac{c\mathfrak{c}|p|}{T_\infty}}}{|p|^{1+b+|\varsigma|}}|p_1|^{-\varrho} dp \le c,    
    \]
    since $0<\varrho<1$, $0\le\alpha\le\eta$ and $b+|\varsigma| \in (0,\frac{1}{2})$.

    For $I_2$, since $|p_1|\le 1$, $|p'|< 2|q|$ and $|q|=|q'|$, we have that $\mathfrak{c}+|p| \le c(\mathfrak{c}+|p'|) \le c(\mathfrak{c}+|q'|)$. Direct computation gives that
    \[
        (p \times q)^2 + \mathfrak{c}^2 |p-q|^2 = p_1^2(\mathfrak{c}^2+|q'|^2) + \mathfrak{c}^2 |p'-q'|^2 + \big( (p_2-q_2) q_3 - (p_3-q_3) q_2 \big)^2 =: p_1^2 m^2 + n^2, 
    \]
    where $m,n>0$, $m^2 := \mathfrak{c}^2+|q'|^2$, and 
    \[
    \begin{aligned}
        n^2 := & \mathfrak{c}^2 |p'-q'|^2 + \big( (p_2-q_2) q_3 - (p_3-q_3) q_2 \big)^2 \\
        = & \mathfrak{c}^2 |p'-q'|^2 + | (p'-q') \cdot l |^2 \\
        = & \mathfrak{c}^2 |p'-q'|^2 + | (p'-q') |^2 | q' |^2 \cos^2\theta, 
    \end{aligned}
    \]
    and $l = (q_3, -q_2)$, $\theta$ is the angle between vectors $(p'-q')$ and $l$.  

    Let $p_1 = \frac{n}{m}s$. By \eqref{eq:k:es}, it holds by a scaling transform that
    \[
    \begin{aligned}
        I_2 & \le c(\mathfrak{c}+|q'|)^{1-\eta+\alpha} \int_{\substack{|p_1|<1,\\ |p'| < 2|q|}} \frac{ e^{ - \frac{c\mathfrak{c}|p'-q'| }{ T_\infty } } |p_1|^{-\varrho} }{ (p_1^2m^2 + n^2)^{\frac{1}{2}} |p'-q'|^{b+|\varsigma|}} dp \\
        & \le c(\mathfrak{c}+|q'|)^{1-\eta+\alpha} \int_{|p'| < 2|q|} \frac{e^{-\frac{c\mathfrak{c}|p'-q'|}{T_\infty}}}{|p'-q'|^{b+|\varsigma|}} \Big( \int_{\mathbb{R}} \frac{ |p_1|^{-\varrho} }{ ( p_1^2 m^2 + n^2 )^{ \frac{1}{2} } } dp_1 \Big) dp_2 dp_3 \\
        & \le c(\mathfrak{c}+|q'|)^{1-\eta+\alpha} \int_{|p'| < 2|q|} \frac{e^{-\frac{c\mathfrak{c}|p'-q'|}{T_\infty}} m^{\varrho-1} n^{-\varrho} }{ |p'-q'|^{b+|\varsigma|}} \Big( \int_{0}^{+\infty} \frac{ |s|^{-\varrho} }{ ( s^2 + 1 )^{ \frac{1}{2} } } ds \Big) dp_2 dp_3 \\
        & \le c(\mathfrak{c}+|q'|)^{\varrho - \eta + \alpha} \int_{0}^{\infty} \int_{0}^{2\pi} \frac{e^{-\frac{c\mathfrak{c}r}{T_\infty}}}{r^{b+|\varsigma|-1+\varrho} (1 + |q'| |\cos\theta| )^{\varrho}} d\theta dr. 
    \end{aligned}
    \]
    If $|q'|$ is small, then $I_2$ is obviously bounded. For large $|q'|$, we have that
    \[
    \begin{aligned}
        I_2 & \le c(\mathfrak{c}+|q'|)^{\varrho - \eta + \alpha} \frac{1}{|q'|^\varrho}\int_{0}^{\infty} \frac{e^{-\frac{c\mathfrak{c}r}{T_\infty}}}{r^{b+|\varsigma|-1+\varrho}} 
        \int_{0}^{2\pi} \frac{1}{|\cos\theta|^{\varrho}} d\theta dr 
        \le c,
    \end{aligned}    
    \]
    since $0<\varrho<1$, $0\le\alpha\le\eta$ and $b+|\varsigma| \in (0,\frac{1}{2})$.
    
    {\it Case 3: $q_1 \ne 0$.} We have that
    \[
        \int_{|p_1|<1} |k(p,q)|w_{-\varrho,\alpha}dp 
        = \Big( \int_{\substack{|p_1|<1,\\|p'| \ge 2|q|}} + \int_{ \substack{ |p_1|<1,|p'|<2|q|,\\|q_1| \ge \max\{2, 2|q'|\} }  } + \int_{ \substack{ |p_1|<1,|p'|<2|q|,\\|q_1|<\max\{2, 2|q'|\} } } \Big) |k(p,q)|w_{-\varrho,\alpha}dp  =: I_3  + I_4 + I_5. 
    \]
    
     For $I_3$, $2|q| \le |p'|$, we have $\frac{|p|}{2} \le |p-q| \le \frac{3}{2}|p|$, then $|p|\sim|p-q|$. It holds from \eqref{eq:k:es} and the proving process in case 1 that
    \[
        I_3 \le c \int_{\substack{|p_1|<1,\\ |p'| \ge 2|q|}} \frac{(\mathfrak{c}+|p|)^{1-\eta+\alpha} e^{-\frac{c\mathfrak{c}|p|}{T_\infty}}}{|p|^{1+b+|\varsigma|}}|p_1|^{-\varrho} dp \le c,    
    \]
    since $0<\varrho<1$, $0\le\alpha\le\eta$ and $b+|\varsigma| \in (0,\frac{1}{2})$.
    
    For $I_4$, we have that $|q_1| \ge 2 > 2|p_1|$, $\frac{|q_1|}{2}\le|p-q|\le\frac{9}{2}|q_1|$, and $|q|\sim|q_1|$. It holds that
    \[
    \begin{aligned}
        I_4 & \le c e^{ - \frac{c\mathfrak{c}|q_1| }{ T_\infty } }\frac{(\mathfrak{c}+|q_1|)^{1-\eta+\alpha}}{|q_1|^{1+b+|\varsigma|}} \int_{0}^{2|q|} \Big( \int_{|p_1|<1} |p_1|^{-\varrho}dp_1\Big)  |p'| d|p'| \le c e^{ - \frac{c\mathfrak{c}|q_1| }{ T_\infty } } (\mathfrak{c}+|q_1|)^{1-\eta+\alpha} |q_1|^{1-b-|\varsigma|}
        \le c, 
    \end{aligned}    
    \]
    since $0<\varrho<1$ and $b+|\varsigma| \in (0,\frac{1}{2})$. 
    
    For $I_5$, it is easy to see that $\mathfrak{c}+|p|<\mathfrak{c}+|q| < c (\mathfrak{c}+|q'|)$ and $1<\frac{\mathfrak{c}^2+|q|^2}{\mathfrak{c}^2+|q'|^2}<c$. Then, it holds that 
    \[
    \begin{aligned}
        |p\times q|^2+\mathfrak{c}^2 |p-q|^2 
        & =: m^2(p_1+\tilde{a})^2+\tilde{n}^2,
    \end{aligned}    
    \]
    where $m,\tilde{n}>0$, and
    \[
    \begin{aligned}
        \tilde{a} & := -\frac{q_1}{\mathfrak{c}^2+|q'|^2}(\mathfrak{c}^2+p'\cdot q'),\\
        m^2 & := \mathfrak{c}^2+|q'|^2,\\
        \tilde{n}^2 & := \frac{\mathfrak{c}^2+|q|^2}{\mathfrak{c}^2+|q'|^2} \big((p_2q_3-p_3q_2)^2+\mathfrak{c}^2|p'-q'|^2\big) \sim n^2.
    \end{aligned}    
    \]
    Let $s := \frac{m}{n}p_1$, $\bar{a} := \frac{m}{n}\tilde{a}$. It holds by a scaling transform and Lemma \ref{lem:bar:a:bounded} that
    \[
    \begin{aligned}
        I_5 
        & \le c(\mathfrak{c}+|q'|)^{1-\eta+\alpha} \int_{|p'| < 2|q|} \frac{e^{-\frac{c\mathfrak{c}|p'-q'|}{T_\infty}}}{|p'-q'|^{b+|\varsigma|}} \Big( \int_{\mathbb{R}} \frac{ |p_1|^{-\varrho} }{ \big( m^2(p_1+\tilde{a})^2+n^2 \big)^{ \frac{1}{2} } } dp_1 \Big) dp_2 dp_3 \\
        & \le c(\mathfrak{c}+|q'|)^{1-\eta+\alpha} \int_{|p'| < 2|q|} \frac{e^{-\frac{c\mathfrak{c}|p'-q'|}{T_\infty}} m^{\varrho-1} n^{-\varrho} }{ |p'-q'|^{b+|\varsigma|}} \Big( \int_{-\infty}^{+\infty} \frac{ |s|^{-\varrho} }{ ( ( s+\bar{a})^2 + 1 )^{ \frac{1}{2} } } ds \Big) dp_2 dp_3\\
        & \le c(\mathfrak{c}+|q'|)^{\varrho - \eta + \alpha} \int_{0}^{\infty} \int_{0}^{2\pi} \frac{e^{-\frac{c\mathfrak{c}r}{T_\infty}}}{r^{b+|\varsigma|-1+\varrho} (1 + |q'| |\cos\theta| )^{\varrho}} d\theta dr.
    \end{aligned}    
    \]

    If $|q'|$ is small, then $I_5$ is obviously bounded. For large $|q'|$, we have that
    \[
    \begin{aligned}
        I_5 & \le c(\mathfrak{c}+|q'|)^{\varrho - \eta + \alpha} \frac{1}{|q'|^\varrho}\int_{0}^{\infty} \frac{e^{-\frac{c\mathfrak{c}r}{T_\infty}}}{r^{b+|\varsigma|-1+\varrho}} 
        \int_{0}^{2\pi} \frac{1}{|\cos\theta|^{\varrho}} d\theta dr 
        \le c,
    \end{aligned}    
    \]
    since $0<\varrho<1$, $0\le\alpha\le\eta$ and $b+|\varsigma| \in (0,\frac{1}{2})$. We finish the proof of lemma in all cases. 
\end{proof}

\begin{lemma}\label{lem:wk}
    The operators $w_{-\varrho,\alpha}K$ and $K w_{-\varrho,\alpha}$ are bounded from $L_p^2$ to $L_p^2$ for $0<\varrho < \frac{1}{2}$ and $0\le\alpha\le\frac{\eta}{2}$.
\end{lemma}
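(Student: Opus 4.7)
The plan is to deduce both operator bounds from a Schur-type estimate whose key ingredient is already at our disposal: Lemma~\ref{lem:k:weight}. Since $k(p,q)$ is symmetric and $w_{-\varrho,\alpha}$ is a multiplication by a real-valued function, the two operators are adjoints of one another on $L^2_p$, i.e.\ $(w_{-\varrho,\alpha}K)^* = K w_{-\varrho,\alpha}$. Hence it suffices to prove that $w_{-\varrho,\alpha}K$ is bounded from $L^2_p$ to $L^2_p$, and the bound for $K w_{-\varrho,\alpha}$ will follow immediately by duality.

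To bound $w_{-\varrho,\alpha}K$, I would apply the Cauchy--Schwarz inequality to the kernel $k$, introducing $w_{-\varrho,\alpha}(q)$ as a splitting weight:
\[
|Kf(p)|^2 \le \Bigl(\int |k(p,q)|\, w_{-\varrho,\alpha}(q)\,dq\Bigr) \Bigl(\int |k(p,q)|\, w_{-\varrho,\alpha}(q)^{-1}\, |f(q)|^2\,dq\Bigr).
\]
By the symmetry $k(p,q)=k(q,p)$ and Lemma~\ref{lem:k:weight} applied with the parameters $(\varrho,\alpha)$ (which lie in the admissible range $(0,1)\times[0,\eta]$ because $\varrho<\tfrac12$, $\alpha\le\tfrac{\eta}{2}$), the first factor is bounded by a constant uniformly in $p$.

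Multiplying by $w_{-\varrho,\alpha}(p)^2$ and integrating in $p$, then swapping the order of integration, yields
\[
\|w_{-\varrho,\alpha} K f\|_{L^2_p}^2 \le c\int w_{-\varrho,\alpha}(q)^{-1}|f(q)|^2 \Bigl(\int w_{-\varrho,\alpha}(p)^2 |k(p,q)|\,dp\Bigr) dq.
\]
Observing that $w_{-\varrho,\alpha}(p)^2 = w_{-2\varrho,\,2\alpha}(p)$ with $0<2\varrho<1$ and $0\le 2\alpha\le\eta$, Lemma~\ref{lem:k:weight} applies a second time (again combined with the symmetry of $k$) to bound the inner integral uniformly in $q$. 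Finally, $w_{-\varrho,\alpha}$ is bounded below by a positive constant (namely $\min\{1,\mathfrak{c}^\alpha\}>0$, as the singular factor $|p_1|^{-\varrho}$ is only active where it exceeds $1$), so $w_{-\varrho,\alpha}^{-1}$ is bounded and we conclude $\|w_{-\varrho,\alpha}Kf\|_{L^2_p}^2 \le c\|f\|_{L^2_p}^2$.

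There is no genuine obstacle here: the restrictions $\varrho<\tfrac12$ and $\alpha\le\tfrac{\eta}{2}$ are precisely calibrated so that \emph{squaring} the weight keeps us within the admissible range of Lemma~\ref{lem:k:weight}, which is exactly what the Cauchy--Schwarz splitting demands. The only point requiring a brief verification is the boundedness from below of $w_{-\varrho,\alpha}$, which is immediate from its piecewise definition in \eqref{eq:w} and the lower bound $p_0\ge\mathfrak{c}$.
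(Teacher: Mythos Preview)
Your argument is correct and follows the same Schur/Cauchy--Schwarz strategy as the paper, with only a cosmetic difference in the splitting weight: the paper uses the trivial splitting $|k|^{1/2}\cdot|k|^{1/2}$, bounding the first factor by Proposition~\ref{prop:k:weight}(ii) (i.e.\ $\sup_p\int|k(p,q)|\,dq<\infty$) and the second by Lemma~\ref{lem:k:weight} with parameters $(2\varrho,2\alpha)$, whereas you insert the weight $w_{-\varrho,\alpha}(q)$ in the Cauchy--Schwarz step and invoke Lemma~\ref{lem:k:weight} twice plus the easy fact that $w_{-\varrho,\alpha}^{-1}$ is bounded. Your duality remark for $Kw_{-\varrho,\alpha}$ is also a fine alternative to the paper's ``similar'' remark.
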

\begin{proof}
    We only prove the lemma for operator $w_{-\varrho,\alpha}K$, since the proof for $K w_{-\varrho,\alpha}$ is similar. 
    \[
        \begin{aligned}
            \|w_{-\varrho,\alpha}K f\|_{L_p^2}^2 = & \int w_{-2\varrho,2\alpha}(p) \Big(\int k(p,q)f(q)dq\Big)^2dp\\
            \le & \int w_{-2\varrho,2\alpha}(p) \Big(\int k(p,q)dq\Big)\Big(\int k(p,q)f^2(q)dq\Big)dp\\
            \le & \underset{p}{\sup}\int k(p,q)dq \cdot \int  f^2(q) \Big(\int k(p,q)w_{-2\varrho,2\alpha}(p)dp\Big)dq.
        \end{aligned}
    \]
    By (ii) of Proposition \ref{prop:k:weight} and Lemma \ref{lem:k:weight}, we have that
    \[
        \|w_{-\varrho,\alpha}K f\|_{L_p^2}^2 \le c \|f\|^2_{L_p^2},
    \]
    for any $0<\varrho<\frac{1}{2}$ and $0\le\alpha\le\frac{\eta}{2}$. 
\end{proof}
In order to obtain $L^\infty_{x,p,\beta}$ estimate of $h$, we present the following lemma which is based on the energy estimate \eqref{eq:L2:estimate} of the linear Boltzmann equation with damping terms obtained in the previous section.
\begin{lemma}\label{lem:weighted:h}
    Let $\eta$ be defined in \eqref{eq:eta} and $w_{-\varrho,\alpha}$ be the weight function in \eqref{eq:w}. Then for any $0<\varrho<\frac{1}{2}$ and $0\le\alpha\le\frac{\eta}{2}$, the solution of \eqref{eq:h:2}, where $\gamma > \tau > 0$ and both are sufficiently small, satisfies	
    \[
        \|w_{-\varrho,\alpha}\nu^{\frac{1}{2}} h\| \le c E_{\varrho,\alpha}^{\frac{1}{2}}, \qquad  \|w_{1,-1} \nu^{-\frac{1}{2}} h_x\| \le c E_0^{\frac{1}{2}}, 
    \]
    where 
    \[
    \begin{aligned}
        E_0 &:= \langle |\hat{p}_1| a_0,  a_0\rangle_+ +\|\zeta\|^2,\\
        E_{\varrho,\alpha} &:= \langle |\hat{p}_1| w_{-\varrho,\alpha}  a_0,  w_{-\varrho,\alpha} a_0\rangle_+ +\|\zeta\|^2+ \|\nu^{-\frac{1}{2}} w_{-\varrho,\alpha} \zeta \|^2.
    \end{aligned}
    \]
\end{lemma}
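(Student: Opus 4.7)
For the first inequality, the plan is to run a weighted $L^2_{x,p}$ energy estimate by testing equation \eqref{eq:h:2} against $w_{-\varrho,\alpha}^2 h$ and integrating over $(0,\infty)\times\mathbb{R}^3$. Since the weight is $x$-independent, the transport term integrates by parts exactly as in the proof of Theorem \ref{thm:lin:existence}, producing
\[
(\hat{p}_1 h_x,w_{-\varrho,\alpha}^2 h) = -\tfrac12\langle\hat{p}_1 w_{-\varrho,\alpha}^2 a_0,a_0\rangle_+ + \tfrac12\langle|\hat{p}_1|w_{-\varrho,\alpha}^2 h^0,h^0\rangle_-,
\]
and writing $L=-\nu+K$ the dissipative contribution yields the main positive quantity $\|w_{-\varrho,\alpha}\nu^{1/2}h\|^2$. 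Rearranging, the identity I will obtain reads schematically
\[
\tfrac12\langle|\hat{p}_1|w_{-\varrho,\alpha}^2 h^0,h^0\rangle_- + \|w_{-\varrho,\alpha}\nu^{1/2}h\|^2 = \tfrac12\langle\hat{p}_1 w_{-\varrho,\alpha}^2 a_0,a_0\rangle_+ + (\zeta,w_{-\varrho,\alpha}^2 h) + (Kh,w_{-\varrho,\alpha}^2 h) + \tau(\hat{p}_1 h,w_{-\varrho,\alpha}^2 h) - \gamma(P_0^+\hat{p}_1 h,w_{-\varrho,\alpha}^2 h).
\]

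\textbf{Controlling the right-hand side.} Because $\nu(p)\ge c>0$ by \eqref{eq:nu}, I have $\|w_{-\varrho,\alpha}h\|\le c\|w_{-\varrho,\alpha}\nu^{1/2}h\|$. The source term is handled by Cauchy--Schwarz applied to the splitting $\zeta\cdot w_{-\varrho,\alpha}^2 h = (w_{-\varrho,\alpha}\nu^{-1/2}\zeta)(w_{-\varrho,\alpha}\nu^{1/2}h)$, producing $\epsilon\|w_{-\varrho,\alpha}\nu^{1/2}h\|^2 + c_\epsilon\|w_{-\varrho,\alpha}\nu^{-1/2}\zeta\|^2$. The $K$-term is the delicate one: by Lemma \ref{lem:wk}, for the stated range $0<\varrho<1/2$, $0\le\alpha\le\eta/2$, the operator $w_{-\varrho,\alpha}K$ is bounded on $L^2_p$, so
\[
|(Kh,w_{-\varrho,\alpha}^2 h)| = |(w_{-\varrho,\alpha}Kh,w_{-\varrho,\alpha}h)| \le c\|h\|\,\|w_{-\varrho,\alpha}\nu^{1/2}h\|,
\]
again absorbed via Cauchy--Schwarz at the cost of a residual $c\|h\|^2$. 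The $\tau$ correction is harmless since $|\hat{p}_1|\le\mathfrak{c}$ and $\tau$ is small. The damping term uses the finite-dimensional range of $P_0^+$: $P_0^+\hat{p}_1 h$ is a combination of the basis vectors $\chi_j\in\mathcal{N}$, each a product of $J_\infty^{1/2}$ and a polynomial, and $\|w_{-\varrho,\alpha}\nu^{-1/2}\chi_j\|_{L^2_p}$ is finite precisely because $\int_{|p_1|<1}|p_1|^{-2\varrho}\,dp_1<\infty$ for $\varrho<1/2$; this is exactly the place where the restriction $\varrho<1/2$ enters.

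\textbf{Closing up.} After absorbing all $\epsilon$-terms, the residual $c\|h\|^2$ is handled by the unweighted estimate \eqref{eq:L2:estimate}: $\|h\|^2\le c(\nu h,h)\le cE_0\le cE_{\varrho,\alpha}$. Combined with the boundary contribution on $p_1>0$ that folds into $\langle|\hat{p}_1|w_{-\varrho,\alpha}^2 a_0,a_0\rangle_+$, this gives $\|w_{-\varrho,\alpha}\nu^{1/2}h\|^2\le cE_{\varrho,\alpha}$. The second inequality is essentially free: a pointwise comparison on $|p_1|<1$ and $|p_1|\ge 1$ using \eqref{eq:w} and $|\hat{p}_1|=\mathfrak{c}|p_1|/p_0$ shows that $w_{1,-1}\le|\hat{p}_1|/\mathfrak{c}$, hence $\|w_{1,-1}\nu^{-1/2}h_x\|\le c\|\nu^{-1/2}|\hat{p}_1|h_x\|\le cE_0^{1/2}$ directly from \eqref{eq:L2:estimate}.

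\textbf{Main obstacle.} The principal difficulty is the $K$-contribution in the weighted identity: it requires the nontrivial $L^2_p$-boundedness of the weighted kernel operator $w_{-\varrho,\alpha}K$ from Lemma \ref{lem:wk} (which itself rests on the pointwise kernel bound \eqref{eq:k:es} and the singular integral estimate in Lemma \ref{lem:bar:a:bounded}), together with the integrability of the singular factor $|p_1|^{-2\varrho}$ against the finite-dimensional component $P_0^+\hat{p}_1 h$. Both limit the admissible $\varrho$ to $(0,1/2)$ and the admissible $\alpha$ to $[0,\eta/2]$, and this is the sole source of these constraints.
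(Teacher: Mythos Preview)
Your proposal is correct and follows essentially the same weighted energy estimate as the paper. Two minor differences worth noting: the paper inserts a smooth cutoff $\theta(p)$ before testing against $w_{-\varrho,\alpha}^2 h$ (so that all quantities are a priori finite, then lets $\theta\to 1$), and it folds the damping into $\bar K=K-\gamma P_0^+\hat p_1$ rather than treating it separately; for the second inequality the paper re-derives the bound on $\|\nu^{-1/2}|\hat p_1|h_x\|$ from the equation in each region, whereas your observation that $w_{1,-1}\le|\hat p_1|/\mathfrak{c}$ pointwise lets you cite \eqref{eq:L2:estimate} directly, which is cleaner.
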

\begin{proof}
    Note that $E_0 \le c E_{\varrho,\alpha}$ for $\varrho,\alpha\ge 0$. From the energy estimate \eqref{eq:L2:estimate} for the solution of \eqref{eq:h:2}, 
    we have that
    \[
        \|h\|^2\le c(\|\zeta\|^2+\langle \hat{p}_1 a_0, a_0\rangle_+)= c E_0.
    \]

    Let $\theta(p)$ be a smooth cut-off function satisfying
    \[
        \theta(p)=0, \quad |p|>M; \qquad \qquad \theta(p)=1, \quad |p|<M-1,
    \]
    and is monotone with respect to $|p|$ when $M-1 \le |p| \le M$.
    We take the $L_{x,p}^2$ inner product of $\theta^2 w_{-\varrho,\alpha}^2 h$ and \eqref{eq:h:2}, then
    \[
        (\hat{p}_1 h_x, \theta^2 w_{-\varrho,\alpha}^2 h)-\tau(\hat{p}_1 h, \theta^2 w_{-\varrho,\alpha}^2 h)-( L h,\theta^2 w_{-\varrho,\alpha}^2 h)=( \zeta,\theta^2 w_{-\varrho,\alpha}^2 h)-\gamma( P_0^+\hat{p}_1 h, \theta^2 w_{-\varrho,\alpha}^2 h). 
    \]
    Since the first term
    \[
    \begin{aligned}
        (\hat{p}_1 h_x, \theta^2 w_{-\varrho,\alpha}^2 h)
        & = \frac{1}{2}\int_{\mathbb{R}^3}\theta^2 w_{-\varrho,\alpha}^2\hat{p}_1 h^2|_{x=0}^{x=\infty }dp\\
        & = - \frac{1}{2}\langle \theta^2 w_{-\varrho,\alpha}^2\hat{p}_1 h^0, h^0\rangle \\
        &=-\frac{1}{2}\langle \theta^2 w_{-\varrho,\alpha}^2\hat{p}_1 a_0, a_0\rangle_+ +\frac{1}{2}\langle \theta^2 w_{-\varrho,\alpha}^2 |\hat{p}_1| h^0, h^0\rangle_-,  
    \end{aligned}
    \]
    then we have that
    \[
    \begin{aligned}
        &\frac{1}{2}\langle \theta^2 w_{-\varrho,\alpha}^2 |\hat{p}_1| h^0, h^0\rangle_- -\tau(\hat{p}_1 h, \theta^2 w_{-\varrho,\alpha}^2 h) - ( L h,\theta^2 w_{-\varrho,\alpha}^2 h)\\
        =&( \zeta,\theta^2 w_{-\varrho,\alpha}^2 h)-\gamma( P_0^+\hat{p}_1 h, \theta^2 w_{-\varrho,\alpha}^2 h)+\frac{1}{2}\langle \theta^2 w_{-\varrho,\alpha}^2\hat{p}_1 a_0, a_0\rangle_+.
    \end{aligned}
    \]

    We note that
    \[
        L = - \nu +K,  \quad \bar{K}=K-\gamma P_0^+ \hat{p}_1.
    \]
    Since $\hat{p}_1$ is bounded, we have for sufficiently small $\tau>0$ that $| \tau(\hat{p}_1 h, \theta^2 w_{-\varrho,\alpha}^2 h) | \le \frac{1}{2} \| \nu^\frac{1}{2}\theta w_{-\varrho,\alpha}h \|^2$, and 
    \begin{equation}\label{eq:theta:es}
    \begin{aligned}
        & \| |\hat{p}_1|^\frac{1}{2} \theta w_{-\varrho,\alpha}h^0\|_-^2+\|\nu^\frac{1}{2}\theta w_{-\varrho,\alpha}h\|^2\\
        \le &c(\|\nu^{-\frac{1}{2}}\theta w_{-\varrho,\alpha} \bar{K}h\|\ \|\nu^\frac{1}{2}\theta w_{-\varrho,\alpha} h\|+\|\nu^{-\frac{1}{2}}\theta w_{-\varrho,\alpha} \zeta\|\ \|\nu^\frac{1}{2}\theta w_{-\varrho,\alpha} h\| +\| |\hat{p}_1|^\frac{1}{2} \theta w_{-\varrho,\alpha}a_0\|_+^2). 
    \end{aligned}
    \end{equation}

    By Lemma \ref{lem:wk}, the operators $w_{-\varrho,\alpha}\bar{K}$ and $\bar{K} w_{-\varrho,\alpha}$ are bounded from $L_p^2$ to $L_p^2$ for $0<\varrho < \frac{1}{2}$ and $0\le\alpha\le\frac{\eta}{2}$. 
    Letting $\theta (p ) \rightarrow 1$ in \eqref{eq:theta:es} and using the Cauchy's inequality, we have that
    \[
    \begin{aligned}
        &\||\hat{p}_1|^\frac{1}{2} w_{-\varrho,\alpha}h^0\|_-^2 + \|\nu^\frac{1}{2} w_{-\varrho,\alpha}h\|^2 \\
        \le & c ( \|\nu^{-\frac{1}{2}} w_{-\varrho,\alpha} \bar{K}h\|\ \|\nu^\frac{1}{2} w_{-\varrho,\alpha} h\|+\| \nu^{-\frac{1}{2}} w_{-\varrho,\alpha} \zeta\|\ \|\nu^\frac{1}{2} w_{-\varrho,\alpha} h\|+\||\hat{p}_1|^\frac{1}{2} w_{-\varrho,\alpha}a_0\|_+^2)\\
        \le & c ( \|h\| \| \nu^\frac{1}{2} w_{-\varrho,\alpha} h\|+\|\nu^{-\frac{1}{2}} w_{-\varrho,\alpha} \zeta\|\ \|\nu^\frac{1}{2} w_{-\varrho,\alpha} h\|+\||\hat{p}_1|^\frac{1}{2} w_{-\varrho,\alpha}a_0\|_+^2)\\
        \le & c \big( (E_0^{1/2} + E_{\varrho,\alpha}^{1/2}) \| \nu^\frac{1}{2}w_{-\varrho,\alpha} h \| + E_{\varrho,\alpha} \big) \\
        \le & \frac{1}{2} \|\nu^\frac{1}{2} w_{-\varrho,\alpha}h\|^2 + c E_{\varrho,\alpha}. 
    \end{aligned}
    \]
    From the above estimate, it holds that 
    \begin{equation}\label{eq:nu:h:esti}
        \| \nu^\frac{1}{2} w_{-\varrho,\alpha} h \|
        \le c E_{\varrho,\alpha}^{1/2}. 
    \end{equation}

    From \eqref{eq:nu:h}, we have that
    \begin{equation}\label{eq:nu:h:E}
        \| \nu^\frac{1}{2} h \| \le c E_0^{1/2}.
    \end{equation}
    By using \eqref{eq:h:2} and \eqref{eq:nu:h:E}, if $|p_1|<1$, 
    we have
    \begin{equation}\label{eq:nu:hx:le:esti}
    \begin{aligned}
        \|\nu^{-\frac{1}{2}} w_{1,-1}h_x\| \le c \|\nu^{-\frac{1}{2}} |\hat{p}_1|h_x\|&= c \|\nu^{-\frac{1}{2}}(\tau  \hat{p}_1 h+Lh+\zeta-\gamma P_0^+ \hat{p}_1h)\|\\
        & \le c \big( \|\nu^{-\frac{1}{2}}\tau  \hat{p}_1h\|+\|\nu^{-\frac{1}{2}}\nu h\|+\|\nu^{-\frac{1}{2}}\zeta\|+\|\nu^{-\frac{1}{2}}\bar{K}h\| \big) \\
        &\le c (\|h\|+\|\nu^{\frac{1}{2}} h\|+\|\zeta\|)\\
        &\le c E_0^{1/2}.
    \end{aligned}
    \end{equation}
    In addition, if $|p_1| \ge 1$, we have
    \begin{equation}\label{eq:nu:hx:ge:esti}
    \begin{aligned}
        \|\nu^{-\frac{1}{2}} w_{1,-1}h_x\|=\|\nu^{-\frac{1}{2}} p_0^{-1}h_x\|&=\|\nu^{-\frac{1}{2}}(p_0^{-1} \tau  h+\frac{1}{\mathfrak{c} p_1}Lh+\frac{1}{\mathfrak{c} p_1}\zeta-\gamma \frac{1}{\mathfrak{c} p_1} P_0^+\hat{p}_1 h)\|\\
        &\le c \big( \|\nu^{-\frac{1}{2}}\tau h\|+\|\nu^{-\frac{1}{2}}\nu h\|+\|\nu^{-\frac{1}{2}}\zeta\|+\|\nu^{-\frac{1}{2}}\bar{K}h\| \big) \\
        &\le c (\|h\|+\|\nu^{\frac{1}{2}} h\|+\|\zeta\|)\\
        &\le c E_0^{1/2}.
    \end{aligned}
    \end{equation}
    By \eqref{eq:nu:h:esti}, \eqref{eq:nu:hx:le:esti} and \eqref{eq:nu:hx:ge:esti}, the lemma is proved. 
\end{proof}

Now we are ready to obtain the main estimate in this section on $\|h\|_{L_{x,p,\beta}^\infty}$. 

\begin{theorem}\label{thm:h:infty}
    Let $\eta$ be defined in \eqref{eq:eta}. Suppose $\eta \in [ \frac{2}{3}, 1 ]$, $0 < \varrho < \frac{1}{2}$, $ 1-\eta \le \alpha \le \frac{\eta}{2}$ and $\beta>\frac{3+2\alpha}{2}$. Then the solution of \eqref{eq:h:2}, where $\gamma > \tau > 0$ and both are sufficiently small, satisfies 
    \begin{equation}\label{lem:h:beta}
        \|h\|_{L_{x,p,\beta}^\infty }\le c(\|\nu^{-1}\zeta\|_{L_{x,p,\beta}^\infty}+\|\zeta\|+\|\nu^{-\frac{1}{2}}w_{-\varrho,\alpha}\zeta\|+\|a_0\|_{L_{p,\beta}^\infty(\mathbb{R}^3_+)}).
    \end{equation}
\end{theorem}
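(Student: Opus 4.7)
The plan is to work from the mild formulation \eqref{def:sol:h}, namely $h = \tilde a + U\zeta + U\bar K h$. Since the mapping $\bar K:L^\infty_{p,\beta}\to L^\infty_{p,\beta+\eta}$ from Proposition \ref{prop:L}(ii) gains only $p_0^\eta$ while $U$ contributes at best $\nu^{-1}\sim p_0^{-a/2}$, a single application of $U\bar K$ does not produce a contraction in $L^\infty_{p,\beta}$. I would therefore iterate once to obtain
\[
h \;=\; \tilde a \;+\; U\zeta \;+\; U\bar K\tilde a \;+\; U\bar K U\zeta \;+\; (U\bar K)^2 h,
\]
and treat the first four terms directly, reserving the harder analysis for the double composition $(U\bar K)^2 h$.

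For the explicit terms I combine Lemma \ref{lem:U} at $q=\infty$, which yields $|U\psi|\le c\nu^{-1}\|\psi\|_{L^\infty_x}$, with Proposition \ref{prop:L}(ii) and Proposition \ref{prop:k:weight}(iv); the finite-rank piece $\gamma P_0^+\hat p_1$ in $\bar K$ maps into the Schwartz-class subspace $\mathcal N$ and is therefore easier than $K$. The boundary term $\tilde a$ contributes $\|a_0\|_{L^\infty_{p,\beta}(\mathbb R^3_+)}$ pointwise, $U\zeta$ contributes $\|\nu^{-1}\zeta\|_{L^\infty_{x,p,\beta}}$, and $U\bar K\tilde a$, $U\bar K U\zeta$ each produce an extra $p_0^{-\eta-a/2}$ factor from the composition so they are bounded by the same quantities.

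For $(U\bar K)^2h$ I expand into a four-fold integral in $(x',x'',q,q')$ against the product kernel $|S(x-x',p)|\,|k(p,q)|\,|S(x'-x'',q)|\,|k(q,q')|$ and split the $(q,q')$ domain into a bounded core $\Omega_N := \{|q|,|q'|\le N,\ |q_1|\ge N^{-1}\}$ and its complement. On $\Omega_N$ every singular factor is bounded, so Cauchy--Schwarz in $(x',x'',q,q')$ converts the integral into $L^2$-type norms of $h$: the weight $|p_1|^{-\varrho}$ coming from $S$ and the $\nu^{-1/2}$ appearing from Cauchy--Schwarz are absorbed by $w_{-\varrho,\alpha}\nu^{1/2}h$ and by $w_{1,-1}\nu^{-1/2}h_x$, which are precisely the norms bounded in Lemma \ref{lem:weighted:h} by $E_0^{1/2}+E_{\varrho,\alpha}^{1/2}$, after the $L^\infty_x\hookrightarrow\|\cdot\|_{L^2_x}^{1/2}\|\partial_x\cdot\|_{L^2_x}^{1/2}$ trade from Lemma \ref{lem:estimate:f}. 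The assumption $\beta>(3+2\alpha)/2$ guarantees that the boundary contribution $\langle |\hat p_1|w_{-\varrho,\alpha}a_0,w_{-\varrho,\alpha}a_0\rangle_+$ inside $E_{\varrho,\alpha}$ is controlled by $\|a_0\|_{L^\infty_{p,\beta}(\mathbb R^3_+)}^2$, so the full right-hand side of \eqref{lem:h:beta} bounds this piece. On $\Omega_N^c$ I majorize $|h|\le p_0^{-\beta}\|h\|_{L^\infty_{x,p,\beta}}$ and use Proposition \ref{prop:k:weight}(iv) together with Lemmas \ref{lem:k:weight} and \ref{lem:U} to extract a prefactor $\varepsilon(N)\to 0$ as $N\to\infty$; choosing $N$ large produces an $\varepsilon(N)\|h\|_{L^\infty_{x,p,\beta}}$ term that I absorb into the left-hand side.

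The main obstacle will be the bookkeeping on $\Omega_N$: the operator $U$ carries the singularity $1/\hat p_1$ near $p_1=0$, while Cauchy--Schwarz produces the $\nu^{-1/2}$ factor, and these must be matched precisely against the singular weight $w_{-\varrho,\alpha}$ with $0<\varrho<1/2$ and $1-\eta\le\alpha\le\eta/2$. The constraint $\eta\in[\tfrac{2}{3},1]$ is exactly what makes this range for $\alpha$ non-empty and what permits the application of Lemma \ref{lem:k:weight}; together with $\beta>(3+2\alpha)/2$ it ensures that every weighted integral in both the core and the complementary region converges and that all resulting norms coincide with those already controlled in Lemmas \ref{lem:weighted:h} and \ref{lem:wk}, closing the estimate after absorption.
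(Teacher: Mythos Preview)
Your route is genuinely different from the paper's, and the weight bookkeeping you sketch on the core $\Omega_N$ does not close as written. The singular factor $|p_1|^{-\varrho}$ does \emph{not} come from $S$: applying Cauchy--Schwarz to the outer $\int S(x-x',p)\,dx'$ would produce $(\nu(p)|\hat p_1|)^{-1/2}$ in the $p$-variable, which is fatal for a sup-in-$p$ estimate, while on $\Omega_N$ you have $|q_1|\ge N^{-1}$ so the inner $S$ contributes no singularity at all. More seriously, if on $\Omega_N$ you pass directly to $\|h\|_{L^2_{q'}(L^\infty_x)}$ and then invoke Lemma~\ref{lem:estimate:f}, you land on an \emph{unweighted} $\|h_x\|_{L^2_{x,q'}}$, which Lemma~\ref{lem:weighted:h} does not control (only $\|w_{1,-1}\nu^{-1/2}h_x\|$ is bounded, and $w_{1,-1}$ vanishes at $q'_1=0$). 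The mechanism that actually produces the matching weights is a factorization $\bar K h = (\bar K w_{-\varrho',\alpha'})(w_{\varrho',-\alpha'}h)$ together with Lemma~\ref{lem:wk}, which buys a regular factor $w_{\varrho',-\alpha'}$ on $h$; only then does the interpolation split as $w_{\varrho',-\alpha'}^2 = w_{-(1-2\varrho'),1-2\alpha'}\nu^{1/2}\cdot w_{1,-1}\nu^{-1/2}$, and the two pieces are exactly the norms in Lemma~\ref{lem:weighted:h}. Your sketch omits this step.

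For comparison, the paper avoids the $(U\bar K)^2$ expansion and the $\Omega_N$/$\Omega_N^c$ splitting entirely. It first iterates the one-step bound $\|h\|_{L^\infty_{x,p,\beta}}\le \|a_0\|_{L^\infty_{p,\beta}(\mathbb R^3_+)}+c\|h\|_{L^\infty_{x,p,\beta-\eta-a/2}}+c\|\nu^{-1}\zeta\|_{L^\infty_{x,p,\beta}}$ in the weight index $\beta$ until it reaches the unweighted $\|h\|_{L^\infty_{x,p}}$; then it uses $\bar K:L^2_p\to L^\infty_p$ to descend to $\|h\|_{L^2_p(L^\infty_x)}$; then the factorization through $\bar K w_{-\varrho',\alpha'}$ (Lemma~\ref{lem:wk}) gives $\|w_{\varrho',-\alpha'}h\|_{L^2_p(L^\infty_x)}$; and finally Lemma~\ref{lem:estimate:f} plus H\"older yield the product of the two weighted $L^2_{x,p}$ norms from Lemma~\ref{lem:weighted:h}. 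No absorption argument is needed. If you want to salvage your scheme, you must insert the same factorization before the Cauchy--Schwarz on $\Omega_N$, and you should also check that the $|q_1|<N^{-1}$ part of $\Omega_N^c$ really yields $\varepsilon(N)\to 0$ uniformly in $p$ --- it does, via the measure of the strip and Proposition~\ref{prop:k:weight}(ii), but this deserves a line.
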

\begin{proof}
    Using \eqref{def:sol:h}, properties of $U$ in Lemma \ref{lem:U}, and properties of $\nu$ and $K$, as well as $\bar{K}$, in Proposition \ref{prop:L}, we have that 
    \begin{equation}\label{eq:h:beta}
    \begin{aligned}
        \|h\|_{L_{x,p,\beta}^\infty}&=\underset{x\in \mathbb{R}^+,p\in \mathbb{R}^3}{\sup}|p_{0}^{\beta}(\tilde{a}+U(\bar{K}h+\zeta))|\\
        &\le \underset{x\in \mathbb{R}^+,p\in \mathbb{R}^3}{\sup}|p_{0}^{\beta}\tilde{a}|+\underset{x\in \mathbb{R}^+,p\in \mathbb{R}^3}{\sup}|p_{0}^{\beta}U(\bar{K}h+\zeta)|\\
        &\le \underset{p\in \mathbb{R}^3_+}{\sup}|p_{0}^{\beta}a_0(p)|+\underset{x\in \mathbb{R}^+,p\in \mathbb{R}^3}{\sup}|p_{0}^{\beta}U(\bar{K}h+\zeta)|\\
        &=\|a_0\|_{L_{p,\beta}^\infty(\mathbb{R}^3_+)}+\|U(\bar{K}h+\zeta)\|_{L^\infty_{x,p,\beta}}\\
        &\le \|a_0\|_{L_{p,\beta}^\infty(\mathbb{R}^3_+)}+c(\|\nu^{-1}\bar{K}h\|_{L^\infty_{x,p,\beta}}+\|\nu^{-1}\zeta\|_{L^\infty_{x,p,\beta}}) \\
        &\le \|a_0\|_{L_{p,\beta}^\infty(\mathbb{R}^3_+)}+c(\|h\|_{L^\infty_{x,p,\beta-\eta-\frac{a}{2}}}+\|\nu^{-1}\zeta\|_{L^\infty_{x,p,\beta}}). 
    \end{aligned}
    \end{equation}

    Iterating \eqref{eq:h:beta} gives
    \begin{equation}\label{eq:h:beta:estimate}
    \begin{aligned}
        \|h\|_{L^\infty_{x,p,\beta}}&\le c(\|a_0\|_{L_{p,\beta}^\infty(\mathbb{R}^3_+)}+\|h\|_{L_{x,p,c'-\eta-\frac{a}{2}}^\infty }+ \vertiii{\zeta} ) \\
        &\le c(\|a_0\|_{L_{p,\beta}^\infty(\mathbb{R}^3_+)}+\|h\|_{L^\infty_{x,p}} + \vertiii{\zeta} ),
    \end{aligned}
    \end{equation}
    where $0<c'<\eta+\frac{a}{2}$ and 
    \begin{equation}\label{eq:vertiii:zeta}
        \vertiii{\zeta} := \|\nu^{-1}\zeta\|_{L^\infty_{x,p,\beta}}+\|\nu^{-1}\zeta\|_{L_{x,p,\beta-\eta-\frac{a}{2}}^\infty} + \cdots + \|\nu^{-1}\zeta\|_{L_{x,p,c'}^\infty }\le c\|\nu^{-1}\zeta\|_{L_{x,p,\beta}^\infty }.    
    \end{equation}
    
    It remains to estimate $\|h\|_{L^\infty_{x,p}}$. By using the expression of $h$, Lemma \ref{lem:U} and noticing that the operator $\bar{K}$ is bounded from $L^2_p(L^\infty_x)$ to $L^\infty_{x,p}$, we obtain that
    \begin{equation}\label{eq:infty:infty}
    \begin{aligned}
        \|h\|_{L^\infty_{x,p}}&=\|\tilde{a}+U(\bar{K}h+\zeta)\|_{L^\infty_{x,p}}\\
        &\le\|\tilde{a}\|_{L^\infty_{x,p}}+\|U(\bar{K}h+\zeta)\|_{L^\infty_{x,p}}\\
        &\le\| a_0 \|_{L^\infty_{p,+}}+c\|\nu^{-1}\bar{K}h\|_{L^\infty_{x,p}} + c \|\nu^{-1}\zeta\|_{L^\infty_{x,p}} \\
        &\le\| a_0 \|_{L^\infty_{p,+}}+c(\|h\|_{L^2_{p}(L^\infty_x)}+\|\nu^{-1}\zeta\|_{L^\infty_{x,p}}).
    \end{aligned}
    \end{equation}
    
    By Lemma \ref{lem:wk}, the operator $\bar{K} w_{-\varrho',\alpha'}$ are bounded from $L_p^2$ to $L_p^2$ for $0 < \varrho' < \frac{1}{2}$ and $0 \le \alpha' \le \frac{\eta}{2}$. By \eqref{def:sol:h} and Lemma \ref{lem:U}, we have for $\eta\in [ \frac{2}{3}, 1 ] $, $\frac{1}{4}<\varrho'<\frac{1}{2}$, $\frac{1}{2}-\frac{\eta}{4}\le\alpha'\le\frac{\eta}{2}$ that 
    \begin{equation}\label{eq:h:2infty}
    \begin{aligned}
        \|h\|_{L^2_p(L^\infty_x)}&=\|\tilde{a}+U(\bar{K}h+\zeta)\|_{L^2_p(L^\infty_x)}\\
        &\le\|\tilde{a}\|_{L^2_p(L^\infty_x)}+\|U(\bar{K}h+\zeta)\|_{L^2_p(L^\infty_x)}\\
        &\le \| a_0 \|_{L^2_{p} ( \mathbb{R}^3_+ ) } + c \| \nu^{-1} \bar{K} w_{-\varrho',\alpha'}w_{\varrho',-\alpha'}h\|_{L^2_p(L^\infty_x)} + c \|\nu^{-1}\zeta\|_{L^2_p(L^\infty_x)} \\
        &\le\|a_0 \|_{L^2_{p} ( \mathbb{R}^3_+ ) } + c ( \| w_{\varrho',-\alpha'} h \|_{ L^2_p(L^\infty_x)}+\|\nu^{-1}\zeta\|_{L^2_p(L^\infty_x)}) .
    \end{aligned}
    \end{equation}
    
    It holds for $\beta >\frac{3}{2}$ that 
    \begin{equation}\label{eq:a0}
    \begin{aligned}
        \| a_0 \|_{L^2_{p} (\mathbb{R}^3_+)}^2 & = \int_{p_1>0}a_0^2(p)dp\\
        & \le \int_{p_1>0}p_0^{-2\beta} p_0^{2\beta} a_0^2(p)dp\\
        & \le \underset{p_1>0}{\sup} |p_0^{2\beta} a_0^2 | \cdot \int_{p_1>0}p_0^{-2\beta}dp\\
        & \le c \| a_0 \|^2_{ L^\infty_{p,\beta} (\mathbb{R}^3_+) },
    \end{aligned}
    \end{equation}
    and
    \begin{equation}\label{eq:nuh}
    \begin{aligned}
        \| \nu^{-1}\zeta \|_{L^2_p(L^\infty_x)} = & \big(\int_{\mathbb{R}^3}(p_0^{\beta}\underset{x\in \mathbb{R}^+}{\sup}|\nu^{-1}\zeta|)^2p_0^{-2\beta}dp\big)^{\frac{1}{2}}\\
        \le & \underset{x\in \mathbb{R}^+,p\in \mathbb{R}^3}{\sup}|p_0^{\beta}\nu^{-1}\zeta| \cdot \big(\int p_0^{-2\beta}dp\big)^{\frac{1}{2}}\\
        \le & c\|\nu^{-1}\zeta\|_{L_{x,p,\beta}^\infty }. 
    \end{aligned}
    \end{equation}

    Note that $\eta\in [ \frac{2}{3}, 1 ]$,  $\frac{1}{4}<\varrho'<\frac{1}{2}$ and $\frac{1}{2}-\frac{\eta}{4}\le\alpha'\le\frac{\eta
    }{2}$, i.e. $0<1-2\varrho'<\frac{1}{2}$ and $0\le1-\eta\le 1-2\alpha'\le\frac{\eta}{2}$. By Lemma \ref{lem:estimate:f}, Lemma \ref{lem:weighted:h} and H\"{o}lder's inequality, it holds that
    \begin{equation}\label{eq:two:infty}
    \begin{aligned}
        \|w_{\varrho',-\alpha'}h\|_{L^2_p(L^\infty_x)}^2 & = \int_{\mathbb{R}^3} w_{\varrho',-\alpha'}^2 \|h\|_{L^\infty_x}^2 d p \\
        & \le c\int_{\mathbb{R}^3} \big( w_{\varrho',-\alpha'}^2w_{-1,1}\nu^\frac{1}{2}\|h(x,p)\|_{L^2_x}\big) \big( w_{1,-1}\nu^{-\frac{1}{2}}\|h_x(x,p)\|_{L^2_x}   \big) dp \\
        &\le c\Big( \int_{\mathbb{R}^3}\| w_{-(1-2\varrho'),1-2\alpha'}\nu^\frac{1}{2}h(x,p)\|_{L^2_x}^2 dp\Big) ^\frac{1}{2}\Big( \int_{\mathbb{R}^3}\| w_{1,-1}\nu^{-\frac{1}{2}}h_x(x,p)\|_{L^2_x}^2 dp\Big) ^\frac{1}{2} \\
        & = c \| w_{-(1-2\varrho'),1-2\alpha'}\nu^\frac{1}{2}h(x,p)\|_{L^2_{x,p}} \  \| w_{1,-1}\nu^{-\frac{1}{2}}h_x(x,p)\|_{L^2_{x,p}}\\
        &\le c E_0^{\frac{1}{2}} E_{1-2\varrho',1-2\alpha'}^{\frac{1}{2}} = c E_0^{\frac{1}{2}} E_{\varrho,\alpha}^{\frac{1}{2}},
    \end{aligned}
    \end{equation}
    by letting $\varrho := 1-2\varrho'$, $\alpha := 1-2\alpha'$. Obviously, $0 < \varrho < \frac{1}{2}$ and $0 \le 1-\eta \le \alpha \le \frac{\eta}{2}$. 

    By using \eqref{eq:h:beta:estimate} - \eqref{eq:two:infty} and the definition of $E_0$ and $E_{\varrho,\alpha}$ in Lemma \ref{lem:weighted:h}, when $0<\varrho <\frac{1}{2}$ and $0 \le 1-\eta \le \alpha \le \frac{\eta}{2}$, we have that
    \[
    \begin{aligned}
        \|h\|_{L_{x,p,\beta}^\infty}&\le c(\| a_0(p) \|_{ L^\infty_{p,\beta} (\mathbb{R}^3_+)}+E_0^{\frac{1}{4}} E_{\varrho,\alpha}^{\frac{1}{4}} + \|\nu^{-1}\zeta\|_{L_{x,p,\beta}^\infty })\\
        &\le c(\| a_0(p) \|_{ L^\infty_{p,\beta} (\mathbb{R}^3_+)}+\||\hat{p}_1|^\frac{1}{2}w_{-\varrho,\alpha}a_0\|_+ +\|\zeta\|+\|\nu^{-\frac{1}{2}}w_{-\varrho,\alpha}\zeta\|+\|\nu^{-1}\zeta\|_{L_{x,p,\beta}^\infty}).
    \end{aligned}
    \]

    Since $\hat{p}_1$ is bounded, we have for $0<\varrho <\frac{1}{2}$, $\beta >\frac{3+2\alpha}{2}$ that
    \begin{equation}\label{eq:wp0a0}
    \begin{aligned}
        \||\hat{p}_1|^\frac{1}{2}w_{-\varrho,\alpha}a_0\|_+ \le & \big(\int_{p_1>0}w_{-2\varrho,2\alpha}p_0^{-2\beta}p_0^{2\beta}a_0^2dp\big)^{\frac{1}{2}}\\
        \le & \underset{p\in \mathbb{R}^3,p_1>0}{\sup} | p_0^{\beta}a_0 | \cdot \big(\int w_{-2\varrho,2\alpha}p_0^{-2\beta}dp\big)^{\frac{1}{2}} \\
        \le & c\| a_0 \|_{ L^\infty_{p,\beta} (\mathbb{R}^3_+)}.
    \end{aligned}
    \end{equation}

    Therefore, by \eqref{eq:wp0a0}, when $0<\varrho <\frac{1}{2}$, $0 \le 1-\eta \le \alpha \le \frac{\eta}{2}$ and $\beta >\frac{3+2\alpha}{2}$,  we have that
    \[
    \begin{aligned}
        \|h\|_{L_{x,p,\beta}^\infty}&
        &\le c(\| a_0(p) \|_{ L^\infty_{p,\beta} (\mathbb{R}^3_+)} +\|\zeta\|+\|\nu^{-\frac{1}{2}}w_{-\varrho,\alpha}\zeta\|+\|\nu^{-1}\zeta\|_{L_{x,p,\beta}^\infty}).
    \end{aligned}
    \]
    This completes the proof of the theorem.
\end{proof}

\section{Existence of the nonlinear problem with damping}\label{sec:enp}

In this section, we will prove the existence of solution to the nonlinear Boltzmann equation \eqref{eq:h:2} and \eqref{eq:zeta} with damping. 
\begin{theorem}\label{thm:non:exist}
    Suppose that the scattering kernel $\sigma$ satisfies \eqref{eq:sigma}, \eqref{eq:ab} and \eqref{eq:gamma}, $\eta$ defined in \eqref{eq:eta} belongs to $[\frac{2}{3}, 1]$, $\beta > 2$. There exist positive constants $\tau$, $\gamma$, $\epsilon_0$, $c_0$, such that for any $a_0$ satisfying
    \[
        \| a_0(p) \|_{ L^\infty_{p,\beta} (\mathbb{R}^3_+)}<\epsilon_0,
    \]
    there exists a unique solution $h$ to the nonlinear problem \eqref{eq:h:2}, \eqref{eq:zeta}, which satisfies
    \[
        \|h\|_{L^\infty_{x,p,\beta}}<c_0. 
    \]
\end{theorem}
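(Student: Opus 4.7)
My plan is to set up a standard contraction mapping in the Banach space $L^\infty_{x,p,\beta}$ by decoupling the nonlinearity via the source term $\zeta$. Given $\bar h \in L^\infty_{x,p,\beta}$, define the map $T(\bar h) = h$ where $h$ is the unique solution, provided by Theorem \ref{thm:lin:existence}, of the linear problem \eqref{eq:h:2} with boundary data $a_0$ and source $\zeta = e^{-\tau x}\Gamma(\bar h,\bar h)$. Proposition \ref{prop:gamma} ensures $P_0\zeta=0$, which is needed to legitimately view $\zeta$ as a forcing term in the linear theory. The fixed point of $T$ is the desired solution.

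The first key step is to convert the $L^\infty_{x,p,\beta}$ control on $\bar h$ into the norms of $\zeta$ required by Theorems \ref{thm:lin:existence} and \ref{thm:h:infty}. By Proposition \ref{prop:gamma}, $\|\nu^{-1}\zeta\|_{L^\infty_{x,p,\beta}} \le c\|\bar h\|_{L^\infty_{x,p,\beta}}^2$ directly since $e^{-\tau x}\le 1$. For the $L^2$ norm one uses the pointwise bound
\[
|\zeta(x,p)| \le c\,e^{-\tau x}\nu(p)\,p_0^{-\beta}\|\bar h\|_{L^\infty_{x,p,\beta}}^2,
\]
and \eqref{eq:nu} gives $\nu(p)\le c p_0^{a/2}$, so integrating in $x$ against $e^{-2\tau x}$ and in $p$ against $p_0^{-2\beta+a}$ produces a finite integral precisely when $\beta>(3+a)/2$, which is implied by $\beta>2$ since $a\le 2/3$ under our assumptions. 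The weighted norm $\|\nu^{-1/2}w_{-\varrho,\alpha}\zeta\|$ is handled identically, picking up the extra factor $w_{-2\varrho,2\alpha}\nu$ in the $p$-integral; because $\varrho<1/2$ the singularity at $p_1=0$ is integrable in three dimensions, and $\beta>2$ is more than enough to absorb the polynomial growth. Finally, the boundary term obeys $\langle \hat p_1 a_0, a_0\rangle_+^{1/2}\le c\|a_0\|_{L^\infty_{p,\beta}(\mathbb{R}^3_+)}$ for $\beta>3/2$, by the same weighted $L^\infty$-to-$L^2$ comparison used in \eqref{eq:a0}.

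Combining these with Theorem \ref{thm:h:infty} yields the a priori estimate
\[
\|T(\bar h)\|_{L^\infty_{x,p,\beta}} \le C\bigl(\|a_0\|_{L^\infty_{p,\beta}(\mathbb{R}^3_+)} + \|\bar h\|_{L^\infty_{x,p,\beta}}^2\bigr).
\]
Choose $c_0$ so small that $2Cc_0 < 1$ and $\epsilon_0$ so small that $2C\epsilon_0 \le c_0$; then $T$ maps the closed ball $\{\|h\|_{L^\infty_{x,p,\beta}}\le c_0\}$ into itself. For contraction, note that by bilinearity
\[
\Gamma(\bar h_1,\bar h_1) - \Gamma(\bar h_2,\bar h_2) = \Gamma(\bar h_1-\bar h_2,\bar h_1) + \Gamma(\bar h_2, \bar h_1-\bar h_2),
\]
so $T(\bar h_1)-T(\bar h_2)$ solves the linear problem with zero boundary data and a source bounded in the same three norms above by $c(\|\bar h_1\|+\|\bar h_2\|)\|\bar h_1-\bar h_2\|_{L^\infty_{x,p,\beta}}$. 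Applying Theorem \ref{thm:h:infty} again gives $\|T(\bar h_1)-T(\bar h_2)\|_{L^\infty_{x,p,\beta}} \le 2Cc_0\|\bar h_1-\bar h_2\|_{L^\infty_{x,p,\beta}}$, a strict contraction by the choice of $c_0$. The Banach fixed-point theorem then delivers a unique solution $h$ in the ball.

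The main obstacle will be the $L^2$-type norms appearing in Theorem \ref{thm:h:infty}: the quadratic nonlinearity $\Gamma$ inherits the $\nu(p)\sim p_0^{a/2}$ growth from Proposition \ref{prop:L}, so the polynomial weight $p_0^{-2\beta}$ must more than compensate, and simultaneously the weight $w_{-\varrho,\alpha}$ introduces a singularity at $p_1=0$ that must remain integrable in $\mathbb{R}^3$. The balance between these constraints fixes the threshold $\beta>2$ (sharpened in the remark to $\beta>\frac{3+2(1-\eta)+a/2}{2}$). Once this bookkeeping is completed, the rest of the argument is a standard small-data fixed-point scheme.
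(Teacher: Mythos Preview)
Your proposal is correct and follows essentially the same route as the paper: bound the three norms of $\zeta=e^{-\tau x}\Gamma(\bar h,\bar h)$ appearing in Theorem~\ref{thm:h:infty} quadratically by $\|\bar h\|_{L^\infty_{x,p,\beta}}$, feed them into \eqref{lem:h:beta} to obtain $\|T(\bar h)\|\le C(\|a_0\|+\|\bar h\|^2)$, and close by Banach fixed point. One point you should make explicit (the paper does): the weighted $L^2$ bound on $\nu^{-1/2}w_{-\varrho,\alpha}\zeta$ requires $\beta>\frac{3+2\alpha+a/2}{2}$, and this is covered by $\beta>2$ only after choosing $\alpha$ as close to $1-\eta$ as possible within the admissible window $[1-\eta,\eta/2]$ of Theorem~\ref{thm:h:infty}; with $\alpha$ near $\eta/2$ the threshold can exceed $2$.
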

\begin{proof}
    Using the property of $\Gamma(h_1, h_2)$ in 
    Proposition \ref{prop:gamma}, it holds for any $\beta \ge 0$ that
    \begin{equation}\label{eq:nu:zeta}
        \|\nu^{-1}\zeta\|_{L_{x,p,\beta}^\infty } = \|e^{-\tau x}\nu^{-1} \Gamma(h)\|_{L_{x,p,\beta}^\infty }\le c\|h\|_{L_{x,p,\beta}^\infty }^2,
    \end{equation}
    and for any $\beta>\frac{3+a}{2}$, 
    \begin{equation}\label{eq:zeta:es}
    \begin{aligned}
        \|\zeta\|=\|e^{-\tau x} \Gamma(h)\|&=(\int_0^\infty \int_{\mathbb{R}^3} e^{-2\tau x}\Gamma^2(h)dpdx)^\frac{1}{2}\\
        &\le \|\nu^{-1}\Gamma(h) \|_{L_{x,p,\beta}^\infty } (\int_0^\infty \int_{\mathbb{R}^3} e^{-2\tau x} p_0^{-2\beta}\nu^{2}dpdx)^\frac{1}{2}\\
        &\le c\|h\|_{L_{x,p,\beta}^\infty }^2.
    \end{aligned}
    \end{equation}

    We will choose $\alpha \in [1-\eta,\eta/2]$ later. For any $0<\varrho<\frac{1}{2}$, $\beta>\frac{3+2\alpha+\frac{a}{2}}{2}$, we have that
    \begin{equation}\label{eq:zeta:nu:weight:es}
    \begin{aligned}
        \| \nu^{-\frac{1}{2}} w_{-\varrho,\alpha} \zeta \|^2 = \|\nu^{-\frac{1}{2}}w_{-\varrho,\alpha}e^{-\tau x}\Gamma(h)\|^2 & = \int_0^\infty \int_{\mathbb{R}^3} \nu^{-1}w_{-2\varrho,2\alpha}e^{-2\tau x}\Gamma^2(h)dpdx \\
        & \le \|\nu^{-1}\Gamma(h) \|_{L_{x,p,\beta}^\infty }^2 \cdot \int_0^\infty \int_{\mathbb{R}^3} \nu e^{-2\tau x}w_{-2\varrho,2\alpha}p_0^{-2\beta} dpdx \\
        &\le c\|h\|_{L_{x,p,\beta}^\infty }^4. 
    \end{aligned}
    \end{equation}

    By \eqref{eq:ab} and \eqref{eq:eta}, it is easy to see that $a/4 \le 1-\eta\le \alpha$. Then
    \[
        \max \{\frac{3+a}{2}, \frac{3+2\alpha+\frac{a}{2}}{2}\} = \frac{3+2\alpha+\frac{a}{2}}{2}. 
    \] 
    By \eqref{eq:ab}, \eqref{eq:gamma} and \eqref{eq:eta}, it holds when $\eta \in [\frac{2}{3}, 1]$ that $0\le 1-\eta \le \frac{1}{3}$ and $0 \le a \le \frac{2}{3}$. 
    
    For any $\beta>2$, there exists a suitable small $\varepsilon > 0$ such that $\beta > 2+\varepsilon$. 
    Chose $\alpha$ as close to $1-\eta$ as possible, such that $1-\eta \le \alpha < 1-\eta +\varepsilon$, then 
    \[
        \frac{3+2\alpha+\frac{a}{2}}{2} < \frac{3 + 2(1 - \eta + \varepsilon) + \frac{a}{2}}{2} \le 2 + \varepsilon < \beta. 
    \]
    Now all above restrictions on $\beta$ hold. 

    Therefore, by choosing a fixed $0<\varrho<\frac{1}{2}$ and $\alpha$ close to $1-\eta$, and substituting \eqref{eq:nu:zeta}, \eqref{eq:zeta:es}, \eqref{eq:zeta:nu:weight:es} into \eqref{lem:h:beta}, we obtain that
    \begin{equation}\label{eq:h:beta:two}
        \|h\|_{L^\infty_{x,p,\beta}}
        \le c(\|h\|_{L_{x,p,\beta}^\infty }^2+\| a_0(p) \|_{ L^\infty_{p,\beta} (\mathbb{R}^3_+)}).
    \end{equation}
    When $\| a_0(p) \|_{ L^\infty_{p,\beta} (\mathbb{R}^3_+)}$ is sufficiently small, the contraction mapping theorem and \eqref{eq:h:beta:two} yield the existence of the unique solution to \eqref{eq:h:2}, \eqref{eq:zeta}.
\end{proof}

\begin{corollary}
    Suppose that the scattering kernel $\sigma$ satisfies \eqref{eq:sigma}, \eqref{eq:ab} and \eqref{eq:gamma}, $\eta$ defined in \eqref{eq:eta} belongs to $[\frac{2}{3}, 1]$, $\beta > 2$. There exist positive constants $\tau$, $\gamma$, $\epsilon_0$, $c_0$, such that for any $F_0$ satisfying
    \[
        |F_0(p)-J_\infty(p)|\le \epsilon_0 W_{\beta}(p),
    \]
    there exists a unique solution $F$ to problem \eqref{eq:F:damping}, which satisfies
    \[
        |F(x,p)-J_\infty(p)|\le c_0 e^{-\tau x} W_{\beta}(p).
    \]
\end{corollary}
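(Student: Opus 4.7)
The corollary is essentially a re-translation of Theorem \ref{thm:non:exist} from the rescaled unknown $h$ back to the original distribution $F$, so the plan is to retrace the reductions made in Section \ref{sec:elp} and verify that the norms transform as expected.

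First I would set up the change of variables explicitly. Writing $F(x,p) = J_\infty(p) + W_0(p) f(x,p)$ reduces \eqref{eq:F:damping} to the problem
\[
\hat{p}_1 \partial_x f = L(f) + \Gamma(f,f) - \gamma P_0^+ \hat{p}_1 f
\]
with boundary datum $a_0 := W_0^{-1}(F_0 - J_\infty)$ and far-field condition $f \to 0$. Substituting $f = e^{-\tau x} h$ further reduces this to the system \eqref{eq:h:2}, \eqref{eq:zeta}, which is precisely the problem handled by Theorem \ref{thm:non:exist}. Since $W_0 = J_\infty^{1/2}$ is strictly positive, the correspondences $F \leftrightarrow h$ and $F_0 \leftrightarrow a_0$ are one-to-one, so existence/uniqueness for one problem is equivalent to existence/uniqueness for the other.

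Next I would check that the hypothesis on $F_0$ implies the hypothesis on $a_0$ used in Theorem \ref{thm:non:exist}. From the definition $W_\beta = p_0^{-\beta} J_\infty^{1/2} = p_0^{-\beta} W_0$, the assumed bound $|F_0 - J_\infty| \le \epsilon_0 W_\beta$ gives
\[
|a_0(p)| = W_0^{-1}(p) |F_0(p) - J_\infty(p)| \le \epsilon_0 p_0^{-\beta}, \qquad p_1 > 0,
\]
so that $\|a_0\|_{L^\infty_{p,\beta}(\mathbb{R}^3_+)} \le \epsilon_0$. Choosing $\epsilon_0$ smaller than the threshold provided by Theorem \ref{thm:non:exist}, the theorem yields $\tau, \gamma > 0$ and a unique $h$ solving \eqref{eq:h:2}, \eqref{eq:zeta} with $\|h\|_{L^\infty_{x,p,\beta}} \le c_0$.

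Finally I would unwind the transformations to recover the claimed decay for $F$. Using $F - J_\infty = W_0 e^{-\tau x} h$ and the pointwise bound $|h(x,p)| \le c_0 p_0^{-\beta}$, we get
\[
|F(x,p) - J_\infty(p)| \le c_0 e^{-\tau x} p_0^{-\beta} W_0(p) = c_0 e^{-\tau x} W_\beta(p),
\]
which is the desired estimate. Uniqueness of $F$ in this class follows from uniqueness of $h$ via the bijective correspondence. There is no real obstacle here beyond bookkeeping: the substantive work (the weighted $L^\infty$ bound and the contraction argument) has already been carried out in Theorem \ref{thm:h:infty} and Theorem \ref{thm:non:exist}; one only needs to verify that the weight $W_\beta$ is compatible with the $p_0^{-\beta}$ weight used for $h$, which is precisely the identity $W_\beta = p_0^{-\beta} W_0$.
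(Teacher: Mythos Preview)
Your proposal is correct and matches the paper's approach: the corollary is stated in the paper without an explicit proof, being an immediate translation of Theorem \ref{thm:non:exist} back to the original unknown $F$ via the substitutions $F = J_\infty + W_0 f$, $f = e^{-\tau x} h$, and $a_0 = W_0^{-1}(F_0 - J_\infty)$. The only content is the weight identity $W_\beta = p_0^{-\beta} W_0$, which you have verified.
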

\begin{remark}
    If $\mathcal{M}^\infty < -1$, then $n^+=0$, and $P_0^+ \equiv 0$. The corollary presents the portion of Theorem \ref{thm:main:result} for the case where $\mathcal{M}^\infty < -1$. 
\end{remark}

\section{Existence of the problem without damping}\label{sec:without:damping}

In this section, we will discuss the existence of solutions to the problem without damping under suitable solvability conditions. After establishing the existence and energy estimates of the solutions to the problem with damping in the $L^2 \cap L^\infty$ space, the proof of existence for the solutions to the undamped problem proceeds by following the same strategy as in \cite{Ukai:Yang:Yu}. 

By Theorem \ref{thm:lin:existence}, define the linear operator $\mathbb{I}^\gamma: L_{\hat{p}_1,+}^2 \rightarrow L_{|\hat{p}_1|}^2$ for sufficiently small $\gamma>0$ as follows:
\[
    \mathbb{I}^\gamma(a_0) := f(0,\cdot), 
\]
where $f(x,p)$ is the solution of
\begin{equation}\label{eq:lin:ope}
    \left\{ \begin{array}{ll}
    	\hat{p}_1 f_x=Lf-\gamma P^+_0 \hat{p}_1 f,\quad & x\in \mathbb{R}^+,p\in\mathbb{R}^3, \\
    	f(0,p)=a_0(p),& p_1\ge0,\\
        \displaystyle \lim_{x\to +\infty} f = 0,  & p\in \mathbb{R}^3,
    \end{array} \right.
\end{equation}
and $f(0,\cdot)$ represents the mapping $p\mapsto f(0,p)$ for $p\in\mathbb{R}^3$. 

Similarly, we define the nonlinear operator $\mathbb{J}^\gamma: L_{\hat{p}_1,+}^2 \cap L_{p,\beta}^\infty(\mathbb{R}^3_+) \rightarrow L_{|\hat{p}_1|}^2 $ through: 
\[
    \mathbb{J}^\gamma(a_0) := f(0,\cdot), 
\]
where $f(x,p)$ is the solution of 
\begin{equation}\label{eq:non:ope}
    \left\{ \begin{array}{ll}
    	\hat{p}_1 f_x=Lf-\gamma P^+_0 \hat{p}_1 f +\Gamma(f,f),\quad & x\in \mathbb{R}^+,p\in\mathbb{R}^3\\
    	f(0,p)=a_0(p),& p_1\ge0,\\
        \displaystyle \lim_{x\to +\infty} f = 0,  & p\in \mathbb{R}^3.
    \end{array} \right.
\end{equation}

Employing the energy estimates in Theorem \ref{thm:lin:existence} and \eqref{eq:h:beta:two} in Theorem \ref{thm:non:exist} to the linear and nonlinear relativistic Boltzmann equation with damping, we can establish the following lemma concerning $\mathbb{I}^\gamma$ and $\mathbb{J}^\gamma$.
\begin{lemma}
    Suppose that $\gamma$ is sufficiently small and $\beta>2$. Then the solution operator $\mathbb{I}^\gamma$ and $\mathbb{J}^\gamma$ satisfy: 
    
    {\rm (i)} $\mathbb{I}^\gamma :a_0 \in L_{\hat{p}_1,+}^2 \mapsto \mathbb{I}^\gamma(a_0) \in L_{|\hat{p}_1|}^2$ is bounded. 

    {\rm (ii)} $\mathbb{J}^\gamma:a_0 \in L_{\hat{p}_1,+}^2 \cap L_{p,\beta}^\infty(\mathbb{R}^3_+) \mapsto \mathbb{J}^\gamma(a_0) \in L_{|\hat{p}_1|}^2 $ is bounded, if $a_0$ is sufficiently small. 
\end{lemma}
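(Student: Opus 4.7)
The plan is to reduce both claims to the two existence/estimate results proved earlier: Theorem \ref{thm:lin:existence} for the linear problem with a source $\zeta$, and Theorem \ref{thm:non:exist} for the nonlinear problem. The key observation is that the substitution $f = e^{-\tau x} h$ converts \eqref{eq:lin:ope} into \eqref{eq:h:2} with $\zeta = 0$, and converts \eqref{eq:non:ope} into \eqref{eq:h:2} with $\zeta = e^{-\tau x}\Gamma(h,h)$; this is already the setup of Sections \ref{sec:elp}--\ref{sec:enp}. In both cases $f(0,\cdot) = h(0,\cdot) = h^0$, so the target norm decomposes as
\[
    \|f(0,\cdot)\|_{L^2_{|\hat{p}_1|}}^2
    = \langle \hat{p}_1 a_0, a_0\rangle_+ + \langle |\hat{p}_1| h^0, h^0\rangle_-,
\]
where the incoming piece is precisely what the energy estimate \eqref{eq:L2:estimate} controls.

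For part (i), with $\zeta = 0$, Theorem \ref{thm:lin:existence} supplies a unique $h\in L^2_{x,p}$ solving \eqref{eq:h:2} with the trace bound $\langle|\hat{p}_1|h^0,h^0\rangle_- \le c\langle\hat{p}_1 a_0,a_0\rangle_+$. Combined with the identity above this yields
\[
    \|\mathbb{I}^\gamma(a_0)\|_{L^2_{|\hat{p}_1|}}^2 \le (1+c)\,\langle \hat{p}_1 a_0, a_0\rangle_+ = (1+c)\,\|a_0\|_{L^2_{\hat{p}_1,+}}^2,
\]
proving boundedness of the linear operator $\mathbb{I}^\gamma$.

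For part (ii), assume $\|a_0\|_{L^\infty_{p,\beta}(\mathbb{R}^3_+)}< \epsilon_0$, so that Theorem \ref{thm:non:exist} produces a unique solution $h$ of \eqref{eq:h:2}--\eqref{eq:zeta} with $\|h\|_{L^\infty_{x,p,\beta}} < c_0$. With $\zeta = e^{-\tau x}\Gamma(h,h)$, Proposition \ref{prop:gamma} gives $\|\nu^{-1}\zeta\|_{L^\infty_{x,p,\beta}} \le c\|h\|_{L^\infty_{x,p,\beta}}^2$, and then, exactly as in estimate \eqref{eq:zeta:es}, the choice $\beta > 2 > (3+a)/2$ converts this pointwise bound into $\|\zeta\|\le c\|h\|_{L^\infty_{x,p,\beta}}^2 \le c c_0^2$. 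Applying \eqref{eq:L2:estimate} to this $h$ (now treating $\zeta$ as the prescribed source) gives
\[
    \langle|\hat{p}_1|h^0,h^0\rangle_- \le c\bigl(\|\zeta\|^2 + \langle\hat{p}_1 a_0,a_0\rangle_+\bigr)
    \le c\bigl(\|h\|_{L^\infty_{x,p,\beta}}^4 + \|a_0\|_{L^2_{\hat{p}_1,+}}^2\bigr),
\]
and since $\|h\|_{L^\infty_{x,p,\beta}}\le c \|a_0\|_{L^\infty_{p,\beta}(\mathbb{R}^3_+)}$ by iterating \eqref{eq:h:beta:two} in the small-data regime, the same trace decomposition as above yields
\[
    \|\mathbb{J}^\gamma(a_0)\|_{L^2_{|\hat{p}_1|}}^2 \le c\bigl(\|a_0\|_{L^\infty_{p,\beta}(\mathbb{R}^3_+)}^4 + \|a_0\|_{L^2_{\hat{p}_1,+}}^2\bigr),
\]
which is uniformly bounded on any fixed small ball of $L^2_{\hat{p}_1,+}\cap L^\infty_{p,\beta}(\mathbb{R}^3_+)$.

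There is no substantive obstacle here: the lemma is essentially a repackaging of the boundary-trace part of the energy estimate together with the nonlinear a priori bound of Theorem \ref{thm:non:exist}. The only care needed is in (ii), where one must check that the source $\zeta = e^{-\tau x}\Gamma(h,h)$ generated by the nonlinear solution indeed lies in $L^2_{x,p}$ with norm controlled by the data; this is handled by combining Proposition \ref{prop:gamma} with the decay factor $e^{-\tau x}$, which is precisely the role of the damping-induced exponential weight introduced in Section \ref{sec:elp}.
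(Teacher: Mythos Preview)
Your proposal is correct and follows exactly the approach the paper indicates: the paper itself gives no detailed proof, merely stating that the lemma follows from the energy estimates in Theorem~\ref{thm:lin:existence} and the bound \eqref{eq:h:beta:two} in Theorem~\ref{thm:non:exist}. You have accurately filled in those details, including the trace decomposition $\|f(0,\cdot)\|_{L^2_{|\hat p_1|}}^2 = \langle \hat p_1 a_0,a_0\rangle_+ + \langle |\hat p_1|h^0,h^0\rangle_-$ and the reduction of the nonlinear case to the linear estimate via the source bound \eqref{eq:zeta:es}.
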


The following theorem ensures that the solution to the relativistic Boltzmann equation with damping term is precisely the same as the one without damping term, if an implicit solvability condition \eqref{eq:sol:con:non} on the boundary value holds. 
\begin{theorem}\label{thm:persistance:init:cond}
    Suppose that 
    \begin{equation}\label{eq:sol:con:non}
        P_0^+ \hat{p}_1 \mathbb{J}^\gamma(a_0) \equiv 0    
    \end{equation}
    for some $\gamma>0$. Then any solution of \eqref{eq:f} is a solution of \eqref{eq:linearized:F}. 
\end{theorem}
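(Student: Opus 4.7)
The plan is to show that for any solution $f$ of the damped problem \eqref{eq:f} satisfying the hypothesis \eqref{eq:sol:con:non}, the damping term $\gamma P_0^+ \hat{p}_1 f$ vanishes identically on $[0,\infty) \times \mathbb{R}^3$. Once that is established, $f$ automatically satisfies \eqref{eq:linearized:F}, since the two equations differ only by this term and share the same boundary and far-field conditions.

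To carry this out, I would introduce the auxiliary function
\[
    \phi(x) := P_0^+ \hat{p}_1 f(x,\cdot),
\]
which, for each $x \ge 0$, takes values in the finite-dimensional range of $P_0^+$ (a subspace of $\mathcal{N}$). The key computation is to apply the operator $P_0^+$ to both sides of \eqref{eq:f}. Since $L$ maps $L^2_p$ into $\mathcal{N}^\perp$ and $\Gamma(f,f)$ also lies in $\mathcal{N}^\perp$ by Proposition \ref{prop:gamma}, while $P_0^+$ factors as $P_0^+ = P_0^+ P_0$, we have $P_0^+ L f = 0$ and $P_0^+ \Gamma(f,f) = 0$. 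Using also that $P_0^+$ is an orthogonal projection (hence idempotent), together with the fact that $P_0^+$ commutes with $\partial_x$, the equation collapses to the finite-dimensional linear ODE
\[
    \partial_x \phi(x) = -\gamma\, \phi(x).
\]

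The initial value is $\phi(0) = P_0^+ \hat{p}_1 f(0,\cdot) = P_0^+ \hat{p}_1 \mathbb{J}^\gamma(a_0) \equiv 0$ by the solvability assumption \eqref{eq:sol:con:non}. Solving the ODE gives $\phi(x) = e^{-\gamma x}\phi(0) \equiv 0$ for all $x\ge 0$, which is consistent with the far-field decay $f(x,\cdot) \to 0$ as $x \to \infty$. Therefore the damping term $\gamma P_0^+ \hat{p}_1 f$ vanishes for all $x$, and the equation \eqref{eq:f} reduces exactly to \eqref{eq:linearized:F}, completing the proof.

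There is no real obstacle in this argument; the only technical point is that the manipulations require the solution $f$ to have a well-defined $\partial_x f$ for almost every $(x,p)$ (so that the ODE for $\phi$ makes sense), which was already established in the course of proving Theorem \ref{thm:lin:existence} and Theorem \ref{thm:non:exist}. The proof is essentially an identity that propagates the solvability condition imposed at the boundary to all $x > 0$ via the damping-induced ODE.
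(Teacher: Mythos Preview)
Your proposal is correct and follows essentially the same route as the paper: apply $P_0^+$ to the equation, use that $L f$ and $\Gamma(f,f)$ lie in $\mathcal{N}^\perp$ so they are annihilated, obtain the scalar ODE $\partial_x(P_0^+\hat p_1 f) = -\gamma P_0^+\hat p_1 f$, and conclude from the vanishing initial value. The paper's argument is identical, only writing the projection step as $P_0^+ P_0$ and then simplifying via $P_0^+ P_0 P_0^+ = P_0^+ P_0 = P_0^+$, which is exactly your idempotence/factoring observation.
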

\begin{proof}
    We project \eqref{eq:non:ope} to its macroscopic component. Since $L$ and $\Gamma$ are microscopic quantities, we have that
    \[
        \partial_x P_0^+ P_0 \hat{p}_1 f = -\gamma P_0^+ P_0 P_0^+\hat{p}_1 f.
    \]
    Since projection operators $P_0$ and $P_0^+$ satisfy
    \[
        P_0^+ P_0 P_0^+ = P_0^+ P_0 = P_0^+,
    \]
    then we have a linear differential equation for $P_0^+ \hat{p}_1 f$,
    \[
        \partial_x P_0^+ \hat{p}_1 f = -\gamma P_0^+ \hat{p}_1 f.  
    \]
    Therefore,
    \[
        P_0^+ \hat{p}_1 f(x,p) = P_0^+ \hat{p}_1 f(0,p) e^{-\gamma x}.
    \]

    If the boundary condition satisfies $P_0^+ \hat{p}_1 f|_{x=0}=0$, then we have that
    \[
        P_0^+ \hat{p}_1 f(x,p) \equiv 0,\quad x\ge0.
    \]
    That is to say, the solution of \eqref{eq:non:ope} is a solution of \eqref{eq:linearized:F} under the condition \eqref{eq:sol:con:non}.
\end{proof}

Similarly, we have the following theorem on the linearized equation.
\begin{theorem}
    Suppose that
    \begin{equation}\label{eq:sol:con:lin}
        P_0^+ \hat{p}_1 \mathbb{I}^\gamma(a_0) \equiv 0    
    \end{equation}
    for some $\gamma>0$. Then any solution of \eqref{eq:lin:ope} is a solution of 
    \begin{equation}\label{eq:lin:nodamping}
    \left\{ 
        \begin{array}{ll}
            \hat{p}_1 f_x=Lf,\quad & x\in \mathbb{R}^+,p\in\mathbb{R}^3, \\
            f(0,p)=a_0(p),& p_1\ge0, \\
            \displaystyle \lim_{x\to +\infty} f(x,p) = 0,  & p\in \mathbb{R}^3. 
        \end{array} 
        \right.
    \end{equation}
\end{theorem}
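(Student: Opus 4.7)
The argument will parallel the proof of Theorem \ref{thm:persistance:init:cond} almost verbatim, simply omitting the nonlinear term. The plan is to project the damped linear equation \eqref{eq:lin:ope} onto the macroscopic null space $\mathcal{N}$ via $P_0$ and then onto its positive part via $P_0^+$, thereby extracting a scalar ODE in $x$ for $P_0^+ \hat{p}_1 f$ whose solution decays exponentially and must vanish identically once the solvability condition \eqref{eq:sol:con:lin} is imposed.

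First I would apply $P_0$ to the equation in \eqref{eq:lin:ope}. Since $\mathcal{N} = \ker L$ and $L$ is self-adjoint on $L^2_p$ by Proposition \ref{prop:L}, the range of $L$ lies in $\mathcal{N}^\perp$, hence $P_0 L f = 0$. Next, applying $P_0^+$ and using the identities $P_0^+ P_0 = P_0^+$ and $P_0^+ P_0 P_0^+ = P_0^+$, which hold because $P_0^+$ is a projection onto a subspace of $\mathcal{N}$ per the decomposition \eqref{eq:contruct:damping}, I obtain the linear ODE
\[
    \partial_x \bigl( P_0^+ \hat{p}_1 f \bigr) = -\gamma\, P_0^+ \hat{p}_1 f,
\]
which integrates to $P_0^+ \hat{p}_1 f(x, p) = P_0^+ \hat{p}_1 f(0, p)\, e^{-\gamma x}$. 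Invoking the hypothesis $P_0^+ \hat{p}_1 \mathbb{I}^\gamma(a_0) \equiv 0$, which by the very definition of $\mathbb{I}^\gamma$ states that $P_0^+ \hat{p}_1 f(0,\cdot) \equiv 0$, I conclude $P_0^+ \hat{p}_1 f(x,p) \equiv 0$ for all $x \ge 0$. The damping term in \eqref{eq:lin:ope} therefore vanishes identically, so $f$ satisfies $\hat{p}_1 f_x = L f$ together with the unchanged boundary value $a_0$ at $x=0$ and the far-field condition at infinity, i.e.\ $f$ solves \eqref{eq:lin:nodamping}.

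The only subtlety is the justification of the formal projection step: one needs enough regularity of $f$ in $x$ to commute $P_0^+$ (which acts only in $p$) with $\partial_x$, and enough integrability to make the $L^2_p$-inner products defining $P_0 L f = 0$ meaningful pointwise in $x$. Both are supplied by the energy estimates for the damped linear problem established in Section \ref{sec:elp}, most notably the bound $\|\nu^{-1/2}|\hat{p}_1| h_x\|<\infty$ from \eqref{eq:L2:estimate}, which allows the projection to be carried out in the sense of distributions in $x$ and then interpreted pointwise a.e. Thus no new analytic input is required beyond what has already been developed, and the proof reduces to the formal computation above.
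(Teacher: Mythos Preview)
Your proposal is correct and follows exactly the approach the paper intends: the paper does not give a separate proof for this linear statement but simply says it holds ``similarly'' to Theorem~\ref{thm:persistance:init:cond}, and your argument reproduces that proof with the nonlinear term $\Gamma$ omitted. The added remark on regularity is a reasonable justification of the formal projection step, though the paper itself does not pause to make it explicit.
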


Next, by analyzing the solvability conditions \eqref{eq:sol:con:lin} for the linear equation and \eqref{eq:sol:con:non} for the nonlinear equation separately, we will classify the boundary conditions. 

\subsection{Classification of $P_0^+ \hat{p}_1 \mathbb{I}^\gamma(a_0) = 0$}

Since $\mathbb{I}^\gamma$ is a bounded linear operator, the function $P_0^+ \hat{p}_1 \mathbb{I}^\gamma(a_0)$ is a bounded linear map from $L_{\hat{p}_1,+}^2$ to a finite dimensional space. According to the Mach number of the far field Maxwellian, we have the following theorem on the co-dimensions of the boundary value satisfying \eqref{eq:sol:con:lin}. 

\begin{theorem}\label{lem:non:codi}
    The classification of the boundary value which satisfies the solvability condition \eqref{eq:sol:con:lin} can be summarized in Table \ref{tab_lin}.
    \begin{table}[H]
    \renewcommand\arraystretch{1.25}
    \setlength{\abovecaptionskip}{-0.1cm}
    \setlength{\belowcaptionskip}{0.2cm}
        \centering
        \caption{Classification of the solvability condition for the linear problem}
        \label{tab_lin}
        \begin{tabular}{ll}
        \toprule
             $\mathcal{M}^\infty<-1$ \quad & $\codim (\{a_0 \in L_{\hat{p}_1,+}^2 \mid P_0^+ \hat{p}_1 \mathbb{I}^\gamma(a_0) = 0\})=0$   \\
             $-1<\mathcal{M}^\infty<0$  & $\codim (\{a_0 \in L_{\hat{p}_1,+}^2 \mid P_0^+ \hat{p}_1 \mathbb{I}^\gamma(a_0) = 0\})=1$   \\
             $0<\mathcal{M}^\infty<1$  & $\codim (\{a_0 \in L_{\hat{p}_1,+}^2 \mid P_0^+ \hat{p}_1 \mathbb{I}^\gamma(a_0) = 0\})=4$   \\
             $\mathcal{M}^\infty>1$  & $\codim (\{a_0 \in L_{\hat{p}_1,+}^2 \mid P_0^+ \hat{p}_1 \mathbb{I}^\gamma(a_0) = 0\})=5$\\   
        \bottomrule
        \end{tabular}
    \end{table}
\end{theorem}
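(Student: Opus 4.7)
The plan is to reduce the statement to a finite-dimensional rank computation for a single bounded linear operator. Write $R^\gamma := P_0^+ \hat{p}_1 \mathbb{I}^\gamma$, which by the preceding lemma is a bounded linear map from $L_{\hat{p}_1,+}^2$ into the finite-dimensional target $\mathcal{N}^+ := \mathrm{Range}(P_0^+) \subset \mathcal{N}$. Then
\[
    \{a_0 \in L_{\hat{p}_1,+}^2 \mid P_0^+ \hat{p}_1 \mathbb{I}^\gamma(a_0) = 0\} = \ker R^\gamma,
\]
and $\codim(\ker R^\gamma) = \dim \mathrm{Range}(R^\gamma) \le \dim \mathcal{N}^+$. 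The theorem therefore decomposes into two tasks: (a) identify $\dim \mathcal{N}^+$ in each regime of $\mathcal{M}^\infty$, and (b) prove that $R^\gamma$ is surjective onto $\mathcal{N}^+$.

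Task (a) is immediate from the eigenvalue formula \eqref{eq:lambda} together with Lemma \ref{lem:Mach:number}. The triple eigenvalue $\lambda_{2,3,4} = \mathfrak{c} u_{\infty,1}/u_0$ carries the sign of $u_{\infty,1}$, while $\lambda_1$ and $\lambda_5$ change sign exactly at $u_{\infty,1} = c_\infty$ and $u_{\infty,1} = -c_\infty$ respectively. By Remark \ref{rmk:Mach:number}, none of the eigenvalues vanish in the admissible regime $\mathcal{M}^\infty \ne 0, \pm 1$, so $P_0^+$ is well defined on each stratum. A direct sign-chart analysis gives $\dim \mathcal{N}^+ = 0, 1, 4, 5$ as $\mathcal{M}^\infty$ lies in $(-\infty,-1), (-1,0), (0,1), (1,\infty)$ respectively, matching the table entries.

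Task (b) is the main obstacle, and I would address it following the strategy of Ukai--Yang--Yu. Note first that when $\mathcal{M}^\infty < -1$ the claim is vacuous since $\mathcal{N}^+ = \{0\}$. For the remaining regimes, I would argue by contradiction: suppose the range of $R^\gamma$ is a proper subspace of $\mathcal{N}^+$; then there exists $\chi \in \mathcal{N}^+ \setminus \{0\}$ such that $\langle \chi, R^\gamma(a_0) \rangle = 0$ for every $a_0 \in L_{\hat{p}_1,+}^2$. Testing the linear damped equation in \eqref{eq:lin:ope} against such a $\chi$, integrating over $(0,\infty) \times \mathbb{R}^3$, and using (i) $L\chi = 0$ since $\chi \in \mathcal{N}$, (ii) the decay identity $P_0^+ \hat{p}_1 f(x,\cdot) = P_0^+ \hat{p}_1 f(0,\cdot) e^{-\gamma x}$ extracted from the proof of Theorem \ref{thm:persistance:init:cond}, and (iii) the far-field condition $f \to 0$, reduces the bulk terms to boundary-only expressions. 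The resulting identity yields $\langle \hat{p}_1 \chi, f(0,\cdot) \rangle = 0$. Splitting the boundary integral into $p_1 > 0$ and $p_1 < 0$ parts then converts the orthogonality hypothesis into a constraint of the form $\langle \hat{p}_1 \chi, a_0 \rangle_+ + \langle \hat{p}_1 \chi, f(0,\cdot)\rangle_- = 0$ for all $a_0$.

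The hard part is to exploit the remaining freedom in $a_0$ to conclude $\chi = 0$. For each eigenvector $\chi_j$, $j \in I^+$, I would test the identity along a family of outgoing data approximating $\chi_j \mathbf{1}_{p_1 > 0}$; the trace estimate \eqref{eq:L2:estimate} of Theorem \ref{thm:lin:existence} ensures continuity of the incoming reflection map $a_0 \mapsto f(0,\cdot)|_{p_1<0}$, so the contribution of the outgoing integral $\langle \hat{p}_1 \chi, \chi_j \mathbf{1}_{p_1>0} \rangle$ can be made to dominate for small enough $\gamma$. This reduces surjectivity to the nonvanishing of a finite determinant built from the moments $\langle \hat{p}_1 \chi_i, \chi_j \mathbf{1}_{p_1>0} \rangle$, $i, j \in I^+$; the latter is controlled by the explicit structure of the matrix $B$ in \eqref{eq:B:2} and the positivity of the eigenvalues $\{\lambda_j\}_{j \in I^+}$. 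Once surjectivity is in hand, $\dim \mathrm{Range}(R^\gamma) = n^+$, and the codimension counts in Table \ref{tab_lin} follow.
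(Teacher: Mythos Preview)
Your reduction to the rank computation and the sign-chart analysis in task~(a) are both correct and match the paper. The difficulty is entirely in task~(b), and your proposed mechanism there has a real gap.

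The step where you ``test the linear damped equation against $\chi$'' is circular: integrating \eqref{eq:lin:ope} against $\chi\in\mathcal{N}^+$ over $(0,\infty)\times\mathbb{R}^3$ and using the decay identity yields precisely $\langle \hat{p}_1 f(0,\cdot),\chi\rangle = \langle R^\gamma(a_0),\chi\rangle$, which is just the identity $\langle \hat{p}_1 f(0,\cdot),\chi\rangle = \langle \hat{p}_1 f(0,\cdot),P_0^+\chi\rangle$ and carries no new information. More importantly, your claim that for the data $a_0=\Xi_j\mathbf{1}_{p_1>0}$ the outgoing moment $\langle \hat{p}_1\chi,\Xi_j\rangle_+$ dominates the incoming contribution $\langle \hat{p}_1\chi,f(0,\cdot)\rangle_-$ ``for small enough $\gamma$'' is not supported by the trace estimate \eqref{eq:L2:estimate}: that estimate gives only $\langle |\hat p_1|f^0,f^0\rangle_- \le c\,\langle \hat p_1 a_0,a_0\rangle_+$ with a constant $c$ that is \emph{not} small in $\gamma$. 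So the reflected part is $O(1)$, not $O(\gamma)$, and nothing forces the half-space Gram determinant $\det(\langle \hat p_1\Xi_i,\Xi_j\mathbf{1}_{p_1>0}\rangle)_{i,j\in I^+}$ to control the full matrix $\langle \Xi_i,R^\gamma(\Xi_j\mathbf{1}_{p_1>0})\rangle$.

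What the paper (and Ukai--Yang--Yu) actually do is supply the missing smallness via an explicit ansatz. For each $i\in I^+$ they observe that $j_i^\gamma(x,p):=e^{-\gamma x}\Xi_i(p)$ solves the damped equation up to a \emph{microscopic} residual of size $O(\gamma)$, namely $\hat p_1\partial_x j_i^\gamma - Lj_i^\gamma + \gamma P_0^+\hat p_1 j_i^\gamma = -\gamma e^{-\gamma x} P_1\hat p_1\Xi_i$. The correction $k_i^\gamma$ absorbing this residual (with zero incoming data) then satisfies $\langle |\hat p_1|k_i^{\gamma,0},k_i^{\gamma,0}\rangle_- = O(\gamma^2)$ by Theorem~\ref{thm:lin:existence}, since the source has norm $O(\gamma)$. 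Hence the true solution with incoming data $\Xi_i|_{p_1>0}$ satisfies $R^\gamma(\Xi_i|_{p_1>0}) = P_0^+\hat p_1\Xi_i + O(\gamma) = \lambda_i\Xi_i + O(\gamma)$, which is nonzero for $\gamma$ small and immediately yields surjectivity. The ansatz $e^{-\gamma x}\Xi_i$ is exactly the ingredient that converts the $O(1)$ reflection bound into an $O(\gamma)$ perturbation; without it, your argument does not close.
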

\begin{proof}
    Let $\Xi_i$, $i=1,\cdots,5$, be the eigenvectors of the operator $B$, which is defined in \eqref{eq:B} or \eqref{eq:B:2}, on $\mathcal{N}$. Then by \eqref{eq:lambda}, 
    \[
        B \Xi_i = \lambda_i \Xi_i,  
    \]
    where 
    \[
        \lambda_1 = \frac{a_3u_0u_{\infty,1}-\mathfrak{c}^2M}{\mathfrak{c} A_2},\qquad \lambda_2 =\lambda_3=\lambda_4= \mathfrak{c}\frac{u_{\infty,1}}{u_0} ,\qquad 
        \lambda_5 = \frac{a_3u_0u_{\infty,1}+\mathfrak{c}^2M}{\mathfrak{c} A_2},
    \]
    and $M,a_3,A_2$ are given in \eqref{eq:M}, \eqref{eq:a3}, \eqref{eq:A2}, respectively.
    We aim to demonstrate that the dimension of the non-trivial solution $a_0$ to 
    \begin{equation}\label{eq:no:zero}
        P_0^+ \hat{p}_1 \mathbb{I}^\gamma (a_0) \ne 0    
    \end{equation}
    is exactly $n^+$. Since $P_0^+ \hat{p}_1 \mathbb{I}^\gamma$ is a bounded linear operator from $L_{\hat{p}_1,+}^2$ to $\mathbb{R}^{n^+}$, the dimension of non-trivial solution is at most $n^+$. Thus finding $n^+$ independent non-trivial solutions for \eqref{eq:no:zero} is sufficient.

    For this, we introduce auxiliary functions
    \[
        j_i^\gamma := e^{-\gamma x} \Xi_i(p), \quad i=1,\cdots,5.
    \]

    When $\mathcal{M}^\infty<-1$, the matrix $P_0^+$ is a zero matrix, i.e. $\dim (P_0^+)=0$. Therefore, condition $\eqref{eq:sol:con:lin}$ holds for any $a_0 \in L_{\hat{p}_1,+}^2$. Therefore, no additional conditions need to be imposed on $a_0$. That is to say
    \[
        \codim ( \{ a_0 \in L_{\hat{p}_1,+}^2 \mid P_0^+ \hat{p}_1 \mathbb{I}^\gamma(a_0) = 0\})=0.
    \]

    When $-1<\mathcal{M}^\infty<0$, the range of $P_0^+$ is spanned by $\Xi_5$. Therefore, $\dim(P_0^+)=n^+=1$.
    Direct calculation reveals that function $j_5^\gamma$ satisfies
    \begin{equation}
    \begin{aligned}
        \hat{p}_1 \partial_x j_5^\gamma - Lj_5^\gamma + \gamma P_0^+ \hat{p}_1 j_5^\gamma & = -\gamma \hat{p}_1 e^{-\gamma x} \Xi_5(p) + \gamma e^{-\gamma x} P_0^+ \hat{p}_1 \Xi_5(p)\\
        & = -\gamma e^{-\gamma x} (P_0 + P_1) \hat{p}_1  \Xi_5(p) + \gamma e^{-\gamma x} P_0^+ \hat{p}_1 \Xi_5(p)\\
        & = -\gamma e^{-\gamma x} P_1 \hat{p}_1 \Xi_5(p).
    \end{aligned}
    \end{equation}

    Let $J_5^\gamma (x,p) := j_5^\gamma (x,p) + k_5^\gamma (x,p)$ be a solution of 
    \[
        \left\{ 
        \begin{array}{ll}
            \hat{p}_1 \partial_x J_5^\gamma - LJ_5^\gamma + \gamma P^+_0 \hat{p}_1 J_5^\gamma = 0,\quad & x\in \mathbb{R}^+,p\in\mathbb{R}^3\\
            J_5^\gamma(0,p) = j_5^\gamma(0,p),& p_1\ge0.
        \end{array} 
        \right.
    \]
    Thus, $k_5^\gamma$ satisfies the following equation
    \[
        \left\{ 
        \begin{array}{ll}
            \hat{p}_1 \partial_x k_5^\gamma - Lk_5^\gamma + \gamma P^+_0 \hat{p}_1 k_5^\gamma = \gamma e^{-\gamma x} P_1 \hat{p}_1 \Xi_5(p),\quad & x\in \mathbb{R}^+,p\in\mathbb{R}^3\\
            k_5^\gamma(0,p) = 0,& p_1 \ge 0.
        \end{array} 
        \right.
    \]
    Let $k_5^\gamma = e^{-\tau x} l_5^\gamma$, we have that
    \[
        \left\{ 
        \begin{array}{ll}
            \hat{p}_1 \partial_x l_5^\gamma - \tau \hat{p}_1 l_5^\gamma - Ll_5^\gamma + \gamma P^+_0 \hat{p}_1 l_5^\gamma = \gamma e^{-(\gamma - \tau ) x} P_1 \hat{p}_1 \Xi_5(p),\quad & x\in \mathbb{R}^+,p\in\mathbb{R}^3\\
            l_5^\gamma(0,p) = 0,& p_1\ge0.
        \end{array} 
        \right.
    \]
    Note that if we let $\zeta=\gamma e^{-(\gamma - \tau ) x} P_1 \hat{p}_1 \Xi_5(p)$, then the equation for $l_5^\gamma$ is  the one for $h$ in \eqref{eq:h:2}.
    By selecting the appropriate $\tau$ and $\gamma$ to satisfy that
    \[
        |\gamma|, |\tau| \ll 1,\quad \gamma = O(1) \tau, \quad \text{for some } O(1) > 2, 
    \]
    we obtain from the estimate in Theorem \ref{thm:lin:existence} that
    \begin{equation}\label{eq:est:J}
    \begin{aligned}
        \| P_0^+ \hat{p}_1 J_5^\gamma\| \mid_{x=0} 
        = & \| P_0^+ \hat{p}_1 (j_5^\gamma + k_5^\gamma)\| \mid_{x=0} \\ 
        \ge & \| P_0^+ \hat{p}_1 j_5^\gamma\| |_{x=0} - \|P_0^+ \hat{p}_1 l_5^\gamma\||_{x=0} \\ 
        \ge & \| P_0^+ \hat{p}_1 j_5^\gamma \| |_{x=0} - c \langle |\hat{p}_1|l_5^{\gamma,0}, l_5^{\gamma,0} \rangle_-^{1/2} \\
        \ge & \| P_0^+ \hat{p}_1 j_5^\gamma \||_{x=0} - C \gamma \ne 0. 
    \end{aligned}
    \end{equation}
    If we choose $a_0(p)=\Xi_5(p)$ for $p_1 \ge 0$, then \eqref{eq:est:J} implies that for sufficient small $\gamma$,
    \[
        P_0^+ \hat{p}_1 \mathbb{I}^\gamma (a_0) \ne 0.
    \]
    Therefore, $\langle \Xi_5,P_0^+ \hat{p}_1 \mathbb{I}^\gamma (a_0) \rangle$ defines a non-trivial bounded functional from $L_{\hat{p}_1,+}^2$ to $\mathbb{R}$. By Riesz representation theorem, there exists $r_5 \in L_{\hat{p}_1,+}^2$ such that
    \[
        \langle \Xi_5,P_0^+ \hat{p}_1 \mathbb{I}^\gamma (a_0) \rangle = \langle r_5,\hat{p}_1 a_0 \rangle_+.
    \]
    This shows, when $-1<\mathcal{M}^\infty<0$, that 
    \[
        \codim ( \{a_0 \in L_{\hat{p}_1,+}^2 \mid P_0^+ \hat{p}_1 \mathbb{I}^\gamma(a_0) = 0\})=1.
    \]

    When $0<\mathcal{M}^\infty<1$, $ \dim ( P_0^+ ) = 4 $. The range of $P_0^+$ is spanned by $\{ \Xi_2, \Xi_3, \Xi_4, \Xi_5\}$. Then $P_0^+ \hat{p}_1 \mathbb{I}^\gamma (a_0)$ can be written as
    \[
        P_0^+ \hat{p}_1 \mathbb{I}^\gamma (a_0) = \sum_{i=2}^{5}{\frac{\langle \Xi_i,P_0^+ \hat{p}_1 \mathbb{I}^\gamma (a_0) \rangle}{\langle \Xi_i,\Xi_i \rangle}\Xi_i}.
    \]
    Similar to the construction of $r_5$ in the case $-1<\mathcal{M}^\infty<0$, we can use $j_i^\gamma$, for $i=2,\cdots,5$, to find $r_i$ such that $\langle \Xi_i,P_0^+ \hat{p}_1 \mathbb{I}^\gamma (a_0) \rangle$ is a bounded functional from $L_{\hat{p}_1,+}^2$ to $\mathbb{R}$, and 
    \begin{equation}\label{eq:class:inn:product}
        \langle \Xi_i,P_0^+ \hat{p}_1 \mathbb{I}^\gamma (a_0) \rangle = \langle r_i,\hat{p}_1 a_0 \rangle_+, \qquad i=2,\cdots,5. 
    \end{equation}
    This shows, when $0<\mathcal{M}^\infty<1$, that 
    \[
        \codim ( \{a_0 \in L_{\hat{p}_1,+}^2 \mid P_0^+ \hat{p}_1 \mathbb{I}^\gamma(a_0) = 0\}) = 4.
    \]

    Similar to the discussion above, we can prove that when $\mathcal{M}^\infty>1$, the theorem still holds.
\end{proof}

\subsection{Classification of $P_0^+ \hat{p}_1 \mathbb{J}^\gamma(a_0) = 0$}

When $\mathcal{M}^\infty < -1$, $P_0^+ \equiv 0$. Therefore, we only need to consider the case when $\mathcal{M}^\infty > -1$. 

In order to classify $P_0^+ \hat{p}_1 \mathbb{J}^\gamma(a_0) = 0$ from the corresponding linear operator, we can consider the Frech\'{e}t derivative of $P_0^+ \hat{p}_1 \mathbb{J}^\gamma(a_0)$ and show that it is non-trivial in a space of dimension $n^+$. For this, we normalize the vectors $r_i$, $i=1,\cdots,5$, obtained for the linear operator $P_0^+ \hat{p}_1 \mathbb{I}^\gamma(a_0)$, such that
\[
    \langle r_i, \hat{p}_1 r_j \rangle_+ = \delta_{ij}, 
\]
if such $r_i$ exists. 

From the analysis in the above subsection, the boundary data $a_0$ can be decomposed as in Table \ref{tab_non_decom}. 

\begin{table}[H]\label{tab:lin}
    \renewcommand\arraystretch{1.25}
    \setlength{\abovecaptionskip}{-0.1cm}
    \setlength{\belowcaptionskip}{0.2cm}
        \centering
        \caption{Decomposition of $a_0$}
        \label{tab_non_decom}
        \begin{tabular}{lll}
        \toprule
             $-1<\mathcal{M}^\infty<0$  & $a_0=\langle r_5,\hat{p}_1a_0\rangle_+r_5+s$ & $\langle s,\hat{p}_1 r_5\rangle_+=0$   \\
             $0<\mathcal{M}^\infty<1$  & $a_0=\sum_{i=2}^{5}{\langle r_i,\hat{p}_1a_0\rangle_+r_i+s}$ & $\langle s,\hat{p}_1 r_i\rangle_+=0, i=2,\cdots,5 $   \\
             $\mathcal{M}^\infty>1$  & $a_0=\sum_{i=1}^{5}{\langle r_i,\hat{p}_1a_0\rangle_+r_i+s}$ & $\langle s,\hat{p}_1 r_i\rangle_+=0, i=1,\cdots,5 $\\   
        \bottomrule
        \end{tabular}
\end{table}

According to the decomposition in Table \ref{tab_non_decom}, we can denote 
\[
    a_0 := \sum_{i=6-n^+}^5 t_i r_i + s, \qquad \langle s, \hat{p}_1 r_i \rangle_+ = 0, \quad i = 6 - n^+, \cdots, 5,
\]
reparameterize $\mathbb{J}^\gamma(a_0)$ and define a new operator $ \bar{\mathbb{J}}^\gamma $ as in Table \ref{tab_non_para}. 

\begin{table}[H]
    \renewcommand\arraystretch{1.25}
    \setlength{\abovecaptionskip}{-0.1cm}
    \setlength{\belowcaptionskip}{0.2cm}
        \centering
        \caption{Reparameterization of $\mathbb{J}^\gamma$}
        \label{tab_non_para}
        \begin{tabular}{lll}
        \toprule
             $-1<\mathcal{M}^\infty<0$  & $\mathbb{J}^\gamma(a_0) =: \bar{\mathbb{J}}^\gamma(t_5,s)$ & $t_5\in\mathbb{R},s\in r_5^\bot$   \\
             $0<\mathcal{M}^\infty<1$  & $\mathbb{J}^\gamma(a_0) =: \bar{\mathbb{J}}^\gamma(t_2,\cdots,t_5,s)$ & $t_2,\cdots,t_5\in\mathbb{R},s\in \bigcap_{i=2}^{5}{r_i^\bot}$   \\
             $\mathcal{M}^\infty>1$  & $\mathbb{J}^\gamma(a_0) =: \bar{\mathbb{J}}^\gamma(t_1,\cdots,t_5,s)$ & $t_1,\cdots,t_5\in\mathbb{R},s\in \bigcap_{i=1}^{5}{r_i^\bot}$\\   
        \bottomrule
        \end{tabular}
\end{table}

We have the following lemma on the Frech\'{e}t derivative of $P_0^+ \hat{p}_1 \mathbb{J}^\gamma(a_0)$.

\begin{lemma}\label{lem:non:fre}
    Referring to Table \ref{tab_non_para}. There exists a small neighborhood $U \subset L_{\hat{p}_1,+}^2 \cap L_{p,\beta}^\infty(\mathbb{R}^3_+)$, $O\in U$, $\beta>2$, such that for any function $s\in U$, there exists a continuously differentiable function $a_0(s)$ such that $a_0 \in L_{\hat{p}_1,+}^2 \cap L_{p,\beta}^\infty(\mathbb{R}^3_+)$ and $P_0^+ \hat{p}_1 \mathbb{J}^\gamma(a_0)=0$. 
\end{lemma}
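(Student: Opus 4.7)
The plan is to apply the implicit function theorem in the Banach space setting. Using the decomposition from Table \ref{tab_non_decom}, I would parametrize the problem by $(t, s) \in \mathbb{R}^{n^+} \times V$, where $V := \bigcap_{i=6-n^+}^{5} r_i^\perp$ (perpendicularity measured by the pairing $\langle \cdot, \hat{p}_1 \cdot \rangle_+$) inherits the $L^2_{\hat{p}_1,+} \cap L^\infty_{p,\beta}(\mathbb{R}^3_+)$ topology, and $\mathbb{R}^{n^+}$ is identified with the range of $P_0^+$ in $\mathcal{N}$, spanned by the positive-eigenvalue eigenvectors $\{\Xi_i\}_{i=6-n^+}^{5}$ of $B$. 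Define
\[
    F(t, s) := \Big( \langle \Xi_i,\, P_0^+ \hat{p}_1 \bar{\mathbb{J}}^\gamma(t, s) \rangle \Big)_{i = 6-n^+}^{5} \in \mathbb{R}^{n^+}.
\]
The lemma reduces to solving $F(t(s), s) = 0$ for a $C^1$ map $s \mapsto t(s)$ near the origin with $t(0) = 0$, after which $a_0(s) := \sum_i t_i(s) r_i + s$ is the desired function.

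First, $F(0, 0) = 0$ is immediate from $\bar{\mathbb{J}}^\gamma(0, 0) = 0$, which is the uniqueness assertion of Theorem \ref{thm:non:exist} with zero data. Second, since $\Gamma$ is bilinear and Proposition \ref{prop:gamma} gives $\|\nu^{-1}\Gamma(h_1, h_2)\|_{L^\infty_{x,p,\beta}} \le c \|h_1\|_{L^\infty_{x,p,\beta}} \|h_2\|_{L^\infty_{x,p,\beta}}$, I would show that $\mathbb{J}^\gamma$ is continuously Frechet differentiable in a neighborhood of $0$ with $D\mathbb{J}^\gamma(0) = \mathbb{I}^\gamma$. The difference $D_h := \mathbb{J}^\gamma(a_0 + h) - \mathbb{J}^\gamma(a_0) - \mathbb{I}^\gamma(h)$ satisfies a linearized problem whose source is a bilinear expression in $h$ and in the solutions to the full nonlinear problem; the energy estimate of Theorem \ref{thm:lin:existence} and the weighted $L^\infty$-estimate of Theorem \ref{thm:h:infty} then bound $D_h$ quadratically in $h$, yielding the claimed differentiability and the identification with $\mathbb{I}^\gamma$ at $a_0 = 0$.

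The decisive computation is the Jacobian $\partial_t F(0, 0)$. By the chain rule,
\[
    \partial_{t_j} F_i(0, 0) = \langle \Xi_i,\, P_0^+ \hat{p}_1 \mathbb{I}^\gamma(r_j) \rangle,
\]
which by the representation formula \eqref{eq:class:inn:product} equals $\langle r_i, \hat{p}_1 r_j \rangle_+ = \delta_{ij}$ under the normalization chosen just before the lemma. Hence $\partial_t F(0, 0) = I_{n^+}$ is invertible, and the Banach-space implicit function theorem delivers the required $C^1$ map $t(s)$.

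The main obstacle is the rigorous verification that $\mathbb{J}^\gamma$ is $C^1$ as a map from $L^2_{\hat{p}_1,+} \cap L^\infty_{p,\beta}(\mathbb{R}^3_+)$ into $L^2_{|\hat{p}_1|}$ with $D\mathbb{J}^\gamma(0) = \mathbb{I}^\gamma$; this requires reprising the estimates of Sections \ref{sec:elp}--\ref{sec:we} at the level of difference quotients while simultaneously tracking the $L^2$-trace and the weighted $L^\infty$-bounds, so that the solution map inherits joint continuity and differentiability in both the linear and quadratic inputs. Once this smoothness is in hand, the remainder is a routine application of the implicit function theorem.
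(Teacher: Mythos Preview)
Your proposal is correct and follows essentially the same approach as the paper: parametrize $a_0$ by $(t,s)$, observe $\bar{\mathbb{J}}^\gamma(0,0)=0$, identify $D\mathbb{J}^\gamma(0)=\mathbb{I}^\gamma$ so that $\partial_{t_j}\langle\Xi_i,P_0^+\hat{p}_1\bar{\mathbb{J}}^\gamma\rangle|_{(0,0)}=\langle r_i,\hat{p}_1 r_j\rangle_+=\delta_{ij}$ via \eqref{eq:class:inn:product} and the normalization of the $r_i$, and then invoke the implicit function theorem. The paper treats the $C^1$ verification more briefly than you do (appealing to Theorems \ref{thm:lin:existence} and \ref{thm:non:exist}), but the logical structure is identical.
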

\begin{proof}
    It is obvious that $\mathbb{J}^\gamma(0)=0$, and 
    \[
        \frac{d}{d\epsilon}\mathbb{J}^\gamma(\epsilon a_0) \bigg|_{\epsilon=0}=\mathbb{I}^\gamma(a_0).
    \]
    From this, we have 
    \begin{equation}\label{eq:par:non}
        \frac{d}{d\epsilon}P_0^+ \hat{p}_1\mathbb{J}^\gamma(\epsilon a_0) \bigg|_{\epsilon=0} = P_0^+ \hat{p}_1\mathbb{I}^\gamma(a_0). 
    \end{equation}
    For $a_0$ suitably small, Theorem \ref{thm:lin:existence} and \ref{thm:non:exist} implies that $P_0^+ \hat{p}_1\mathbb{J}^\gamma$ is continuously differentiable on $a_0$. 
    Denote $\mathcal{W} := L_{\hat{p}_1,+}^2 \cap L_{p,\beta}^\infty(\mathbb{R}^3_+)$. 

    We consider the case of $0 < \mathcal{M}^\infty < 1$ for illustration. Then $n^+=4$, and $P_0^+ \hat{p}_1 \bar{\mathbb{J}}^\gamma$ is a map from $\mathbb{R}^{4} \times \mathcal{W}$ to $\mathbb{R}^{4}$ where
    \[
        P_0^+ \hat{p}_1 \bar{\mathbb{J}}^\gamma (t_2,t_3,t_4,t_5,s) 
        = P_0^+ \hat{p}_1 \mathbb{J}^\gamma (\sum_{i=2}^5 t_i r_i + s). 
    \] 

    Obviously, $\bar{\mathbb{J}}^\gamma(0) = 0$. From \eqref{eq:par:non}, we have 
    \[
        \frac{d}{d t_i} P_0^+ \hat{p}_1 \bar{\mathbb{J}}^\gamma \big|_{(0,0)} = P_0^+ \hat{p}_1 \mathbb{I}^\gamma (r_i), \qquad i=2,\cdots,5. 
    \]
    Then by \eqref{eq:class:inn:product}
    \[
        \frac{d}{d t_i} \langle \Xi_j, P_0^+ \hat{p}_1 \bar{\mathbb{J}}^\gamma \rangle \big|_{(0,0)} = \langle \Xi_j, P_0^+ \hat{p}_1 \mathbb{I}^\gamma (r_i) \rangle 
        = \langle r_j,\hat{p}_1 r_i \rangle_+ = \delta_{ij}. 
    \]
    Therefore, the Jacobian matrix 
    \[
        \frac{\partial (P_0^+ \hat{p}_1 \bar{\mathbb{J}}^\gamma)}{ \partial (t_2,t_3,t_4,t_5)}
    \]
    is invertible. 
    
    By the implicit function theorem, there exists a neighborhood $U$ of zero, such that for any $s\in U$, there exist a unique continuously differentiable function $t_i(s)$, $i=2,\cdots,5$, such that 
    \[
        P_0^+ \hat{p}_1 \bar{\mathbb{J}}^\gamma(t_2(s), t_3(s), t_4(s), t_5(s), s)\equiv 0. 
    \]
    Hence $a_0 = \sum_{i=2}^{5} t_i(s) r_i + s$ is the desired function of $s$. Note that the smallness of $s$ implies the smallness of $t_i$, as well as the smallness of $a_0$. This shows the theorem holds for $0 < \mathcal{M}^\infty < 1$. Similar to the discussion above, we can prove that when $-1 < \mathcal{M}^\infty <0$ or $\mathcal{M}^\infty>1$, the lemma still holds. 
\end{proof}
\begin{theorem}\label{thm:non:classi}
    The classification of boundary value near the far field Maxwellian state that satisfies the solvability condition \eqref{eq:sol:con:non} can be summarized as in Table \ref{tab_non}. 
    \begin{table}[H]\label{tab:non}
        \renewcommand\arraystretch{1.25}
        \setlength{\abovecaptionskip}{-0.1cm}
        \setlength{\belowcaptionskip}{0.2cm}
        \centering
        \caption{Classification of the solvability conditions for the nonlinear problem}
        \label{tab_non}
        \begin{tabular}{ll}
            \toprule
            $\mathcal{M}^\infty<-1$ \quad & $ \codim (\{a_0 \in L_{\hat{p}_1,+}^2\cap L_{p,\beta}^\infty(\mathbb{R}^3_+) \mid P_0^+ \hat{p}_1 \mathbb{J}^\gamma(a_0) = 0\})=0$   \\
            $-1<\mathcal{M}^\infty<0$  & $ \codim (\{a_0 \in L_{\hat{p}_1,+}^2 \cap L_{p,\beta}^\infty(\mathbb{R}^3_+)\mid P_0^+ \hat{p}_1 \mathbb{J}^\gamma(a_0) = 0\})=1$   \\
            $0<\mathcal{M}^\infty<1$  & $ \codim (\{a_0 \in L_{\hat{p}_1,+}^2 \cap L_{p,\beta}^\infty(\mathbb{R}^3_+) \mid P_0^+ \hat{p}_1 \mathbb{J}^\gamma(a_0) = 0\})=4$   \\
            $\mathcal{M}^\infty>1$  & $ \codim (\{a_0 \in L_{\hat{p}_1,+}^2 \cap L_{p,\beta}^\infty(\mathbb{R}^3_+) \mid P_0^+ \hat{p}_1 \mathbb{J}^\gamma(a_0) = 0\})=5$   \\
            \bottomrule
        \end{tabular}
    \end{table}
\end{theorem}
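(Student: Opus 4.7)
The plan is to reduce Theorem \ref{thm:non:classi} directly to the classification already obtained for the linearized operator in Theorem \ref{lem:non:codi} together with the implicit function construction in Lemma \ref{lem:non:fre}. First, I would dispose of the case $\mathcal{M}^\infty<-1$: here $P_0^+\equiv 0$ by \eqref{eq:contruct:damping}, so the solvability condition $P_0^+\hat{p}_1\mathbb{J}^\gamma(a_0)=0$ is vacuous and the codimension is $0$. In all remaining regimes the number $n^+$ of nontrivial positive eigenvalues of $B$ equals $1$, $4$, $5$ respectively, and this is precisely the number we must recover.

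For $\mathcal{M}^\infty\in(-1,0)\cup(0,1)\cup(1,\infty)$, I would use the splitting
\[
    a_0=\sum_{i=6-n^+}^{5}t_i r_i + s,\qquad \langle s,\hat{p}_1 r_j\rangle_+=0,\ j=6-n^+,\ldots,5,
\]
from Table \ref{tab_non_decom}, where the $r_i$ are the $L^2_{\hat{p}_1,+}$-vectors given by Riesz representation in the proof of Theorem \ref{lem:non:codi}, normalized so that $\langle r_i,\hat{p}_1 r_j\rangle_+=\delta_{ij}$. The orthogonality conditions defining the admissible $s$ already cut out a subspace of codimension $n^+$ in $L^2_{\hat{p}_1,+}\cap L^\infty_{p,\beta}(\mathbb R^3_+)$. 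Lemma \ref{lem:non:fre}, obtained by applying the implicit function theorem to $\bar{\mathbb{J}}^\gamma$ using the invertibility of the Jacobian
\[
    \frac{\partial}{\partial t_i}\langle\Xi_j,P_0^+\hat{p}_1\bar{\mathbb{J}}^\gamma\rangle\Big|_{(0,0)}=\delta_{ij},
\]
then supplies, in a neighborhood $U$ of $0$, a $C^1$ map $s\mapsto(t_{6-n^+}(s),\ldots,t_5(s))$ for which $P_0^+\hat{p}_1\mathbb{J}^\gamma\bigl(\sum_i t_i(s)r_i+s\bigr)=0$.

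Hence the zero set $\{a_0:P_0^+\hat{p}_1\mathbb{J}^\gamma(a_0)=0\}$ is, locally around $0$, the graph of this $C^1$ map over the codimension-$n^+$ subspace $\bigcap_{j=6-n^+}^{5}\{s:\langle s,\hat{p}_1 r_j\rangle_+=0\}$. Since graphs of continuously differentiable maps preserve codimension, the local codimension of this zero set in $L^2_{\hat{p}_1,+}\cap L^\infty_{p,\beta}(\mathbb R^3_+)$ equals $n^+$, which reproduces the table entry in each regime. The only point requiring slight care is to verify that the implicit function theorem is applied in the correct functional setting, so that the resulting $a_0$ lies in $L^2_{\hat{p}_1,+}\cap L^\infty_{p,\beta}(\mathbb R^3_+)$ (not merely in $L^2_{\hat{p}_1,+}$); this follows because the $r_i$ are smooth decaying functions, so $\sum_i t_i(s)r_i+s$ inherits the $L^\infty_{p,\beta}$ bound from $s$.

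The main obstacle, which was already handled by the preceding lemma, is the invertibility of the Jacobian: it ultimately relies on the estimate \eqref{eq:est:J}, i.e.\ on showing that the perturbation $k_i^\gamma$ built from $\gamma e^{-(\gamma-\tau)x}P_1\hat{p}_1\Xi_i$ is strictly smaller at the boundary than the explicit macroscopic part $j_i^\gamma$. This was achieved by choosing $\gamma=O(1)\tau$ with $O(1)>2$ and both small, invoking Theorem \ref{thm:lin:existence}. Once that step is granted, the present theorem is essentially a bookkeeping consequence of the implicit function theorem combined with the codimension count from the linear analysis.
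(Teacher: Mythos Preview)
Your proposal is correct and follows essentially the same approach as the paper, which simply states that the theorem is derived from Theorem \ref{lem:non:codi} and Lemma \ref{lem:non:fre}. You have spelled out in considerably more detail how those two ingredients combine (the trivial case $\mathcal{M}^\infty<-1$, the splitting of $a_0$, and the graph description of the zero set), and you also flag the regularity issue of $a_0\in L^\infty_{p,\beta}(\mathbb{R}^3_+)$, which the paper does not address explicitly.
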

\begin{proof}
    The theorem can be derived from the codimensionality of the boundary conditions for the linear problem in Theorem \ref{lem:non:codi} and Lemma \ref{lem:non:fre}.
\end{proof}

Theorem \ref{thm:main:result} can be derived directly from Theorem \ref{thm:non:exist}, \ref{thm:persistance:init:cond}, and \ref{thm:non:classi}. 

\appendix

\titleformat{\section}{\Large\normalfont\bfseries}{}{0pt}{}
\renewcommand\thesubsection{\Alph{subsection}}
\counterwithin{lemma}{subsection}
\counterwithin{definition}{subsection}

\section{Appendix}

\subsection{Lorentz transformation}\label{Lorentz}

Here we introduce some computations and lemmas related to the Lorentz transformation for completion. 

\begin{definition}
We say that $\Lambda$ is a \emph{Lorentz transformation} if 
\[
    P \cdot Q = (\Lambda P) \cdot (\Lambda Q),
\]
holds for any two four-vectors $P$ and $Q$. 
Equivalently, $\Lambda$ is a Lorentz transformation if
\[
    \Lambda^T D \Lambda = D,
\]
where $D$ is given in \eqref{eq:D}
\end{definition}

\begin{lemma}[\cite{Strain:applications}]\label{lem:LT:mul}
    If $\Lambda$ is a Lorentz transformation, then $\Lambda^T$ and $\Lambda^{-1}$ are also Lorentz transformations. If $\Lambda_1$ and $\Lambda_2$ are Lorentz transformations, then $\Lambda_1 \Lambda_2$ is a Lorentz transformation.  
\end{lemma}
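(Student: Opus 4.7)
The plan is to work entirely from the matrix-form characterization $\Lambda^T D \Lambda = D$ of a Lorentz transformation, since each of the three claims reduces to a short piece of linear algebra once this is taken as the definition. I would dispatch the composition statement first because it is a one-line computation: for Lorentz $\Lambda_1$ and $\Lambda_2$,
\[
    (\Lambda_1\Lambda_2)^T D (\Lambda_1\Lambda_2) = \Lambda_2^T \bigl(\Lambda_1^T D \Lambda_1\bigr) \Lambda_2 = \Lambda_2^T D \Lambda_2 = D,
\]
so closure under multiplication is immediate.

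For the inverse, the first step is to check invertibility: taking determinants of $\Lambda^T D \Lambda = D$ and using $\det D = -1 \neq 0$ gives $(\det\Lambda)^2 = 1$, so $\Lambda^{-1}$ exists. Then left-multiplying the defining identity by $(\Lambda^T)^{-1} = (\Lambda^{-1})^T$ and right-multiplying by $\Lambda^{-1}$ yields $(\Lambda^{-1})^T D \Lambda^{-1} = D$, which is exactly what is needed.

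The transpose is the least automatic step, and is the one place where one must exploit the special structure of $D$, namely the involution $D^2 = I$. Starting again from $\Lambda^T D \Lambda = D$, I would right-multiply by $D$ to get $\Lambda^T (D \Lambda D) = I$, which rearranges to $D \Lambda D = (\Lambda^T)^{-1}$. Transposing both sides then gives $D \Lambda^T D = \Lambda^{-1}$, i.e.\ $\Lambda^T = D \Lambda^{-1} D$, so
\[
    \Lambda\, D\, \Lambda^T = \Lambda\, D\, (D \Lambda^{-1} D) = \Lambda \Lambda^{-1} D = D,
\]
which is precisely $(\Lambda^T)^T D (\Lambda^T) = D$. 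No genuine obstacle arises anywhere in this proof; the only point requiring any thought is recognizing that $\Lambda^T$ and $\Lambda^{-1}$ are conjugate through $D$, which then collapses the transpose case to the (already handled) inverse case.
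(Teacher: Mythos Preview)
Your argument is correct and entirely standard. The paper does not prove this lemma at all: it is stated with a citation to \cite{Strain:applications} and no argument is given, so there is nothing in the paper's own treatment to compare against. Your choice to work directly from the matrix characterization $\Lambda^T D \Lambda = D$ together with the involution $D^2 = I$ is exactly the natural route, and each of the three steps (composition, inverse, transpose) is handled cleanly.
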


\begin{lemma}\label{lem:pbar}
    Suppose that $P$ and $Q$ are two four-vectors satisfying that $p_0^2 = |p|^2 + \mathfrak{c}^2$, $q_0^2 = |q|^2 + \mathfrak{c}^2$. 
    There exists a Lorentz transformation $\Lambda$, which is not unique, such that 
    \begin{equation}\label{eq:Lambda:P+Q}
        \Lambda(P+Q) = (\sqrt{s},0,0,0)^T, 
    \end{equation}
    where $s$ is defined by \eqref{eq:s}. Moreover, for any $\Lambda$ satisfying \eqref{eq:Lambda:P+Q}, it holds that $\Lambda(P-Q) = (0, \bar{p})^T$ for some $\bar{p}\in\mathbb{R}^3$ with $|\bar{p}| = 2g$, where $g$ is given by \eqref{eq:g}. 
\end{lemma}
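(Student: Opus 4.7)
The plan is to split the lemma into its two assertions and handle each by exploiting the Lorentz invariance of the inner product $\cdot$ rather than by picking coordinates. Throughout, the key identities I will use are $P\cdot P = p_0^2 - |p|^2 = \mathfrak{c}^2$ and $Q\cdot Q = \mathfrak{c}^2$, together with $(P+Q)\cdot(P+Q) = s$ from \eqref{eq:s} and $-(P-Q)\cdot(P-Q) = 4g^2$ from \eqref{eq:g}.

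For the existence of $\Lambda$ satisfying \eqref{eq:Lambda:P+Q}, I would observe that $U := P+Q$ is future-directed and timelike, since its time component $p_0+q_0 > 0$ and $U\cdot U = s = 4g^2 + 4\mathfrak{c}^2 > 0$ by \eqref{eq:sg}. Thus $U$ has the same Lorentz norm as the reference vector $(\sqrt{s},0,0,0)^T$. I would then write down the standard boost matrix associated to $U$: denoting $\vec{u} := (p+q)/(p_0+q_0)$ and $\gamma := (p_0+q_0)/\sqrt{s}$, the boost $\Lambda_U$ that takes $U$ into the rest frame $(\sqrt{s},0,0,0)^T$ has the explicit $4\times 4$ block form that one verifies directly is a Lorentz transformation (i.e., $\Lambda_U^T D \Lambda_U = D$). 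Non-uniqueness is then immediate: composing $\Lambda_U$ on the left with any spatial rotation $R \in \mathrm{SO}(3)$, embedded as $\mathrm{diag}(1,R)$, gives another Lorentz transformation carrying $U$ to $(\sqrt{s},0,0,0)^T$.

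For the second assertion I would avoid explicit computation entirely and use only the defining property of Lorentz transformations. Since $\Lambda$ preserves the Lorentz inner product, and since
\[
    (P+Q)\cdot(P-Q) = P\cdot P - Q\cdot Q = \mathfrak{c}^2 - \mathfrak{c}^2 = 0,
\]
I obtain
\[
    (\sqrt{s},0,0,0) \cdot \Lambda(P-Q) = 0.
\]
Writing $\Lambda(P-Q) =: (a,\bar p)^T$, the Lorentz inner product on the left reduces to $\sqrt{s}\, a$, forcing $a = 0$ because $\sqrt{s} > 0$. Hence $\Lambda(P-Q) = (0,\bar p)^T$ for some $\bar p \in \mathbb{R}^3$. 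For the norm of $\bar p$, I would invoke Lorentz invariance again:
\[
    -|\bar p|^2 = (0,\bar p)\cdot(0,\bar p) = \Lambda(P-Q)\cdot\Lambda(P-Q) = (P-Q)\cdot(P-Q) = -4g^2,
\]
so that $|\bar p| = 2g$, as required.

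I do not anticipate a genuine obstacle; the statement is a coordinate-free consequence of the two invariants $P\cdot P = Q\cdot Q = \mathfrak{c}^2$ and the orthogonality $(P+Q)\cdot(P-Q)=0$. The only mildly delicate point is making the construction of $\Lambda_U$ precise enough that the subsequent lemmas (e.g.\ Lemma~\ref{lem:tildepq:equiv}) can refer to it unambiguously, which I would address by fixing $\Lambda_U$ to be the canonical boost described above, noting explicitly that any other choice differs from it by a left multiplication by a spatial rotation (hence the non-uniqueness is exactly the rotational freedom in $\mathbb{R}^3$, a fact that will later justify the freedom in choosing $\omega$ in the center-of-momentum parametrization).
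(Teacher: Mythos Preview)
Your proposal is correct and uses the same core idea as the paper: both arguments rest entirely on Lorentz invariance of the inner product and the identity $|\bar p|^2 = -(P-Q)\cdot(P-Q) = 4g^2$. The one noteworthy difference is in how the vanishing of the time component of $\Lambda(P-Q)$ is obtained. You use the orthogonality $(P+Q)\cdot(P-Q) = P\cdot P - Q\cdot Q = 0$ directly, which immediately forces $\sqrt{s}\,a = 0$. The paper instead computes $(\Lambda P)^0 = \sqrt{s}/2$ from $P\cdot(P+Q) = \mathfrak{c}^2 + P\cdot Q = s/2$ and then subtracts. Your route is shorter and makes the underlying reason (Lorentz-orthogonality of $P+Q$ and $P-Q$ on the mass shell) more transparent; the paper's route has the minor side benefit of also recording $(\Lambda P)^0 = (\Lambda Q)^0 = \sqrt{s}/2$, though that is not used elsewhere. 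On existence, your explicit boost construction is more complete than the paper's, which simply posits such a $\Lambda$ and then identifies the parameter $a$.
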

\begin{proof}
    Let $\Lambda$ be a Lorentz transformation such that $\Lambda(P+Q) = (a,0,0,0)^T$ for some $a$, we have that
    \[
        a^2 = \Lambda(P+Q) \cdot \Lambda(P+Q) = (P+Q) \cdot (P+Q). 
    \]
    Therefore
    \[
        a = \sqrt{(P+Q) \cdot (P+Q)} = \sqrt{2 \mathfrak{c}^2 + 2 P\cdot Q} = \sqrt{s}. 
    \]
        
    On the other hand, 
    \[
        \Lambda P \cdot \Lambda (P+Q) = P \cdot (P+Q) = p_0 (p_0 + q_0) - p \cdot (p+q) = \mathfrak{c}^2 + P \cdot Q = \frac{1}{2} a^2,
    \]
    and 
    \[
        \Lambda P \cdot \Lambda (P+Q) = \Lambda P \cdot (a,0,0,0)^T = a (\Lambda P)^0. 
    \]
    Hence $(\Lambda P)^0 = \frac{a}{2}$. Therefore, we obtain that
    \[
        \Lambda (P-Q) = \Lambda (2P-(P+Q)) = 2 \Lambda P - (a,0,0,0)^T, 
    \]
    \[
        (\Lambda(P-Q))^0 = 2 (\Lambda P)^0 - a = 0. 
    \]
    Since we denote $\bar{p}$ through $\Lambda(P-Q) = (0, \bar{p})$, it yields by \eqref{eq:g} that 
    \[
        4 g^2 = - (P-Q) \cdot (P-Q) = - \Lambda(P-Q) \cdot \Lambda(P-Q) = |\bar{p}|^2. 
    \]
\end{proof}

\begin{lemma}\label{lem:post:tilde}
    Suppose that four-vectors $P,Q,P',Q'$ satisfy $P\cdot P = Q\cdot Q = P'\cdot P' = Q'\cdot Q' = \mathfrak{c}^2$ and $P+Q = P'+Q'$. 
    Then there exist a Lorentz transformation $\Lambda$ and $\omega\in\mathbb{S}^2$, such that $\Lambda(P+Q)=(\sqrt{s},0,0,0)$, $\Lambda(P-Q)=(0,\bar{p})$ and
    \[
        P' = \frac{1}{2} \Lambda^{-1} \begin{pmatrix} \sqrt{s} \\ 2 g \omega \end{pmatrix}, \qquad 
        Q' = \frac{1}{2} \Lambda^{-1} \begin{pmatrix} \sqrt{s} \\ -2 g \omega \end{pmatrix}. 
    \]
\end{lemma}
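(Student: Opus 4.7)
The plan is to apply Lemma \ref{lem:pbar} twice, first to the pair $(P,Q)$ and then to the pair $(P',Q')$, using the conservation law $P+Q = P'+Q'$ and the collision invariance of $g$ and $s$ (already noted after \eqref{eq:sg}) to recycle the same Lorentz transformation on both sides of the collision.

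\textbf{Step 1: Pick $\Lambda$ via Lemma \ref{lem:pbar}.} Lemma \ref{lem:pbar} supplies a Lorentz transformation $\Lambda$ such that $\Lambda(P+Q) = (\sqrt{s},0,0,0)^T$ and $\Lambda(P-Q) = (0,\bar{p})^T$ with $|\bar{p}| = 2g$. This handles the first two required identities.

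\textbf{Step 2: Transfer $\Lambda$ to the primed variables.} Because $P+Q = P'+Q'$ by the conservation law, the same $\Lambda$ automatically satisfies $\Lambda(P'+Q') = (\sqrt{s},0,0,0)^T$. The hypotheses $P'\cdot P' = Q'\cdot Q' = \mathfrak{c}^2$ let me apply the ``moreover'' clause of Lemma \ref{lem:pbar} with $(P',Q')$ in place of $(P,Q)$: for \emph{any} Lorentz transformation sending $P'+Q'$ to $(\sqrt{s},0,0,0)^T$, the image of $P'-Q'$ has vanishing time component. Applied to my $\Lambda$, this yields $\Lambda(P'-Q') = (0,\bar{p}')^T$ for some $\bar{p}'\in\mathbb{R}^3$ with $|\bar{p}'|^2 = -(P'-Q')\cdot(P'-Q') = 4\,g(P',Q')^2 = 4g^2$, using that $g$ is a collision invariant. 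Setting $\omega := \bar{p}'/(2g) \in \mathbb{S}^2$ then gives $\Lambda(P'-Q') = (0,2g\omega)^T$.

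\textbf{Step 3: Recover $P'$ and $Q'$.} Adding and subtracting the two identities from Step~2,
\[
    \Lambda P' = \tfrac{1}{2}\bigl[\Lambda(P'+Q') + \Lambda(P'-Q')\bigr] = \tfrac{1}{2}(\sqrt{s},\,2g\omega)^T,
\]
\[
    \Lambda Q' = \tfrac{1}{2}\bigl[\Lambda(P'+Q') - \Lambda(P'-Q')\bigr] = \tfrac{1}{2}(\sqrt{s},\,-2g\omega)^T.
\]
Applying $\Lambda^{-1}$, which exists and is itself a Lorentz transformation by Lemma \ref{lem:LT:mul}, produces the claimed formulas.

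The argument is essentially bookkeeping once Lemma \ref{lem:pbar} is in hand; the only conceptual point requiring care is the legitimacy of reusing the same $\Lambda$ for $(P',Q')$, which relies on the conservation identity $P+Q=P'+Q'$ together with the fact that Lemma \ref{lem:pbar} asserts its ``moreover'' conclusion for every $\Lambda$ boosting the total four-momentum to rest, not just for one distinguished choice. No further obstacle is anticipated.
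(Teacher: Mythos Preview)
Your proof is correct and follows essentially the same route as the paper's: invoke Lemma \ref{lem:pbar} to produce $\Lambda$, use $P+Q=P'+Q'$ to carry $\Lambda$ over to the primed pair, apply the ``moreover'' clause of Lemma \ref{lem:pbar} again to get $\Lambda(P'-Q')=(0,2g\omega)^T$, and then read off $P',Q'$. The paper's version is terser (it omits your Step~3 algebra and the explicit mention of collision invariance of $g$), but the argument is the same.
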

\begin{proof}
    By Lemma \ref{lem:pbar}, there exist a Lorentz transformation $\Lambda$ such that $\Lambda(P+Q)=(\sqrt{s},0,0,0)$ and $\Lambda(P-Q)=(0,\bar{p})$. Thus we have
    \[
        \Lambda (P'+Q') = \Lambda(P+Q)=(\sqrt{s},0,0,0). 
    \]
    From the proof of Lemma \ref{lem:pbar}, $\Lambda(P'-Q')=(0,\tilde{p})$ with $|\tilde{p}| = 2 g$. Denote $\tilde{p} = 2 g \omega$, $\omega\in\mathbb{S}^2$. This completes the proof of this lemma. 
\end{proof}

\begin{remark}\label{rmk:Lambda:U}
    For any four-vector $U$, there exists a Lorentz transformation, which is not necessarily unique and depends only on $U$, such that $\Lambda U = (\sqrt{U\cdot U}, 0, 0, 0)^T$. We denote such Lorentz transformation by $\Lambda_U$. 
\end{remark}

\begin{lemma}\label{lem:theta}
    Suppose that $P$ and $Q$ are two four-vectors satisfying that $p_0^2 = |p|^2 + \mathfrak{c}^2$, $q_0^2 = |q|^2 + \mathfrak{c}^2$. Let $\hat{\Lambda}$ be any Lorentz transformation, and $\omega\in\mathbb{S}^2$. Then $\Lambda_{\hat{\Lambda} (P+Q)} \hat{\Lambda} \Lambda_{P+Q}^{-1} (0,\omega)^T = (0,\tilde{\omega})^T$ for some $\tilde{\omega} \in \mathbb{S}^2$. 
\end{lemma}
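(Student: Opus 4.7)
The plan is to exploit the composition structure of $\Lambda := \Lambda_{\hat{\Lambda}(P+Q)} \, \hat{\Lambda} \, \Lambda_{P+Q}^{-1}$ and show directly that it fixes the timelike vector $(\sqrt{s},0,0,0)^T$; from there, preservation of both the time component and the Lorentz inner product will force the image of $(0,\omega)^T$ to have the claimed form.

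First I would apply Lemma \ref{lem:LT:mul} twice to conclude that $\Lambda$ is itself a Lorentz transformation, since it is a product of Lorentz transformations (using also that $\Lambda_{P+Q}^{-1}$ is Lorentz). Next I would compute its action on $(\sqrt{s},0,0,0)^T$: since $\Lambda_{P+Q}^{-1}(\sqrt{s},0,0,0)^T = P+Q$ by definition of $\Lambda_{P+Q}$, applying $\hat{\Lambda}$ gives $\hat{\Lambda}(P+Q)$, and applying $\Lambda_{\hat{\Lambda}(P+Q)}$ then produces $(\sqrt{(\hat{\Lambda}(P+Q))\cdot(\hat{\Lambda}(P+Q))},0,0,0)^T$. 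Since $\hat{\Lambda}$ preserves the Lorentz inner product, this equals $(\sqrt{(P+Q)\cdot(P+Q)},0,0,0)^T = (\sqrt{s},0,0,0)^T$. Hence $\Lambda$ fixes $(\sqrt{s},0,0,0)^T$.

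Now write $\Lambda (0,\omega)^T = (v_0, \tilde{\omega})^T$ for some $v_0 \in \mathbb{R}$ and $\tilde{\omega} \in \mathbb{R}^3$. I would use Lorentz-inner-product invariance against the fixed vector:
\[
\sqrt{s}\, v_0 = (\sqrt{s},0,0,0)\cdot(v_0,\tilde{\omega}) = \Lambda(\sqrt{s},0,0,0)^T \cdot \Lambda (0,\omega)^T = (\sqrt{s},0,0,0)\cdot(0,\omega) = 0,
\]
which forces $v_0 = 0$. Finally, invariance applied to $(0,\omega)^T$ itself gives
\[
-|\tilde{\omega}|^2 = (0,\tilde{\omega})\cdot(0,\tilde{\omega}) = (0,\omega)\cdot(0,\omega) = -|\omega|^2 = -1,
\]
so $|\tilde{\omega}| = 1$ and $\tilde{\omega} \in \mathbb{S}^2$ as claimed.

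There is no real obstacle here; the only subtlety is making sure the definitional property of $\Lambda_U$ from Remark \ref{rmk:Lambda:U} is applied correctly to $\hat{\Lambda}(P+Q)$, which requires invoking that $\hat{\Lambda}$ preserves $U\cdot U$. Everything else is a routine application of Lemma \ref{lem:LT:mul} and the defining identity $P\cdot Q = (\Lambda P)\cdot(\Lambda Q)$ of a Lorentz transformation.
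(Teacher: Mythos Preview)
Your proof is correct and shares the same key step as the paper's: both establish that the composite $M:=\Lambda_{\hat{\Lambda}(P+Q)}\hat{\Lambda}\Lambda_{P+Q}^{-1}$ fixes $(\sqrt{s},0,0,0)^T$, using the defining property of $\Lambda_U$ together with Lorentz invariance of $(P+Q)\cdot(P+Q)$. The only difference is in how the conclusion is drawn from this fact: the paper reads off that the first column of $M$ is $(1,0,0,0)^T$, then uses the column relations $M_i\cdot M_j=D_{ij}$ to infer that the first \emph{row} is also $(1,0,0,0)$ and the remaining $3\times3$ block is orthogonal, whence $M(0,\omega)^T=(0,\tilde\omega)^T$ with $\tilde\omega\in\mathbb{S}^2$. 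You instead pair $(0,\omega)^T$ against the fixed timelike vector and against itself using Lorentz-inner-product invariance, which is a bit more direct and avoids any matrix-block analysis; the paper's route, in exchange, yields the slightly stronger structural fact that $M$ acts as a spatial rotation.
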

\begin{proof}
    By definition of $\Lambda_{P+Q}$, we have that 
    \[
        \Lambda_{P+Q} (P+Q) = (\sqrt{(P+Q) \cdot (P+Q)}, 0, 0, 0)^T.  
    \]
    Hence, 
    \begin{align*}
        \Lambda_{\hat{\Lambda} (P+Q)} \hat{\Lambda} \Lambda_{P+Q}^{-1} (\sqrt{(P+Q) \cdot (P+Q)}, 0, 0, 0)^T = & \Lambda_{\hat{\Lambda} (P+Q)} \hat{\Lambda} (P+Q) \\
        = & (\sqrt{[\hat{\Lambda} (P+Q)]\cdot [\hat{\Lambda} (P+Q)]}, 0, 0, 0)^T \\
        = & (\sqrt{(P+Q) \cdot (P+Q)}, 0, 0, 0)^T. 
    \end{align*}
    Denote $M:=\Lambda_{\hat{\Lambda} (P+Q)} \hat{\Lambda} \Lambda_{P+Q}^{-1} = (M^i_j)_{i,j=0,1,2,3}$. It is obvious that $M$ is a Lorentz transformation. Here, $M^i (i=0,1,2,3)$ are used to denote the rows, and $M_j (j=0,1,2,3)$ are used to denote the columns of the matrix $M$. By linearity, we know that $M (1,0,0,0)^T = (1,0,0,0)^T$. Therefore, $M_0 = (1,0,0,0)^T$. From the orthogonal properties of the Lorentz transformation $M_i \cdot M_j =D_{ij}$, $i,j=0,1,2,3$, we know that $M^0 = (1,0,0,0)$, and
    \[
        M_{00} := 
            \begin{pmatrix}
                M^1_1 & M^1_2 & M^1_3 \\
                M^2_1 & M^2_2 & M^2_3 \\
                M^3_1 & M^3_2 & M^3_3 \\
            \end{pmatrix}
    \]
    is an orthogonal matrix. Therefore, $M (0,\omega)^T = (0,\tilde{\omega})^T$ for some $\tilde{\omega}\in \mathbb{S}^2$. 
\end{proof}

\begin{lemma}[\cite{Strain:applications}]\label{lem:Lorentz:inv}
    Suppose that a four-vector $P$ satisfies $p_0^2 = |p|^2 + \mathfrak{c}^2$, and $P' = \Lambda P$ for some Lorentz transformation $\Lambda$. Then $\frac{dp}{p_0} = \frac{dp'}{p'_0}$, namely, $\frac{dp}{p_0}$ is a Lorentz invariant measure. 
\end{lemma}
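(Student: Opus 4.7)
The plan is to represent $\frac{dp}{p_0}$ as the pushforward of a manifestly Lorentz-invariant object on the ambient four-space, namely the integration measure on the positive mass-shell $\{P\in\mathbb{R}^4 \mid P\cdot P = \mathfrak{c}^2,\ p_0>0\}$. Concretely, I would first establish the delta-function identity
\[
    \int_{\mathbb{R}^3} f(p)\frac{dp}{p_0} = 2 \int_{\mathbb{R}^4} f(p)\,\theta(p_0)\,\delta(P\cdot P - \mathfrak{c}^2)\,dp_0\,dp,
\]
valid for any reasonable test function $f$ on the mass shell. This follows from the one-variable identity $\delta(p_0^2 - |p|^2 - \mathfrak{c}^2) = \frac{1}{2p_0}\bigl[\delta(p_0 - \sqrt{|p|^2+\mathfrak{c}^2}) + \delta(p_0 + \sqrt{|p|^2+\mathfrak{c}^2})\bigr]$, restricted to $p_0>0$ by the step function $\theta(p_0)$, and then integrating out $p_0$.

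Next I would verify that each factor on the right-hand side is invariant under the change of variables $P \mapsto P' = \Lambda P$. The four-dimensional Lebesgue measure $dp_0\,dp$ is invariant because, taking determinants of the defining relation $\Lambda^{T} D \Lambda = D$, one obtains $(\det\Lambda)^2 = 1$, so $|\det\Lambda| = 1$. The Dirac factor $\delta(P\cdot P - \mathfrak{c}^2)$ is invariant precisely by the definition of a Lorentz transformation, since $P\cdot P = (\Lambda P)\cdot(\Lambda P)$. Finally, the orthochronous character of $\Lambda$ in the setting of this paper (where the statement $P' = \Lambda P$ is paired with $p_0'^{\,2} = |p'|^2 + \mathfrak{c}^2$ and the convention $p_0'>0$) guarantees that $\theta(p_0)$ is transported to $\theta(p_0')$. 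Combining these three invariances, the integral against any $f$ is unchanged when rewritten in the primed variables, and comparing with the delta-function identity applied in the primed frame yields
\[
    \int f(p)\,\frac{dp}{p_0} = \int f(p'(p))\,\frac{dp'}{p_0'},
\]
which is the desired equality of measures.

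The only subtle point, and thus the main (minor) obstacle, is the sign of $p_0$ under $\Lambda$: a general $\Lambda$ obeying $\Lambda^T D \Lambda = D$ need not preserve the forward light cone, so in principle $\theta(p_0)$ could be flipped to $\theta(-p_0')$. One must therefore argue that the Lorentz transformations appearing in the applications of this lemma (e.g., the boost-type matrices $\Lambda_U$ constructed in Remark~\ref{rmk:Lambda:U} and the composites in Lemma~\ref{lem:theta}) are orthochronous, equivalently that $\Lambda^0_{\,0} \ge 1$. This is automatic for the $\Lambda_U$ built from timelike $U$ with $u_0>0$, which covers every use of the lemma in the paper. Once this is observed, the remainder of the proof is immediate from the three invariance facts above.
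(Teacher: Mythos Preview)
The paper does not supply its own proof of this lemma; it is stated with a citation to \cite{Strain:applications} and left unproved. Your argument via the delta-function representation of the mass-shell measure is the standard one and is correct, including your observation about orthochronicity. There is nothing further to compare.
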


\begin{lemma}\label{lem:tildepq:equiv}
    Suppose that $P$ and $Q$ are four-vectors satisfying that $P \cdot P = Q \cdot Q = \mathfrak{c}^2$. Let $\Lambda$ be any Lorentz transformation, $\tilde{P} := \Lambda P$ and  $\tilde{Q} := \Lambda Q$. Then
    
    {\rm (i)} $|\tilde{p} - \tilde{q}| \sim |p - q|$, 

    {\rm (ii)} $\tilde{p}_0 \sim p_0 $, $\tilde{q}_0 \sim q_0$,

    Here the notation "$A \sim B$" means that there exist a constant $c>0$, depending only on $\Lambda$, such that $ A/c \le B \le c A$. 
\end{lemma}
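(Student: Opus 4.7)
The plan is to exploit two structural facts: any Lorentz transformation $\Lambda$ is a linear map with a fixed $4\times 4$ matrix of coefficients $\Lambda_{ij}$ depending only on $\Lambda$, and $\Lambda^{-1}$ is again a Lorentz transformation by Lemma \ref{lem:LT:mul}. Part (ii) is then an immediate entrywise bound, and part (i) follows by combining (ii) with the Lorentz invariance of the inner product $(P-Q)\cdot(P-Q)$.

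For (ii), I would expand
\[
    \tilde p_0 = \Lambda_{00}\,p_0 + \sum_{j=1}^3 \Lambda_{0j}\,p_j,
\]
and use $|p_j| \le |p| \le p_0$ to get $\tilde p_0 \le C\,p_0$ with $C := |\Lambda_{00}| + \sum_j |\Lambda_{0j}|$ depending only on $\Lambda$. Applying the same argument to $\Lambda^{-1}$ yields $p_0 \le C'\,\tilde p_0$; the bounds on $\tilde q_0$ are identical. Positivity of $\tilde p_0$ (needed for the equivalence to be meaningful) is automatic for the proper orthochronous Lorentz transformations used in the paper's applications, in particular the boosts $\Lambda_U$ of Remark \ref{rmk:Lambda:U}; in any case $\tilde p_0$ is normalized via $\tilde p_0 = \sqrt{|\tilde p|^2 + \mathfrak{c}^2}$.

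For (i), Lorentz invariance of the inner product gives
\[
    (p_0-q_0)^2 - |p-q|^2 \;=\; (\tilde p_0 - \tilde q_0)^2 - |\tilde p - \tilde q|^2,
\]
so it suffices to control $|\tilde p_0 - \tilde q_0|$ by $|p-q|$. The same entrywise decomposition applied to $P-Q$ gives
\[
    |\tilde p_0 - \tilde q_0| \;\le\; |\Lambda_{00}|\,|p_0-q_0| + \sum_{j=1}^3 |\Lambda_{0j}|\,|p_j-q_j|,
\]
and combined with the elementary estimate
\[
    |p_0 - q_0| \;=\; \frac{\bigl||p|-|q|\bigr|\,(|p|+|q|)}{p_0+q_0} \;\le\; \bigl||p|-|q|\bigr| \;\le\; |p-q|,
\]
this yields $|\tilde p_0 - \tilde q_0| \le C''\,|p-q|$, hence $|\tilde p - \tilde q|^2 \le (1+(C'')^2)\,|p-q|^2$. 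The reverse inequality follows by applying the same reasoning to $\Lambda^{-1}$.

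The lemma is essentially entrywise bookkeeping and presents no real obstacle. The only mildly delicate point is the positivity of $\tilde p_0, \tilde q_0$ required to make the equivalences meaningful, which is handled by the orthochronous convention implicit in the paper's use of Lorentz boosts.
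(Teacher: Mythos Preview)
Your proof is correct and follows essentially the same strategy as the paper: entrywise (block) bounds on the Lorentz matrix, the elementary inequality $|p_0-q_0|\le |p-q|$, and symmetry via $\Lambda^{-1}$ for the reverse estimates. The only cosmetic differences are that for (i) the paper bounds the spatial components $\tilde p-\tilde q=(p_0-q_0)a+\tilde\Lambda(p-q)$ directly rather than passing through the invariance of $(P-Q)\cdot(P-Q)$, and for (ii) the paper obtains the lower bound explicitly from the Lorentz relation $\lambda_{00}^2=|b|^2+1$ (giving the concrete constants $\tfrac{1}{2\lambda_{00}}\le \tilde p_0/p_0\le 2\lambda_{00}$) instead of appealing to $\Lambda^{-1}$; neither difference is substantive.
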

\begin{proof}
    (i) Denote $\Lambda$ as
    \[
        \Lambda = \begin{pmatrix}
            \lambda_{00} & b^T \\
            a & \tilde{\Lambda}
        \end{pmatrix}. 
    \]
    Without loss of generality, we assume $\lambda_{00}>0$. Since $\Lambda^{-1}$ is also a Lorentz transformation, 
    the four-vectors $\tilde{P}$ and $\tilde{Q}$ are equivalent to $P$ and $Q$. Obviously,
    $\tilde{p} = p_0 a + \tilde{\Lambda} p$, $\tilde{q} = q_0 a + \tilde{\Lambda} q$, and 
    \begin{equation}\label{eq:p-q:prime}
        |\tilde{p} - \tilde{q}| \le |p_0-q_0| a + |\tilde{\Lambda} (p-q)|. 
    \end{equation}

    Since 
    \[
        |p_0 - q_0| = |\sqrt{\mathfrak{c}^2 + |p|^2} - \sqrt{\mathfrak{c}^2 + |q|^2} | = \frac{ | p\cdot p - q \cdot q | }{\sqrt{\mathfrak{c}^2 + |p|^2} + \sqrt{\mathfrak{c}^2 + |q|^2}} \le | p - q |, 
    \]
    we have by \eqref{eq:p-q:prime} that 
    \[
        |\tilde{p} - \tilde{q}| \le c |p-q|,
    \]
    for some constant $c$ depending only on $\Lambda$. Note that $P,Q$ and $\tilde{P},\tilde{Q}$ are equivalent, hence $|\tilde{p} - \tilde{q}| \sim |p-q|$. 

    (ii) Note that $\tilde{p}_0 = \lambda_{00} p_0 + b \cdot p$, $\tilde{q}_0 = \lambda_{00} q_0 + b \cdot q$ and $\Lambda^i \cdot \Lambda^j = D_{ij}, (i,j=0,1,2,3)$, where $\Lambda^i$ denotes the rows of $\Lambda$  and $D_{ij}$ is given in \eqref{eq:D}. Therefore, we have that $\lambda_{00}^2 = b \cdot b + 1$ and 
    \[
        \tilde{p}_0 \le 2 \lambda_{00} p_0, \quad \tilde{q}_0 \le 2 \lambda_{00} q_0.
    \]
    On the other hand, we have that
    \begin{align*}
        \tilde{p}_0 & = \lambda_{00} p_0 + b \cdot p\\
        & = \lambda_{00} p_0 - |b| p_0 + |b| p_0 + b \cdot p\\
        & \ge (\lambda_{00} - |b|) p_0 + |b| p_0 - |b||p|\\
        & \ge \frac{\lambda_{00}^2-|b|^2}{\lambda_{00}+|b|} p_0 \ge \frac{1}{2 \lambda_{00}} p_0, 
    \end{align*}
    Thus 
    \begin{equation}\label{eq:tildep:equiv}
        \frac{1}{2 \lambda_{00}} p_0 \le \tilde{p}_0 \le 2 \lambda_{00} p_0,
    \end{equation}
    which implies that $\tilde{p}_0 \sim p_0$.
    Similarly, we have $\tilde{q}_0 \sim q_0$.
\end{proof}

\subsection{Moments of the relativistic Maxwellian}

The following lemma is well-known and can be found in some literature, see for example \cite{Bellouquid:Calvo:Nieto:Soler,Yang:Yu:Euler}. We present here a general version with light speed. 

\begin{lemma}\label{lem:moments}
    Let $J = J_{[1, u,T]}$, $\displaystyle z:=\frac{\mathfrak{c}^2}{T}$. Then
    \begin{enumerate}
        \item $ \displaystyle \int_{\mathbb{R}^3}\frac{J}{p_0}dp=\frac{K_1(z)}{ \mathfrak{c} K_2(z)}$, 
            
        \item $ \displaystyle \int_{\mathbb{R}^3} \frac{p_i}{p_0} J dp= \frac{u_i}{\mathfrak{c}},\quad i=0,1,2,3$, 
            
        \item $\displaystyle \int_{\mathbb{R}^3}p_0 Jdp= \frac{u_0^2}{\mathfrak{c}} (  \frac{K_1(z)}{K_2(z)} + \frac{3}{z} ) + \frac{1}{z} \frac{|u|^2}{\mathfrak{c}} = \frac{u_0^2}{\mathfrak{c}} \frac{K_3(z)}{K_2(z)} - \frac{\mathfrak{c}}{z} $, 
            
        \item $ \displaystyle \int_{\mathbb{R}^3}\frac{p_i^2}{p_0} J dp = \frac{u_i^2}{\mathfrak{c}}( \frac{K_1(z)}{K_2(z)} + \frac{4}{z} )+\frac{\mathfrak{c}}{z}=\frac{u_i^2}{\mathfrak{c}}\frac{K_3(z)}{K_2(z)}+\frac{\mathfrak{c}}{z},\mathbb \quad i=1,2,3$, 
            
        \item $ \displaystyle \int_{\mathbb{R}^3}p_i Jdp=\frac{u_0 u_i}{\mathfrak{c}}(  \frac{K_1(z)}{K_2(z)}  + \frac{4}{z} )=\frac{u_0u_i}{\mathfrak{c}}\frac{K_3(z)}{K_2(z)}, \quad i=1,2,3$, 
        
        \item $ \displaystyle \int_{\mathbb{R}^3} \frac{p_i p_j}{p_0} Jdp = \frac{u_i u_j}{\mathfrak{c}}(  \frac{K_1(z)}{K_2(z)} + \frac{4}{z} )=\frac{u_iu_j}{\mathfrak{c}}\frac{K_3(z)}{K_2(z)},\quad 0\le i \ne j\le 3$, 
        
        \item $ \displaystyle \int_{\mathbb{R}^3} p_0^2 J dp = \frac{u_0}{\mathfrak{c}}(u_0^2+\frac{1}{z}(3u_0^2+3 |u|^2)\frac{K_3(z)}{K_2(z)})$, 
        
        \item $ \displaystyle \int_{\mathbb{R}^3} p_i^3 J \frac{dp}{p_0} = \frac{u_i}{\mathfrak{c}}(u_i^2+\frac{1}{z}(6u_i^2+3c^2)\frac{K_3(z)}{K_2(z)}), \quad i=1,2,3$, 
        
        \item $ \displaystyle \int_{\mathbb{R}^3} p_0 p_i J dp = \frac{u_i}{\mathfrak{c}}(u_0^2+\frac{1}{z}(6u_0^2-\mathfrak{c}^2)\frac{K_3(z)}{K_2(z)}),\quad i=1,2,3$, 
        
        \item $ \displaystyle \int_{\mathbb{R}^3}  p_i^2 J dp = \frac{u_0}{\mathfrak{c}}(u_i^2+\frac{1}{z}(6u_i^2+\mathfrak{c}^2)\frac{K_3(z)}{K_2(z)}),\quad i=1,2,3$, 
        
        \item $ \displaystyle \int_{\mathbb{R}^3}  \frac{p_i^2 p_j}{p_0} J dp = \frac{u_j}{\mathfrak{c}}(u_i^2+\frac{1}{z}(6u_i^2+\mathfrak{c}^2)\frac{K_3(z)}{K_2(z)}),\quad  1\le i\ne j \le 3$, 
        
        \item $ \displaystyle \int_{\mathbb{R}^3}  p_i p_j J dp = \frac{u_0u_iu_j}{\mathfrak{c}^3}(\mathfrak{c}^2+6T\frac{K_3(z)}{K_2(z)}),\quad 1\le i\ne j\le 3$, 
        
        \item $ \displaystyle \int_{\mathbb{R}^3}  \frac{p_1 p_2 p_3}{p_0} J dp = \frac{u_1u_2u_3}{\mathfrak{c}^3}(\mathfrak{c}^2+6T\frac{K_3(z)}{K_2(z)})$, 
    \end{enumerate}
    where $K_\alpha(z)$ is the modified Bessel function defined by
    \[
        K_\alpha(z) := \int_{0}^{\infty} e^{- z \cosh t }\cosh \alpha t dt, \qquad \mathrm{Re} z>0,\qquad \alpha\in\mathbb{R}.  
    \]
\end{lemma}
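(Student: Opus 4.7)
The plan is to exploit the Lorentz covariance of the moments. The measure $dp/p_0$ is Lorentz-invariant by Lemma \ref{lem:Lorentz:inv}, and $J_{[1,u,T]}$ depends on $p$ only through the scalar $P\cdot U$. Therefore, for each $n \ge 0$, the rank-$n$ object
\[
    T^{\mu_1\cdots\mu_n} := \int_{\mathbb{R}^3} \frac{P^{\mu_1} \cdots P^{\mu_n}}{p_0}\, J \, dp
\]
transforms as a Lorentz tensor. Identity (1) is a Lorentz scalar, identity (2) is the vector $T^\mu$, identities (3)--(6) are components of $T^{\mu\nu}$, and identities (7)--(13) are components of $T^{\mu\nu\rho}$.

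The first step is to compute everything in the rest frame $u = 0$, $u_0 = \mathfrak{c}$, where $J_{[1,0,T]}(p) = \frac{z}{4\pi \mathfrak{c}^3 K_2(z)} e^{-z p_0 / \mathfrak{c}}$. Using the parametrization $p_0 = \mathfrak{c}\cosh\theta$, $|p| = \mathfrak{c}\sinh\theta$, with $dp = \mathfrak{c}^3 \sinh^2\theta \cosh\theta \, d\theta \, d\omega$, all required scalar integrals reduce to
\[
    \int_0^\infty e^{-z\cosh\theta} \sinh^{2\nu}\theta \, d\theta = \frac{\Gamma(\nu + 1/2)}{\sqrt{\pi}\,(z/2)^\nu}\, K_\nu(z),
\]
which is the standard integral representation of the modified Bessel function. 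Rotational symmetry of $J_{[1,0,T]}$ annihilates every component with a spatial index appearing an odd number of times, so in the rest frame only $T^0$, $T^{00}$, $T^{ii}$, $T^{000}$, and $T^{0ii}$ need to be evaluated explicitly (all $ii$ components being equal by isotropy).

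The second step is to invoke the representation of Lorentz-covariant symmetric tensors built from the single four-vector $U$ and the Minkowski metric $D$ of \eqref{eq:D}, which forces
\[
    T^\mu = C\,U^\mu, \qquad T^{\mu\nu} = A\,U^\mu U^\nu + B\,D^{\mu\nu},
\]
\[
    T^{\mu\nu\rho} = A' U^\mu U^\nu U^\rho + B'\bigl(U^\mu D^{\nu\rho} + U^\nu D^{\mu\rho} + U^\rho D^{\mu\nu}\bigr),
\]
with scalar coefficients $C, A, B, A', B'$ depending only on $z$ and $\mathfrak{c}$. Matching both sides in the rest frame determines all coefficients in closed form via the Bessel recurrence $K_{n+1}(z) - K_{n-1}(z) = (2n/z)K_n(z)$; in particular $K_3/K_2 = K_1/K_2 + 4/z$, which accounts for the two equivalent expressions appearing in identities (3) and (4).

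Finally, each of the thirteen identities is read off by evaluating the relevant tensor component at general $U = (u_0, u_1, u_2, u_3)^T$, inserting $D^{00}=1$, $D^{ii}=-1$, $D^{\mu\nu}=0$ otherwise, and substituting $|u|^2 = u_0^2 - \mathfrak{c}^2$ where convenient. The main obstacle is purely organizational: enumerating the rest-frame integrals without sign errors, correctly identifying the scalar coefficients in each decomposition, and matching each specific general-frame component to the precise algebraic form stated in the lemma. No new analytic estimate is required beyond the Bessel-function integral representation and recurrence.
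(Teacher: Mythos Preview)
Your proposal is correct and follows essentially the same route the paper indicates: the paper's proof is omitted, stating only that the lemma follows from Lorentz transformation, the definition of the modified Bessel function, and the recurrence $K_{\alpha+1}(z) = (2\alpha/z)K_\alpha(z) + K_{\alpha-1}(z)$, which are precisely the three ingredients you invoke. Your tensor-decomposition organization (writing $T^{\mu_1\cdots\mu_n}$ in terms of $U$ and $D$, fixing the scalar coefficients in the rest frame) is the standard systematic way to implement that outline.
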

The lemma can be proved by using Lorentz transformation, the definition of the modified Bessel function, and the following lemma. We omit details for brevity. 
\begin{lemma}[\cite{Abramowitz:Stegun}]
    For any $\alpha\in\mathbb{R}$, it holds that
    \[
        K_{\alpha + 1}(z) = \frac{2\alpha}{z} K_\alpha(z) + K_{\alpha - 1}(z). 
    \]
\end{lemma}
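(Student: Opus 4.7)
The plan is to derive the recurrence directly from the integral representation by a single integration by parts, combined with a product-to-sum identity for hyperbolic functions. No deeper theory of Bessel functions is needed; everything reduces to elementary manipulations together with convergence estimates that exploit the double-exponential decay of $e^{-z\cosh t}$.

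First I would introduce the auxiliary function $\phi(t) := e^{-z\cosh t}\sinh(\alpha t)$. Note that $\phi(0)=0$, and for $\mathrm{Re}\, z>0$ the factor $e^{-z\cosh t}$ decays like $\exp(-\tfrac{1}{2}(\mathrm{Re}\,z)\, e^{t})$ as $t\to\infty$, which dominates the growth $|\sinh(\alpha t)|\lesssim e^{|\alpha| t}$, so $\phi(\infty)=0$ and the same double-exponential decay justifies differentiation under the integral sign and the convergence of every integral below. Differentiating gives
\[
\phi'(t) = \alpha\, e^{-z\cosh t}\cosh(\alpha t) - z\sinh t\, e^{-z\cosh t}\sinh(\alpha t),
\]
so integrating from $0$ to $\infty$ and using $\phi(\infty)-\phi(0)=0$ yields
\[
\alpha K_\alpha(z) = z\int_0^\infty \sinh t\,\sinh(\alpha t)\, e^{-z\cosh t}\, dt.
\]

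Second, I would apply the product-to-sum identity
\[
2\sinh t\,\sinh(\alpha t) = \cosh\!\big((\alpha+1)t\big) - \cosh\!\big((\alpha-1)t\big),
\]
which is the $A=\alpha t$, $B=t$ case of $\cosh(A+B)-\cosh(A-B)=2\sinh A\sinh B$. Substituting this into the previous display and recognizing the resulting integrals as $K_{\alpha+1}(z)$ and $K_{\alpha-1}(z)$ (using the definition stated in the paper), I obtain
\[
\alpha K_\alpha(z) = \frac{z}{2}\bigl[K_{\alpha+1}(z) - K_{\alpha-1}(z)\bigr],
\]
which rearranges to
\[
K_{\alpha+1}(z) = \frac{2\alpha}{z}K_\alpha(z) + K_{\alpha-1}(z),
\]
as claimed.

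The only real subtlety, and therefore the main (though mild) obstacle, is the convergence and boundary-term bookkeeping at $t=\infty$: one must confirm that $\phi(t)\to 0$ and that the two integrals generated by the product-to-sum identity converge separately, both of which follow from the double-exponential decay noted above for any $\alpha\in\mathbb{R}$ and any $z$ with $\mathrm{Re}\,z>0$. Since each side of the identity is analytic in $z$ on this half-plane, the recurrence for real $z>0$ (the case actually used in the paper) follows by restriction.
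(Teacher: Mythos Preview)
Your proof is correct. The paper does not prove this lemma at all; it simply quotes the recurrence from \cite{Abramowitz:Stegun} as a standard fact about modified Bessel functions. Your integration-by-parts argument with the auxiliary function $\phi(t)=e^{-z\cosh t}\sinh(\alpha t)$, followed by the product-to-sum identity for $\sinh t\,\sinh(\alpha t)$, is the classical derivation and is valid exactly as you wrote it, including the convergence and boundary-term justifications.
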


\bibliographystyle{plain}
\bibliography{refs.bib}

\begin{thebibliography}{10}

\bibitem{Abramowitz:Stegun}
Milton Abramowitz and Irene~A. Stegun.
\newblock {\em Handbook of mathematical functions with formulas, graphs, and
  mathematical tables}, volume No. 55 of {\em National Bureau of Standards
  Applied Mathematics Series}.
\newblock U. S. Government Printing Office, Washington, DC, 1964.
\newblock For sale by the Superintendent of Documents.

\bibitem{Bellouquid:Calvo:Nieto:Soler}
Abdelghani Bellouquid, Juan Calvo, Juanjo Nieto, and Juan Soler.
\newblock On the relativistic {BGK}-{B}oltzmann model: asymptotics and
  hydrodynamics.
\newblock {\em J. Stat. Phys.}, 149(2):284--316, 2012.

\bibitem{Boisseau:vanLeeuwen}
B.~Boisseau and W.~A. van Leeuwen.
\newblock Relativistic {B}oltzmann theory in {$D+1$} spacetime dimensions.
\newblock {\em Ann. Physics}, 195(2):376--419, 1989.

\bibitem{Caflisch:I:soft}
Russel~E. Caflisch.
\newblock The {B}oltzmann equation with a soft potential. {I}. {L}inear,
  spatially-homogeneous.
\newblock {\em Comm. Math. Phys.}, 74(1):71--95, 1980.

\bibitem{Caflisch:II:soft}
Russel~E. Caflisch.
\newblock The {B}oltzmann equation with a soft potential. {II}. {N}onlinear,
  spatially-periodic.
\newblock {\em Comm. Math. Phys.}, 74(2):97--109, 1980.

\bibitem{Cercignani:Kremer}
Carlo Cercignani and Gilberto~M Kremer.
\newblock {\em The relativistic Boltzmann equation: theory and applications},
  volume~22.
\newblock Springer Science \& Business Media, 2002.

\bibitem{Chapman:Jang:Strain}
James Chapman, Jin~Woo Jang, and Robert~M. Strain.
\newblock On the determinant problem for the relativistic {B}oltzmann equation.
\newblock {\em Comm. Math. Phys.}, 384(3):1913--1943, 2021.

\bibitem{Chen:Liu:Yang}
Chiun-Chuan Chen, Tai-Ping Liu, and Tong Yang.
\newblock Existence of boundary layer solutions to the {B}oltzmann equation.
\newblock {\em Anal. Appl. (Singap.)}, 2(4):337--363, 2004.

\bibitem{deGroot:vanLeeuwen:vanWeert}
S.~R. de~Groot, W.~A. van Leeuwen, and Ch.~G. van Weert.
\newblock {\em Relativistic kinetic theory}.
\newblock North-Holland Publishing Co., Amsterdam-New York, 1980.
\newblock Principles and applications.

\bibitem{Duan:Yu}
Renjun Duan and Hongjun Yu.
\newblock The relativistic {B}oltzmann equation for soft potentials.
\newblock {\em Adv. Math.}, 312:315--373, 2017.

\bibitem{Marek:Maria}
Marek Dudy\'{n}ski and Maria~L. Ekiel-Je\.{z}ewska.
\newblock On the linearized relativistic {B}oltzmann equation. {I}. {E}xistence
  of solutions.
\newblock {\em Comm. Math. Phys.}, 115(4):607--629, 1988.

\bibitem{Glassey}
Robert~T. Glassey.
\newblock {\em The {C}auchy problem in kinetic theory}.
\newblock Society for Industrial and Applied Mathematics (SIAM), Philadelphia,
  PA, 1996.

\bibitem{Glassey:Cauchy}
Robert~T. Glassey.
\newblock Global solutions to the {C}auchy problem for the relativistic
  {B}oltzmann equation with near-vacuum data.
\newblock {\em Comm. Math. Phys.}, 264(3):705--724, 2006.

\bibitem{Glassey:Strauss}
Robert~T. Glassey and Walter~A. Strauss.
\newblock Asymptotic stability of the relativistic {M}axwellian.
\newblock {\em Publ. Res. Inst. Math. Sci.}, 29(2):301--347, 1993.

\bibitem{Guichelaar}
Jan Guichelaar.
\newblock Relativistic theory of sound propagation.
\newblock Technical report, Amsterdam Univ.(Netherlands), 1974.

\bibitem{Jang}
Jin~Woo Jang.
\newblock {\em Global classical solutions to the relativistic {B}oltzmann
  equation without angular cut-off}.
\newblock ProQuest LLC, Ann Arbor, MI, 2016.
\newblock Thesis (Ph.D.)--University of Pennsylvania.

\bibitem{Jang:Strain}
Jin~Woo Jang and Robert~M. Strain.
\newblock Asymptotic stability of the relativistic {B}oltzmann equation without
  angular cut-off.
\newblock {\em Ann. PDE}, 8(2):Paper No. 20, 167, 2022.

\bibitem{Speck:Strain}
Jared Speck and Robert~M. Strain.
\newblock Hilbert expansion from the {B}oltzmann equation to relativistic
  fluids.
\newblock {\em Comm. Math. Phys.}, 304(1):229--280, 2011.

\bibitem{Stewart}
John~M Stewart.
\newblock Non-equilibrium relativistic kinetic theory.
\newblock {\em Non-equilibrium relativistic kinetic theory}, pages 1--113,
  2005.

\bibitem{Strain:applications}
{Mills, III, Robert} Strain.
\newblock {\em Some applications of an energy method in collisional kinetic
  theory}.
\newblock ProQuest LLC, Ann Arbor, MI, 2005.
\newblock Thesis (Ph.D.)--Brown University.

\bibitem{Strain:stability}
Robert~M. Strain.
\newblock Asymptotic stability of the relativistic {B}oltzmann equation for the
  soft potentials.
\newblock {\em Comm. Math. Phys.}, 300(2):529--597, 2010.

\bibitem{Strain:Guo:Exponential}
Robert~M. Strain and Yan Guo.
\newblock Exponential decay for soft potentials near {M}axwellian.
\newblock {\em Arch. Ration. Mech. Anal.}, 187(2):287--339, 2008.

\bibitem{Strain:Zhu}
Robert~M. Strain and Keya Zhu.
\newblock Large-time decay of the soft potential relativistic {B}oltzmann
  equation in {$\mathbb {R}^3_x$}.
\newblock {\em Kinet. Relat. Models}, 5(2):383--415, 2012.

\bibitem{Sun:Tian:softpotential}
Jie Sun and Qianzhu Tian.
\newblock The nonlinear boundary layer to the {B}oltzmann equation for cutoff
  soft potential with physical boundary condition.
\newblock {\em Nonlinear Anal. Real World Appl.}, 12(6):3207--3223, 2011.

\bibitem{Sun:Tian:hardpotential}
Jie Sun and Qianzhu Tian.
\newblock The nonlinear boundary layer to the {B}oltzmann equation with mixed
  boundary conditions for hard potentials.
\newblock {\em J. Math. Anal. Appl.}, 375(2):725--737, 2011.

\bibitem{Tian:physical}
Qianzhu Tian.
\newblock Existence of nonlinear boundary layer solution to the {B}oltzmann
  equation with physical boundary conditions.
\newblock {\em J. Math. Anal. Appl.}, 356(1):42--59, 2009.

\bibitem{Tian:reverse}
Qianzhu Tian.
\newblock Existence of nonlinear boundary layer to the {B}oltzmann equation
  with reverse reflection boundary condition.
\newblock {\em Nonlinear Anal.}, 71(12):6232--6243, 2009.

\bibitem{Tian:Sun:stability:diffusive}
Qianzhu Tian and Jie Sun.
\newblock Nonlinear stability of boundary layer solution to the {B}oltzmann
  equation with diffusive effect at the boundary.
\newblock {\em J. Math. Phys.}, 50(10):103303, 14, 2009.

\bibitem{Ukai:Yang:Yu}
Seiji Ukai, Tong Yang, and Shih-Hsien Yu.
\newblock Nonlinear boundary layers of the {B}oltzmann equation. {I}.
  {E}xistence.
\newblock {\em Comm. Math. Phys.}, 236(3):373--393, 2003.

\bibitem{Ukai:Yang:Yu:stability}
Seiji Ukai, Tong Yang, and Shih-Hsien Yu.
\newblock Nonlinear stability of boundary layers of the {B}oltzmann equation.
  {I}. {T}he case {$\mathbb{M}^\infty<-1$}.
\newblock {\em Comm. Math. Phys.}, 244(1):99--109, 2004.

\bibitem{Wang:Yang:Yang:stability:hard}
Weike Wang, Tong Yang, and Xiongfeng Yang.
\newblock Nonlinear stability of boundary layers of the {B}oltzmann equation
  for cutoff hard potentials.
\newblock {\em J. Math. Phys.}, 47(8):083301, 15, 2006.

\bibitem{Wang:Yang:Yang}
Weike Wang, Tong Yang, and Xiongfeng Yang.
\newblock Existence of boundary layers to the {B}oltzmann equation with cutoff
  soft potentials.
\newblock {\em J. Math. Phys.}, 48(7):073304, 21, 2007.

\bibitem{Wang}
Yong Wang.
\newblock Global well-posedness of the relativistic {B}oltzmann equation.
\newblock {\em SIAM J. Math. Anal.}, 50(5):5637--5694, 2018.

\bibitem{Yang:Yu:Euler}
Dongcheng Yang and Hongjun Yu.
\newblock The {E:}uler limit of the relativistic {B}oltzmann equation.
\newblock {\em Commun. Math. Anal. Appl.}, 2(2):142--220, 2023.

\bibitem{Yang:Zhao:stability:specular}
Tong Yang and Hui-Jiang Zhao.
\newblock A half-space problem for the {B}oltzmann equation with specular
  reflection boundary condition.
\newblock {\em Comm. Math. Phys.}, 255(3):683--726, 2005.

\bibitem{Yang:stability:soft}
Xiongfeng Yang.
\newblock Nonlinear stability of boundary layers for the {B}oltzmann equation
  with cutoff soft potentials.
\newblock {\em J. Math. Anal. Appl.}, 345(2):941--953, 2008.

\end{thebibliography}

\end{document}